\numberwithin{equation}{section}
\theoremstyle{plain}
\newtheorem{thm}{Theorem}
\newtheorem{lem}{Lemma}
\newtheorem{cor}{Corollary}
\newtheorem{clm}{Claim}
\newtheorem{prop}{Proposition}\makeatletter
\theoremstyle{remark}
\newtheorem{dfn}{Definition}[section]
\newtheorem{rem}[dfn]{Remark}
\newcommand{\red}[1]{{\color{red}#1}}
\newcommand{\blue}[1]{{\color{blue}#1}}
\newcommand{\ca}{{\rm Cap}}
\newcommand{\Z}{{\mathbb Z}}
\newcommand{\N}{{\mathbb N}}
\newcommand{\D}{{\mathcal D}}
\begin{document}

\begin{frontmatter}
\title{Moderate Deviations for the Capacity of the Random Walk range in dimension four}
\runtitle{Deviations for the Capacity of the Range of a Random Walk}

\begin{aug}
\author[A]{\fnms{Arka}~\snm{Adhikari}\ead[label=e1]{arkaa@stanford.edu}},
\author[B]{\fnms{Izumi}~\snm{Okada}\ead[label=e2]{iokada@math.s.chiba-u.ac.jp}}
\address[A]{Department of Mathematics, Stanford University, Stanford, CA, USA \printead[presep={,\ }]{e1}}

\address[B]{Department of Mathematics and Informatics, Faculty of Science, Chiba University, Chiba 263- 8522, Japan\printead[presep={,\ }]{e2}}
\end{aug}

\begin{abstract}
In this paper, we find a natural four dimensional analog of the moderate deviation results for the capacity of the random walk, which corresponds to Bass, Chen and Rosen \cite{BCR} concerning the volume of the random walk range for $d=2$. We find that the deviation statistics of the capacity of the random walk can be related to the following constant of generalized Gagliardo-Nirenberg inequalities,
\begin{equation*} \label{eq:maxineq}
\inf_{f: \|\nabla f\|_{L^2}<\infty} \frac{\|f\|^{1/2}_{L^2} \|\nabla f\|^{1/2}_{L^2}}{ [\int_{(\mathbb{R}^4)^2} f^2(x) G(x-y) f^2(y) \text{d}x \text{d}y]^{1/4}}.
\end{equation*}
\end{abstract}

\begin{keyword}[class=MSC]
\kwd[Primary ]{60F15}
\kwd[; secondary ]{60G50}
\end{keyword}

\begin{keyword}
\kwd{moderate deviation}
\kwd{random walk}
\kwd{Brownian motion}
\kwd{capacity}
\end{keyword}

\end{frontmatter}
\tableofcontents

\section{Introduction}
In this paper, we study the moderate deviation results for the capacity of the random walk for $d=4$. 
Given an arbitrary set $A$ in $\mathbb{Z}^d$, the capacity of $A$ is defined as follows:
let $\tau_A$ denote the first positive hitting time of a finite set $A$
by a simple random walk $(\mathcal{S}_m)_{m \ge 0}$ on $\mathbb{Z}^d$ and
recall that the corresponding (Newtonian) capacity 
is given for $d \ge 3$, by
\begin{align*}
\ca (A)
:= \sum_{x \in A} 
P^x(\tau_A =\infty)
=\lim_{\|z\|\to \infty}\frac{P^z(\tau_A <\infty)}{G_D(z)}.  
\end{align*}
Here, $G_D$ is the Green's function for the random walk on the lattice. 
$\| \cdot \|$ denotes the  Euclidean distance.

There has been much significant interest in studying the capacity of the range of random walk in $d$-dimensions. As revealed in many other works, understanding the capacity of the range of the random walk relates to questions regarding the volume of a random walk or the intersection of two random walks. This, in turn, has a multitude of applications in various fields. For instance, random walk intersection estimates appear in the study  of quantum field theories \cite{Scy}, conformal field theories \cite{Dup}, and in the study of the self-avoiding walk \cite{Brydge}. {For a more} detailed discussion, one can see the references in \cite{As3}.

In this direction, there are many works {in the mathematical literature} studying the capacity. 
Let $\mathcal{S}[1,n]:=\{\mathcal{S}_1,\ldots,\mathcal{S}_n\}$. 
Jain and Orey \cite{JO68} proved a strong law of large numbers, that is, almost surely, 
$$
\lim_{n \to \infty} \frac{\ca(\mathcal{S}[1,n])}{n} = \alpha_d,  \quad \text{for }d\ge 3
$$
for some constant $\alpha_d$ depending on the dimension. 
If one defines Brownian capacity as,
$$
\text{Cap}_B(D)
:= \bigg(\inf\left\{  \iint G(x-y) \mu(\text{d}x) \mu(\text{d}y): \mu(D) =1\right\} \bigg)^{-1},
$$
and $G$ is the Green's function for the Brownian motion,
then, when $d=3$, Chang \cite{Ch17} has shown that 
$$
\frac{\ca(\mathcal{S}[1,n])}{\sqrt{n}} \stackrel{\D}{\Longrightarrow} \frac{1}{3\sqrt{3}}\text{Cap}_B(B[0,1]).
$$
Here, $B[0,1]$ is the image of the Brownian motion from time $0$ to $1$.

{In addition,} the paper \cite{As1} provides lower and upper bounds for the large deviation of the capacity of the range of a random walk in various dimensions, though without obtaining the optimal constant. The works \cite{As3,As5} also established a law of large numbers and a central limit theorem for the capacity of the range of a random walk in $\mathbb{Z}^4$.  
As a consequence of these results, one conjectures a curious link between the behavior of the capacity in $d$ dimensions and the self-intersection of random walks in $d-2$ dimensions. 

One can observe some of these links when looking at Central Limit Theorem type behavior for the volume of the range of a random walk in two dimensions and the capacity of a walk in four dimensions.
For example, Le Gall, J-F. \cite{LG86} showed that for $d=2$, 
\begin{align*}
    \frac{(\log n)^2}{n} \{\text{Vol}(\mathcal{S}[1, n])- \mathbb{E}\text{Vol}(\mathcal{S}[1, n])\}  \stackrel{\D}{\Longrightarrow} -\pi^2 \gamma([0, 1] ),
\end{align*}
where it is formally defined by understanding a corresponding quantity for Brownian motions: 
\begin{align*}
    \gamma([0, 1])
    :=\int_0^1 \int_0^1 \delta_0(B_s-B_r) \text{d}s\text{d}r
    - \mathbb{E}\left[\int_0^1 \int_0^1 \delta_0(B_s-B_r) \text{d}s\text{d}r\right]. 
\end{align*}
By looking at the form of this equation, it is quite similar to the result of  
Asselah et al. for the central limit theorem behavior of the capacity of a random walk in four dimension. \cite{As5} showed that for $d=4$, 
\begin{align*}
    \frac{(\log n)^2}{n} \{\ca(\mathcal{S}[1, n])- \mathbb{E}\ca(\mathcal{S}[1, n])\}  \stackrel{\D}{\Longrightarrow} -\frac{\pi^2}{4} \gamma_G([0, 1] ),
\end{align*}
where it is also formally defined by looking at a corresponding quantity for Brownian motions: 
\begin{align*}
    \gamma_G([0, 1])
    :=\int_0^1 \int_0^1 G(B_s-B_r) \text{d}s\text{d}r
    - \mathbb{E}\left[\int_0^1 \int_0^1 G(B_s-B_r) \text{d}s\text{d}r\right]. 
\end{align*}

However, as of yet, no deeper mechanism found to explain these parallels. 

More recently, Dembo and the second author \cite{DO} found such a parallel when they wanted  to understand  the more detailed 
 question of a law of iterated logarithms for the capacity.
In four dimensions, the main result of \cite{DO} was the following. 
Then, the following estimates were shown, almost surely, 
\begin{align*}
    &\limsup_{n \to \infty} \frac{\ca(\mathcal{S}[1,n]) - \mathbb{E}[\ca(\mathcal{S}[1,n])]}{\frac{\pi^2}{8} \frac{n \log(\log(\log n))}{(\log n)^2}} = 1, \\
    &\liminf_{n \to \infty} \frac{\ca(\mathcal{S}[1,n]) - \mathbb{E}[\ca(\mathcal{S}[1,n])]}{c_* \frac{n \log(\log n)}{(\log n)^2}} =-1,
\end{align*}
for some constant $c_*>0$. 
Via subadditivity arguments, the upper tail of the law of iterated logarithms can reduce to the computation of an explicit limit. 

By contrast, the constant associated with the lower tail of the large deviation is a far more delicate question. In \cite{DO}, it was only shown that the $\liminf$ exists; the value of the constant depends on quite precise large deviation statistics of the capacity.  However, rather than being merely a technical question, the exact value of the constant can reveal deep connections to other fields. 

 Indeed, much like how Chen et al \cite{Chen05, BCR} showed that the precise value of the large deviation constant for the intersection of random walks was related to the Gagliardo-Nirenberg inequality, we demonstrate here that the constant for the large deviation of the lower tail of the capacity of the random walk range is related to the generalized Gagliardo-Nirenberg inequality. This generalized Gagliardo-Nirenberg inequality was key in the study of the polaron and many other physical processes of interest \cite{FY15, MS}. 
 If we look at  \cite[Theorem 2.3]{FY15}, this inequality is derived from the Hardy-Littlewood-Sobolev inequality and is used to study the Hartree equation.  
Hence, we find a new relationship between the capacity of the random walk and the field of analysis. 
Furthermore, the value of the large deviation constant for the capacity of the random walk range should give great information on the corresponding large deviation statistics of the capacity of the Wiener sausage.

\subsection{Main results}
In our main result, we find that the moderate deviation of $\ca(\mathcal{S}[1,n])$ for $d=4$ is related to best constant of the generalized Gagliardo-Nirenberg inequality (see \cite[(6)]{FY15}).  
 Namely, it is the smallest constant $\tilde{\kappa}(4,2)$ such that the following inequality should hold among $g$ with $\|\nabla g\|_{L^2}<\infty$: 
\begin{align*}
\left[ \int_{(\mathbb{R}^4)^2} g^2(x) G(x-y)g^2(y) \text{d}x \text{d}y \right]^{1/4} \le \tilde{\kappa}(4,2) \|g\|^{1/2}_{L^2} \|\nabla g\|^{1/2}_{L^2},
\end{align*}
where $G(x-y)=2^{-1}\pi^{-2}\|x-y\|^{-2}$ for $d=4$. 

\begin{thm}\label{m1}
Assume $b_n \to \infty$ and $b_n=O(\log \log n)$. 
For $d=4$ and $\lambda>0$, 
\begin{align*}
\lim_{n\to \infty}
\frac{1}{b_n} \log 
\mathbb{P}\left( \ca(\mathcal{S}[1,n]) - \mathbb{E}[\ca(\mathcal{S}[1,n])] \le -\frac{\lambda n}{(\log n)^2}b_n \right)=-I_4(\lambda),
\end{align*}
where 
\begin{align*}
I_4(\lambda)
=\frac{2}{\pi^4} \tilde{\kappa}(4,2)^{-4} \lambda.
\end{align*}
\end{thm}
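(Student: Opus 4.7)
The plan is to mirror the Bass--Chen--Rosen approach from the two-dimensional volume setting, transported to the four-dimensional capacity problem via the correspondence already noted in the introduction between $\mathrm{Vol}(\mathcal{S}[1,n])$ in $d=2$ and $\ca(\mathcal{S}[1,n])$ in $d=4$. The first step is a \emph{reduction} of the centered capacity to a Green's function self-intersection functional. Building on the CLT-level decomposition of \cite{As5}, write
\[
\ca(\mathcal{S}[1,n]) - \mathbb{E}[\ca(\mathcal{S}[1,n])] \;=\; -\frac{\pi^2}{4}\,\overline{\gamma}_n \;+\; \mathcal{E}_n,
\]
where $\overline{\gamma}_n$ is the discrete-walk analogue of $\gamma_G([0,1])$, essentially a centered version of $\sum_{1 \le i < j \le n} G_D(\mathcal{S}_j - \mathcal{S}_i)$, and $\mathcal{E}_n$ is a remainder. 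Proving the theorem is then equivalent to establishing the moderate deviation
\[
\lim_{n \to \infty} \frac{1}{b_n}\log \mathbb{P}\!\left(\overline{\gamma}_n \ge \frac{4\lambda\,n\,b_n}{\pi^2 (\log n)^2}\right) = -I_4(\lambda),
\]
provided $\mathcal{E}_n$ is negligible at the exponential scale $e^{-b_n}$.

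For the \textbf{upper bound}, I would compute exponential moments $\mathbb{E}\bigl[\exp(\theta\,(\log n)^{2}\,\overline{\gamma}_n/n)\bigr]$ by expanding $\overline{\gamma}_n^{\,k}$ into an iterated Green's-function sum over ordered time tuples, then isolating the dominant combinatorial contribution. After rescaling, these moments should behave like $k!\cdot (C\,\tilde\kappa(4,2)^{4})^k$, giving an exponential moment that is finite precisely up to a critical threshold $\theta_c \propto \tilde\kappa(4,2)^{-4}$. The Gagliardo--Nirenberg inequality enters here in controlling the test-function integrals $\int \phi^{2} G \phi^{2}$ that arise from the leading combinatorics, via the sharp ratio with $\|\phi\|_{L^2}^{1/2}\|\nabla \phi\|_{L^2}^{1/2}$. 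A Chebyshev bound at the critical $\theta$ then produces the linear rate $I_4(\lambda)=\frac{2}{\pi^4}\tilde\kappa(4,2)^{-4}\lambda$; the linearity reflects the fact that $\gamma_G([0,1])$ has genuine exponential tails whose rate is determined by the Gagliardo--Nirenberg constant.

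For the \textbf{lower bound}, the strategy is a change-of-measure / tilting argument. Given an almost-extremal $g$ for Gagliardo--Nirenberg with $\|g\|_{L^2}=1$, construct a tilted law of the walk whose rescaled empirical density mimics $g^{2}$ on the appropriate diffusive scale. Under the tilt, $\overline{\gamma}_n$ is of order $n\,(\log n)^{-2}\,\int g^{2}\,G\,g^{2}$, the Radon--Nikodym cost is $\exp(-b_n\,\|\nabla g\|_{L^2}^{2}\cdot \text{scaling})$, and optimization over $g$ (or equivalently over the two sides of the Gagliardo--Nirenberg ratio, exploiting scale invariance) recovers the same constant $I_4(\lambda)$, proving sharpness.

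The \textbf{main obstacle} is the first step: upgrading the reduction $\ca - \mathbb{E}[\ca] \approx -\tfrac{\pi^2}{4}\overline{\gamma}_n$ from its $L^2$ form in \cite{As5} to an exponential-moment statement. The CLT-level control is far too weak; the error term $\mathcal{E}_n$ must be shown to be negligible at the scale $e^{-b_n}$, which requires a finer decomposition of $\ca(\mathcal{S}[1,n])$ into a hierarchy of non-intersection and hitting-probability corrections (in the style of Lawler-type expansions), together with exponential-moment bounds on each contribution. A secondary challenge is sharpness: the upper bound needs the leading-order moment combinatorics to be matched exactly by the Gagliardo--Nirenberg extremizer, and the lower bound needs the tilt to be calibrated to the same extremizer, so that the two constants coincide.
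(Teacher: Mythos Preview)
Your overall architecture --- reduce the centered capacity to a self-intersection-type functional, then get matching upper and lower bounds via exponential moments and a Feynman--Kac/tilting argument calibrated to the Gagliardo--Nirenberg extremizer --- is the right shape. But the specific reduction you propose is exactly the one the paper identifies as the approach that \emph{fails} at the moderate-deviation scale.

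Your decomposition $\ca(\mathcal{S}[1,n]) - \mathbb{E}[\ca(\mathcal{S}[1,n])] = -\tfrac{\pi^2}{4}\overline{\gamma}_n + \mathcal{E}_n$, with $\overline{\gamma}_n$ a centered version of $\sum_{i<j} G_D(\mathcal{S}_j-\mathcal{S}_i)$, implicitly replaces the non-intersection probabilities $\mathbb{P}(R'_y \cap \mathcal{S} = \emptyset)$ by their typical value $\sim \tfrac{\pi^2}{8\log n}$. This is precisely the step that cannot be justified at exponential scale: in the large-deviation regime the walk can reorganize so that these probabilities are far from their expectation, and the resulting error $\mathcal{E}_n$ is \emph{not} negligible at scale $e^{-b_n}$. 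The $L^2$ control from \cite{As5} is too weak, and there is no known way to upgrade it directly. You correctly flag this as the main obstacle, but your plan (``a finer decomposition \ldots\ together with exponential-moment bounds on each contribution'') is not a solution; it is a restatement of the problem.

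The paper circumvents this by \emph{not} replacing the non-intersection probabilities at the reduction stage. Instead, after a dyadic splitting of the walk, the cross term $\chi(\mathcal{S}^1,\mathcal{S}^2)$ is decomposed as $\chi = 2\,TL_n - 2\chi'$, where the main term
\[
TL_n = \sum_{x^1 \in \mathcal{S}^1}\sum_{x^2 \in \mathcal{S}^2} \mathbb{P}(R'_{x^1}\cap\mathcal{S}^1=\emptyset)\,G_D(x^1-x^2)\,\mathbb{P}(R'_{x^2}\cap\mathcal{S}^2=\emptyset)
\]
\emph{retains} the non-intersection probabilities but is now symmetric and factorizable via $G_D = \tilde{G}_D * \tilde{G}_D$. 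The error $\chi'$ involves events of the form $\{R'_{x^2}\cap\mathcal{S}^2=\emptyset,\,R'_{x^2}\cap\mathcal{S}^1\neq\emptyset\}$, which are genuinely one $\log n$ factor smaller and can be controlled by crude moment bounds. Only \emph{inside} the moment computation for $TL_n$ (Claim~\ref{claim:TL} in the paper), once the time indices are well separated, can the non-intersection probabilities be integrated out to yield the factors of $(\log n)^{-1}$ --- and this is what eventually produces the limiting Brownian functional $\int_0^1\int_0^1 G(B^1_t-B^2_s)\,\mathrm{d}t\,\mathrm{d}s$ that you are aiming for. The order of operations matters: keep the structure, compute moments, \emph{then} simplify.
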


\begin{cor}\label{c1}
For $d=4$, almost surely,
\begin{equation*}
    \liminf_{n \to \infty} 
   \frac{(\log n)^2}{n \log\log n}
   \bigg(\ca(\mathcal{S}[1,n]) - \mathbb{E}[\ca(\mathcal{S}[1,n])]\bigg)
   =-\frac{\pi^4}{2}\tilde{\kappa}(4,2)^{4}.
\end{equation*}
\end{cor}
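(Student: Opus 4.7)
The plan is to combine Theorem~\ref{m1} with Borel-Cantelli arguments along a subsequence, using the Markov property of the walk to extract independence. Write $c:=\tfrac{\pi^4}{2}\tilde{\kappa}(4,2)^4$ so that $I_4(\lambda)=\lambda/c$, set $X_n:=\ca(\mathcal{S}[1,n])-\mathbb{E}[\ca(\mathcal{S}[1,n])]$ and $a_n:=n\log\log n/(\log n)^2$. Substituting $b_n=\log\log n$ in Theorem~\ref{m1} yields the key estimate
$$
\mathbb{P}(X_n\le-\lambda a_n)=(\log n)^{-\lambda/c+o(1)}\qquad\text{for every }\lambda>0,
$$
from which both $\liminf X_n/a_n\le-c$ and $\liminf X_n/a_n\ge-c$ must be shown.

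For $\liminf X_n/a_n\le -c$, I would fix $\epsilon>0$, choose $\rho>0$ large enough that $(c-\epsilon/2)(1-e^{-\rho})>c-\epsilon$, and take the subsequence $n_k=\lfloor e^{\rho k}\rfloor$ so that $m_k:=n_k-n_{k-1}\sim(1-e^{-\rho})n_k$. By the Markov property, the increments $\widetilde{Y}_k:=\ca(\mathcal{S}[n_{k-1}+1,n_k])-\mathbb{E}[\ca(\mathcal{S}[1,m_k])]$ are independent across $k$, and Theorem~\ref{m1} applied to each yields $\mathbb{P}(\widetilde{Y}_k\le-(c-\epsilon/2)a_{m_k})=(\rho k)^{-(c-\epsilon/2)/c+o(1)}$, whose sum over $k$ diverges. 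The second Borel-Cantelli lemma thus triggers this event for infinitely many $k$. Subadditivity $\ca(\mathcal{S}[1,n_k])\le\ca(\mathcal{S}[1,n_{k-1}])+\ca(\mathcal{S}[n_{k-1}+1,n_k])$ then gives $X_{n_k}\le X_{n_{k-1}}+\widetilde{Y}_k+D_k$, where the deterministic defect $D_k\ge0$ is controlled via the sharper asymptotics of $\mathbb{E}[\ca(\mathcal{S}[1,n])]$ from \cite{As5,DO}, and $X_{n_{k-1}}=o(a_{n_k})$ by the upper-tail iterated-logarithm bound from \cite{DO}. These combine to produce $X_{n_k}\le-(c-\epsilon)a_{n_k}$ infinitely often, and $\epsilon$ was arbitrary.

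For $\liminf X_n/a_n\ge-c$, Theorem~\ref{m1} gives $\mathbb{P}(X_n\le-(c+\epsilon)a_n)\le(\log n)^{-1-\epsilon/(2c)}$ for all large $n$. Along a slow geometric subsequence $n_k=\lfloor(1+\eta)^k\rfloor$, this sum converges, so the first Borel-Cantelli lemma yields $X_{n_k}\ge-(c+\epsilon)a_{n_k}$ eventually. To interpolate to arbitrary $n\in[n_k,n_{k+1})$, I would combine the monotonicity $\ca(\mathcal{S}[1,n])\ge\ca(\mathcal{S}[1,n_k])$ with a maximal control of the independent shifted process $Z_{n_k,t}:=\ca(\mathcal{S}[n_k+1,n_k+t])-\mathbb{E}[\ca(\mathcal{S}[1,t])]$ over $t\le n_{k+1}-n_k$, itself obtained by applying Theorem~\ref{m1} to the shifted walk at a dyadic grid of times and extending to all $t$ via the $O(1)$ per-step Lipschitz continuity of $\ca(\mathcal{S}[1,\cdot])$. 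Letting $\eta\downarrow 0$ and then $\epsilon\downarrow 0$ concludes the proof.

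The main obstacle is the interpolation step in the second direction: on a geometric subsequence the window $[n_k,n_{k+1})$ has length $\Theta(n_k)$ while $a_n$ is smaller by a factor of order $\log\log n/(\log n)^2$, so the uniform fluctuation control over this window must be sharp enough to preserve the optimal constant $c$. Handling this requires the full moderate-deviation upper bound of Theorem~\ref{m1} applied to the shifted walk, together with a precise quantitative understanding of $\mathbb{E}[\ca(\mathcal{S}[1,n])]$ through its lower-order correction terms.
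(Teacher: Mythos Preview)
Your proposal is correct and follows essentially the same route as the paper's proof, which simply invokes Theorem~\ref{m1} with $b_n=\log\log n$, applies the two Borel--Cantelli lemmas along the geometric subsequence $n_k=e^k$, and defers all remaining details to \cite[Theorem~8.6.2]{Chenbook}. Your write-up is in fact more explicit than the paper's: you correctly separate the two directions, use a large ratio $e^{\rho}$ to make the increment dominate in the second Borel--Cantelli step, use a small ratio $1+\eta$ for the interpolation in the first Borel--Cantelli step, and invoke the upper-tail LIL from \cite{DO} to control $X_{n_{k-1}}$; all of this is exactly the machinery behind \cite[Theorem~8.6.2]{Chenbook}, and the interpolation issue you flag is handled there (and also implicitly in \cite{DO}, which already established the LIL with an unspecified constant).
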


\begin{rem}
    We conjecture that the optimal order of $b_n$ is $o(\log n)$ assuming that it is the same as that of the volume of random walks in $d=2$. This conjecture is inspired by the discussion of the rate function for the moderate deviation of the mutual intersection of two random walks in dimension 2 as discussed in \cite[Section 7.2]{Chenbook}. However, just as in the case of the volume, obtaining the optimal scale of large deviations is challenging, and we chose the scale $b_n = O(\log \log n)$ as this is sufficient to obtain the exact constant for the law of the iterated logarithm. However, we do expect that with some technical improvements, our methods can get much closer to the optimal scale of large deviations.   
\end{rem}

\subsection{Strategy}

As mentioned before, to find the exact value of the constant associated with the lower tail of the law of the iterated logarithms, one would need to first prove a form of the large deviation principle. 
To do this, one would need to have control over exponential moments of the quantity in question. Now, one can find some control over such moments in the works of \cite{DO}. However, if one exactly wants the constant, then these estimates have to be optimal. Even with the rather technical bounds of \cite{DO}, there were still multiple times when one could not precisely track the exponential factor associated with the high moments. While this is perfectly fine for proving that some law of iterated logarithm holds, it is  impossible to deduce anything about the value of the lower tail of the law of the iterated logarithm.

Inspired by the connection between the capacity and the self-intersection, one might try to see if there are any parallels one can draw from the proof of the large deviation principle for the self-intersection in 2-dimensions. Indeed, Bass, Chen, Kumagai and Rosen \cite{BCR,BK02} were able to establish an exact form for the constant associated with the large deviation principle for the self-intersection of random walks.

As observed in \cite{BCR,BK02}, a vital tool in both these analyses is a splitting formula. The self-intersection of a random walk can be written as the sum of two self-intersections of the first and second half of the walks and the mutual intersection of the first and second half. The large deviation behavior when $d=2$ is largely determined by this mutual intersection. For the capacity, one can perform a similar splitting with the quantity $\chi$ {like} in the work \cite{As3}. 

For two arbitrary sets $A$ and $B$, $\chi$ is defined as,
\begin{equation}\label{eq:defchi}
\begin{aligned}
   \chi(A,B):= & \sum_{y \in A} \sum_{z \in B} \mathbb{P}(R'_{y} \cap (A \cup B) =\emptyset) G_D(y-z) \mathbb{P}(R'_z \cap B =\emptyset) \\
   +& \sum_{y \in A} \sum_{z \in B} \mathbb{P}(R'_{y} \cap  A =\emptyset) G_D(y-z) \mathbb{P}(R'_z \cap (A \cup B) =\emptyset),
\end{aligned}
\end{equation}
where $R'_y$ is the range of an infinite random walk range after time $1$ starting at the point $y$ at time $0$.  
To show the result, we will substitute  two independent simple random walk ranges until time $n$, $A+\mathcal{S}^1$ and $B=\mathcal{S}^2$ (which are also independent of $R'_y$). 
The large deviation behavior should also be determined by this `mutual capacity', $\chi$.
However, after this step, if one tries to imitate the strategy of Bass, Chen, and Rosen \cite{BCR} to analyze $\chi$, fundamental difficulties arise at the very beginning that prevent one from proceeding forward. 

First of all, observe that each line of $\chi$, due to the probability term $\mathbb{P}(R'_y \cap (\mathcal{S}^1 \cup \mathcal{S}^2) =\emptyset)$, is asymmetric in $A$ and $B$. Furthermore, the same probability term couples the first and second parts of the random walk. In general, many formulas that one would like to apply to compute moments, such as the Feynman-Kac formula for lower bounds on the asymptotic moments, would first require one to separate the two halves of the random walk from each other. Usually, such a separation can be justified by applying the Cauchy-Schwartz inequality, and, as in the works of \cite{Chenbook} for the cross term occurring when studying the moderate deviations of the range of a random walk, one will not incur too much loss by performing this procedure.  This is no longer the case when one deals with an asymmetric cross-term like $\chi$. Indeed, the key first step in trying to determine the exact constant for the moderate deviations would be to try to identify a symmetric main term contribution for $\chi$.

The first guess that one might have would be to show that the terms $\mathbb{P}(R'_y \cap (\mathcal{S}^1 \cup \mathcal{S}^2)=\emptyset)$ could be replaced by the expected value $(1+o(1))\frac{\pi^2}{8 \log n}$. This replacement was performed in the papers \cite{As5,DO} in order to establish a CLT and a LIL, respectively. However, the moment estimates required to prove such results are insufficiently strong to demonstrate a large deviation principle or determine an exact constant. Indeed, the paper \cite{As1} remarked that it is possible that in the large deviation regime, it would be more effective for the random walk to reorganize itself into configurations such that $\mathbb{P}(R'_y \cap (S^1 \cup S^2)=\emptyset, 0 \not\in S^1)$ is far away from its expected value of $(1+o(1))\frac{\pi^2}{8 \log n}$.

Indeed, since we cannot replace these probability terms with their expectation, we have to determine the main and error terms via manipulations that preserve the structure of these probability terms. Indeed, our main term can be guessed to be of the form,
$$
\sum_{y \in \mathcal{S}^1} \sum_{z \in \mathcal{S}^2} \mathbb{P}(R'_{y} \cap \mathcal{S}^1=\emptyset) G_D(y-z) \mathbb{P}(R'_z \cap \mathcal{S}^2 =\emptyset).
$$
By decomposing $G_D = \tilde{G}_D * \tilde{G}_D$, the convolutional square root of $G_D$, we see that we indeed have a decomposition that could split the two sets $\mathcal{S}^1$ and $\mathcal{S}^2$ from each other. Namely, the quantity above can be written as,
$$
\sum_{a \in \mathbb{R}^4} \sum_{y \in \mathcal{S}^1} \mathbb{P}(R'_{y} \cap \mathcal{S}^1=\emptyset) \tilde{G}_D(y-a) \sum_{z \in \mathcal{S}^2} \mathbb{P}(R'_z \cap \mathcal{S}^2 =\emptyset) \tilde{G}_D(z-a). 
$$
This term will indeed be symmetric, and one has more tools for computing the exact value of the asymptotic moments. The full analysis of this term is given in section \ref{sec:TL}. 
This main term will lead to the corresponding error term,
\begin{equation*}
\sum_{x^1 \in \mathcal{S}^1} \sum_{x^2 \in \mathcal{S}^2} \mathbb{P}(R'_{x^1} \cap \mathcal{S}^1= \emptyset) G_D(x^1- x^2) \mathbb{P}(R'_{x^2} \cap \mathcal{S}^2= \emptyset, R'_{x^2}\cap \mathcal{S}^1 \ne \emptyset ).
\end{equation*}
The main observation is that this error term should approximately be of order $\frac{n}{(\log n)^3}$. This is one $\log n$ factor less than the expected order of the main term. One still needs to determine the value of high moments of this error term; however, one no longer needs to care about the exact values. Indeed, one only needs to derive an upper bound for the high moments of this error term. Section \ref{sec:introdecomp} will justify the splitting of $\chi$ into its main and error terms, while Section \ref{sec:ProofofThmchipr} will analyze the error term. The analysis of this error term involved multiple steps; the first step was to represent the cumbersome $\mathbb{P}(R'_{x^2} \cap \mathcal{S}^2= \emptyset, R'_{x^2}\cap \mathcal{S}^1 \ne \emptyset )$  into another term that is fit for moment computation. Afterward, we had to carefully exploit a version of monotonicity for the non-intersection probability $\mathbb{P}(R'_{x^1} \cap \mathcal{S}^1= \emptyset)$ that would allow us to justify the replacement of $\mathbb{P}(R'_{x^1} \cap \mathcal{S}^1= \emptyset)$ with its expectation.  When considering a law of iterated logarithms, we see that the size on this scale will be larger by a factor of $\log \log n$. Thus, the random walk has atypical behavior and one has to be very precise with the analysis and can no longer rely on heuristics coming from analyzing typical behavior.

\section{Proof of Theorem \ref{m1} and Corollary \ref{c1}} 
In this section, we show our main results, that is, Theorem \ref{m1} and Corollary \ref{c1}. 
In the proof, we write $f (n) \lesssim g(n)$ if there exists a (deterministic) constant $c>0$ such that $f (n) \le cg(n)$  for all $n$, and $f (n) \gtrsim g(n)$ if $g(n) \lesssim f (n)$. 
$\mathcal{S}[a,b]$ means the random walk range between time $a$ and $b$. 
Let $\mathbb{P}^x$ (resp. $\mathbb{E}^x$) be the probability of the simple random walk (or the Brownian motion) starting at $x$. 
We usually write $\mathbb{P}$ (resp. $\mathbb{E}$) for $\mathbb{P}^0$ (resp. $\mathbb{E}^0$). 

\subsection{Reduction to the study of mutual capacity} \label{sec:section2}

 In order to determine the exact moderate deviation asymptotic for 
 $\ca(\mathcal{S}[1,n]) - \mathbb{E}[\ca(\mathcal{S}[1,n])]$, it suffices to derive a moderate deviation for the term $\chi$. 
 {For two random walks $\mathcal{S}^1$ and $\mathcal{S}^2$, recall the cross-term in \eqref{eq:defchi}
\begin{equation*}
\begin{aligned}
\chi(\mathcal{S}^1,\mathcal{S}^2):= & \sum_{x^1 \in \mathcal{S}^1} \sum_{x^2 \in \mathcal{S}^2} \mathbb{P}(R'_{x^1} \cap \mathcal{S}^1= \emptyset) G_D(x^1 - x^2) \mathbb{P}(R'_{x^2} \cap ( \mathcal{S}^1 \cup \mathcal{S}^2) = \emptyset)
\\&+\sum_{x^1 \in \mathcal{S}^1} \sum_{x^2 \in \mathcal{S}^2}\mathbb{P}(R'_{x^1} \cap (\mathcal{S}^1 \cup \mathcal{S}^2)= \emptyset) G_D(x^1 - x^2) \mathbb{P}(R'_{x^2} \cap  \mathcal{S}^1= \emptyset).
\end{aligned}
\end{equation*}
}
Later, 
we assume that $\mathcal{S}^1,\mathcal{S}^2$ are independent random walks of duration $n$ and $\mathcal{S}$ is also a random walk of duration $n$, that is, $\mathcal{S}[1,n]$. 
\begin{thm}\label{thm:chi}
Consider $\chi=\chi(\mathcal{S}^1,\mathcal{S}^2)$ and let $b_n= O(\log \log n)$ with $\lim_{n \to \infty }b_n = \infty$. Then, for any $\lambda>0$,
 \begin{equation}
\lim_{n \to \infty} \frac{1}{b_n} \log \mathbb{P}\left(\chi \ge \lambda \frac{nb_n}{(\log n)^2} \right)=- I_4(\lambda). 
\end{equation}
\end{thm}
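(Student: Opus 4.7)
The plan is to split $\chi = \chi_M + \chi_E$ into a symmetric main term and a smaller error term, using the identity $\mathbb{P}(R'_{x^i}\cap(\mathcal{S}^1\cup\mathcal{S}^2)=\emptyset) = \mathbb{P}(R'_{x^i}\cap\mathcal{S}^i=\emptyset) - \mathbb{P}(R'_{x^i}\cap\mathcal{S}^i=\emptyset,\, R'_{x^i}\cap\mathcal{S}^{3-i}\neq\emptyset)$ in each line of $\chi$. This produces
\begin{equation*}
\chi_M := 2\sum_{x^1\in\mathcal{S}^1}\sum_{x^2\in\mathcal{S}^2} \mathbb{P}(R'_{x^1}\cap\mathcal{S}^1=\emptyset)\,G_D(x^1-x^2)\,\mathbb{P}(R'_{x^2}\cap\mathcal{S}^2=\emptyset),
\end{equation*}
with $\chi_E$ collecting the correction terms. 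I would prove the moderate deviation with rate $I_4(\lambda)$ for $\chi_M$ in both directions, and then show that $\chi_E$ is of order $n/(\log n)^3$ with sufficiently fast exponential decay, so that $\mathbb{P}(\chi_E\ge\epsilon n b_n/(\log n)^2)$ decays faster than $e^{-C b_n}$ for every fixed $C$, making it negligible at the desired scale.

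\textbf{Main term via convolution square root.} For $\chi_M$, use the factorization $G_D = \tilde G_D*\tilde G_D$ to write
\begin{equation*}
\chi_M/2 = \sum_{a\in\mathbb{Z}^4} W_1(a)W_2(a),\qquad W_i(a) := \sum_{y\in\mathcal{S}^i} \mathbb{P}(R'_y\cap\mathcal{S}^i=\emptyset)\,\tilde G_D(y-a),
\end{equation*}
which decouples the dependence on the two walks. High integer moments $\mathbb{E}[\chi_M^k]$ with $k\asymp b_n$ then factorize, by independence, into products of moments involving only $\mathcal{S}^1$ or only $\mathcal{S}^2$. After Donsker-type rescaling the non-intersection density $\mathbb{P}(R'_y\cap\mathcal{S}^i=\emptyset)$ concentrates at $(1+o(1))\pi^2/(8\log n)$, a step justified by the monotonicity arguments already used in \cite{As5,DO}, and what remains is a Brownian functional of the form $\int_0^1\!\int_0^1 G(B^1_s-B^2_t)\,ds\,dt$. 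Its exponential moments are controlled, by a Feynman--Kac variational identity paralleling \cite{BCR}, through the supremum
\begin{equation*}
\sup_{\|f\|_{L^2}=1}\left\{\left(\int\!\int f^2(x)G(x-y)f^2(y)\,dx\,dy\right)^{1/2} - \tfrac12\|\nabla f\|_{L^2}^2\right\},
\end{equation*}
which by duality with the generalized Gagliardo--Nirenberg inequality is expressed in terms of $\tilde\kappa(4,2)$. A short Legendre-transform calculation, together with the linear-in-$\lambda$ character of the tail coming from the scaling of the limiting Brownian functional, then produces $I_4(\lambda)=2\pi^{-4}\tilde\kappa(4,2)^{-4}\lambda$. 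The matching lower bound is obtained by a change of measure that tilts the law of the two walks toward a localized copy of the Gagliardo--Nirenberg optimizer on the correct spatial scale.

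\textbf{Error term and main obstacle.} The hardest step will be bounding $\chi_E$. The joint probability $\mathbb{P}(R'_{x^2}\cap\mathcal{S}^2=\emptyset,\, R'_{x^2}\cap\mathcal{S}^1\neq\emptyset)$ must first be reshaped --- for instance by decomposing on the first entrance of $R'_{x^2}$ into $\mathcal{S}^1$ and applying a hitting-time Green's function identity --- into a form amenable to moment computation. Once reshaped, the next ingredient is a monotonicity property of $\mathbb{P}(R'_{x^1}\cap\mathcal{S}^1=\emptyset)$ in the set $\mathcal{S}^1$, which lets me upper bound this factor by a deterministic multiple of $1/\log n$, saving one factor of $\log n$ and placing $\chi_E$ at order $n/(\log n)^3$. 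Only an upper bound on moments of order $\asymp b_n$ is needed, so exact asymptotics are not required; nevertheless, carrying out the combinatorial bookkeeping while preserving this $\log n$ gain --- especially at the law-of-iterated-logarithm scale, where the walk is already in atypical territory and typical-behavior heuristics fail --- is the main technical burden, and is the content anticipated for Section \ref{sec:ProofofThmchipr}.
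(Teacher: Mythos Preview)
Your decomposition $\chi=\chi_M+\chi_E$ with $\chi_M=2TL_n$ and the main-term analysis via the convolutional square root $\tilde G_D$, Feynman--Kac, and the generalized Gagliardo--Nirenberg constant are exactly the route taken in the paper (Section~\ref{sec:TL}).

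The error-term part, however, has a real gap. First, the claim that monotonicity lets you ``upper bound $\mathbb{P}(R'_{x^1}\cap\mathcal{S}^1=\emptyset)$ by a deterministic multiple of $1/\log n$'' is false: this probability is a random variable in the configuration $\mathcal{S}^1$ and can be of order $1$ for atypical walks (precisely the regime that matters at the moderate-deviation scale). In the paper monotonicity is used only to replace $\mathcal{S}^1$ by a local segment $\mathcal{S}^1[i-n^\alpha,i+n^\alpha]$; the $1/\log n$ then comes from \emph{averaging} in the moment computation once these local segments are rendered conditionally independent (Claim~\ref{claim:TL}), never from a pointwise bound.

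Second, and more importantly, after your last-hit decomposition you obtain a term of the form
\[
\sum_{x^1_1,x^1_2\in\mathcal{S}^1}\sum_{x^2\in\mathcal{S}^2}\mathbb{P}(R'_{x^1_1}\cap\mathcal{S}^1=\emptyset)\,G_D(x^1_1-x^2)\,G_{\mathcal{S}^2}(x^2,x^1_2)\,\mathbb{P}(R'_{x^1_2}\cap\mathcal{S}^1=\emptyset),
\]
with the \emph{restricted} Green's function $G_{\mathcal{S}^2}$. The two visible non-intersection factors only account for $(\log n)^{-2}$, which is the same order as $TL_n$; bounding $G_{\mathcal{S}^2}\le G_D$ leaves you at $n/(\log n)^2$, not $n/(\log n)^3$. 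The third $1/\log n$ must be extracted from $G_{\mathcal{S}^2}$ itself, and this is the step your outline does not anticipate. The paper handles it (Section~\ref{sec:ProofofThmchipr}) by subdividing $\mathcal{S}^2$ into blocks $\mathcal{S}^2_{\beta,j}$ of length $n^\beta$ and using a matrix-inversion identity to write, for $x^1_2$ far from the block,
\[
G_{\mathcal{S}^2_{\beta,j}}(x^2,x^1_2)\approx \mathbb{P}(R'_{x^2}\cap\mathcal{S}^2_{\beta,j}=\emptyset)\,G_D(\tilde x^2_j-x^1_2)+\text{(small errors)},
\]
which manufactures the missing non-intersection probability. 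Controlling the resulting error pieces $\mathcal{E}_1,\mathcal{E}_2,\mathcal{E}_3$ (via operator-norm bounds on $(\mathcal{G}^{\mathcal{S}^2_{\beta,j}})^{-1}$ and Green's function increments) is the bulk of the work, and is not something that falls out of ``monotonicity plus a hitting-time identity'' alone.
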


We will show it in Section \ref{sec:introdecomp}. 
We will give the proof of Theorem \ref{m1} assuming the above result.

\begin{proof}[Proof of Theorem \ref{m1}]
\textit{Splitting the Walk}

For simplicity in the presentation of the argument, we will perform computations when $n$ is a  multiple of a large power of $2$. 
For a complete formalization of the argument, one can consider a continuous time random walk rather than a discrete time random walk as in \cite[Chapter 6]{BCR} to derive large deviation estimates, but the essential difference in the proofs are minimal.  

First, fix a large integer $L$; we first subdivide our random walk $\mathcal{S}$  into $2^L$  parts over various iterations. Set $m_l = n/2^l$ and let $\mathcal{S}^{(k),m_l}$ denote $\mathcal{S}[(k-1) m_l, k m_l]$; namely, it is the $k$-th portion of the random walk once divided into $2^l$ equal parts. 
With this notation in hand, we can define the cross-term, 
\begin{equation*}
 \Lambda_l = \sum_{j=1}^{2^l-1} \chi(\mathcal{S}^{(2j-1),m_l}, \mathcal{S}^{(2j),m_l}).  
 \end{equation*}
We also have the following decomposition of $\ca(\mathcal{S})$, 
\begin{align*}
\ca (\mathcal{S})
= \sum_{i=1}^{2^L} \ca (\mathcal{S}^{(i),m_L}) 
-\sum_{l=1}^L \Lambda_l + \epsilon_L. 
\end{align*}
The error $\epsilon_L$ has the moment bound $\mathbb{E}[\epsilon_L^2] = O((\log n)^2)$ from \cite[Proposition 2.3]{As3}. 
It is actually better to deal with a slightly modified cross-term.
Consider two random walks, $\mathcal{S}^1,\mathcal{S}^2$ of the same length $n$. Define, as in equation \eqref{eq:deftlprn} which will appear in the sequel, the modified cross term:
\begin{equation*}
TL(\mathcal{S}^1,\mathcal{S}^2) = \sum_{x^1 \in \mathcal{S}^1} \sum_{x^2 \in \mathcal{S}^2} \mathbb{P}(R'_{x^1} \cap \mathcal{S}^1 = \emptyset) G_D(x^1- x^2) \mathbb{P}(R'_{x^2} \cap \mathcal{S}^2 = \emptyset).
\end{equation*}
The results of Theorem \ref{thm:chipr}  show that for any $\epsilon>0$, we have that, 
\begin{equation*}
\lim_{n \to \infty} \frac{1}{b_n} \log \mathbb{P}\left( |\chi(\mathcal{S}^{(2j-1),m_l}, \mathcal{S}^{(2j),m_l}) - 2 TL(\mathcal{S}^{(2j-1),m_l}, \mathcal{S}^{(2j),m_l})| \ge \epsilon \frac{n b_n}{(\log n)^2}\right) = -\infty.
\end{equation*}
Accordingly, it is natural to consider the modified term,
\begin{equation*}
\tilde{\Lambda}_l:= 2\sum_{j=1}^{2^l-1} TL(\mathcal{S}^{(2j-1),m_l}, \mathcal{S}^{(2j),m_l}).
\end{equation*}
Furthermore, the moment bound on $\epsilon_L$ combined with Markov's inequality shows that
\begin{equation*}
\frac{1}{b_n} \log \mathbb{P}\left(\epsilon_L \ge \epsilon  \frac{n}{(\log n)^2}\right) \lesssim \frac{- \log n +  \log \log n + \log \epsilon }{b_n}.
\end{equation*}
Thus,
\begin{equation*}
\lim_{n \to \infty} \frac{1}{b_n} \log \mathbb{P}\left(\epsilon_L \ge \epsilon  \frac{n}{(\log n)^2}\right) =-\infty.
\end{equation*}


Combining these facts, we see that if we fix $L$ and take $n \to \infty$, we have that
\begin{equation*}
\begin{aligned}
&\lim_{n \to \infty} \frac{1}{b_n} \log \mathbb{P}\left(-\ca(\mathcal{S})  + \mathbb{E}[\ca(\mathcal{S})]\ge \lambda\frac{b_n n}{(\log n)^2}\right) \\&= \lim_{n \to \infty} \frac{1}{b_n} \log \mathbb{P} \left(-\sum_{i=1}^{2^L}( \ca (\mathcal{S}^{(i),m_L}) - \mathbb{E}[ \ca (\mathcal{S}^{(i),m_L})]) 
+\sum_{l=1}^L (\tilde{\Lambda}_l - \mathbb{E}[\tilde{\Lambda}_l]) \ge \lambda \frac{b_n n}{(\log n)^2}\right).
\end{aligned}
\end{equation*}
Note that in the previous expression, we used the fact that $\mathbb{E}[\epsilon_L]$ and $\mathbb{E}[|\Lambda_l - \tilde{\Lambda}_l|]$, would not contribute to the expectations.

Our goal now is to show the following:

\begin{equation} \label{eq:Simplification}
\begin{aligned}
    &\lim_{L \to \infty} \lim_{n \to \infty} \frac{1}{b_n}
     \log \mathbb{P}\left(\sum_{i=1}^{2^L} (-\ca(\mathcal{S}^{(i),m_L}) + \mathbb{E}[\ca(\mathcal{S}^{(i),m_L})]) + \sum_{l=1}^L (\tilde{\Lambda}_l - \mathbb{E}[\tilde{\Lambda}_l])  \ge  \lambda \frac{b_n n}{(\log n)^2} \right) \\& = -I_4(\lambda). 
\end{aligned}
\end{equation}

We will start with showing the upper bound of \eqref{eq:Simplification}.

\textit{Upper Bound in \eqref{eq:Simplification}:}
It is manifest that $\mathbb{E}[\tilde{\Lambda}_l]$ is a positive number. Thus, if we only care about obtaining upper bounds on the probability found in equation \eqref{eq:Simplification}, we can drop the term $-\mathbb{E}[\tilde{\Lambda}_l]$ in the computation for the upper bound. We have,
\begin{align} \label{eq:sumbnd}
&\mathbb{P}\left( \sum_{i=1}^{2^L} (-\ca(\mathcal{S}^{(i),m_L}) + \mathbb{E}[\ca(\mathcal{S}^{(i),m_L})]) + \sum_{l=1}^L \tilde{\Lambda}_l \ge \lambda \frac{b_n n}{(\log n)^2} \right)\\
\notag
\le &\mathbb{P}\left( \sum_{i=1}^{2^L} (\mathbb{E}[\ca (\mathcal{S}^{(i),m_L})] -\ca (\mathcal{S}^{(i),m_L}))
\ge \epsilon \frac{\lambda n}{(\log n)^2}b_n\right)\\
\notag
+&\sum_{l=1}^L 
\mathbb{P}\left( \tilde{\Lambda}_l \ge (1-\epsilon) 2^{-l}\frac{\lambda n}{(\log n)^2}b_n\right).
\end{align}
By using Lemma  \ref{lem:aprioribound} and  \cite[Theorem 1.2.2]{Chenbook}, we can derive that 
\begin{equation} \label{eq:trivbnd}
\limsup_{n\to \infty} \frac{1}{b_n} \log \mathbb{P}\left( \sum_{i=1}^{2^L} (\mathbb{E}[\ca (\mathcal{S}^{(i),m_L})] -\ca (\mathcal{S}^{(i),m_L}))
\ge \epsilon \frac{\lambda n}{(\log n)^2}b_n \right) \le - 2^L C \lambda \epsilon.
\end{equation}

Now recall that $\tilde{\Lambda}_l$ is a sum of i.i.d. random variables. 
We can apply our Theorem \ref{thm:TL} along with \cite[Theorem 1.2.2]{Chenbook} to assert that 
\begin{equation} \label{eq:sumtildlamb}
   \limsup_{n\to \infty} \frac{1}{b_n} \log \mathbb{P}\left(\tilde{\Lambda}_l \ge (1-\epsilon) 2^{-l} \frac{\lambda n}{(\log n)^2} b_n \right) \le - I_4(\lambda- \epsilon).
\end{equation}
If we combine equations \eqref{eq:sumtildlamb} and \eqref{eq:trivbnd} in equation \eqref{eq:sumbnd}, 
we see that,
\begin{equation*}
\begin{aligned}
& \limsup_{n \to \infty} \frac{1}{b_n} \log \mathbb{P}\left( \sum_{i=1}^{2^L} (-\ca(\mathcal{S}^{(i),m_L}) + \mathbb{E}[\ca(\mathcal{S}^{(i),m_L})] )+ \sum_{l=1}^L \tilde{\Lambda}_l \ge \frac{\lambda  n b_n}{(\log n)^2}\right) \\ &\le - \min\left( 2^L C \lambda \epsilon,  I_4(\lambda- \epsilon)\right).
\end{aligned}
\end{equation*}
If we first take $L$ to $\infty$ and then $\epsilon \to 0$, 
we derive the desired upper bound on the probability.

\textit{Lower bound in \eqref{eq:Simplification}:}

First consider the quantity $SL_n$  as in equation \eqref{def:SLN} given by,
\begin{equation*}
SL_n= \sum_{x^1 \in \mathcal{S}} \sum_{x^2 \in \mathcal{S}} \mathbb{P}(R'_{x^1} \cap  \mathcal{S} = \emptyset) G_D(x^1-x^2) \mathbb{P}(R'_{x^2} \cap  \mathcal{S} = \emptyset).
\end{equation*}
Since
\begin{equation*}
\begin{aligned}
SL_n
\le & \sum_{i=1}^{2^L} \sum_{x^1,x^2 \in \mathcal{S}^{(i),m_L}} \mathbb{P}(R'_{x^1} \cap  \mathcal{S}^{(i),m_L} = \emptyset) G_D(x^1-x^2) \mathbb{P}(R'_{x^2} \cap  \mathcal{S}^{(i),m_L} = \emptyset)\\
+&  \sum_{\substack{x^1 \in \mathcal{S}^{(i),m_L}, x^2 \in \mathcal{S}^{(j),m_L}, \\ 1\le i \neq j \le 2^L}}\mathbb{P}(R'_{x^1} \cap  \mathcal{S} = \emptyset) G_D(x^1-x^2) \mathbb{P}(R'_{x^2} \cap  \mathcal{S} = \emptyset)
\end{aligned}
\end{equation*}
and the second term in the right hand side is bounded by
\begin{equation*}
\begin{aligned}
2  \sum_{l=1}^L \sum_{j=1}^{2^l-1} \sum_{\substack{x^1 \in \mathcal{S}^{(2j-1),m_l},\\ x^2 \in \mathcal{S}^{(2j),m_l}}}
\mathbb{P}(R'_{x^1} \cap  \mathcal{S} = \emptyset) G_D(x^1-x^2) \mathbb{P}(R'_{x^2} \cap  \mathcal{S} = \emptyset)
\le \sum_{l=1}^L \tilde{\Lambda}_l, 
\end{aligned}
\end{equation*}
we have that,
\begin{equation*}
\begin{aligned}
& \sum_{i=1}^{2^L} (-\ca(\mathcal{S}^{(i),m_L}) + \mathbb{E}[\ca(\mathcal{S}^{(i),m_L})]) + \sum_{l=1}^L (\tilde{\Lambda}_l - \mathbb{E}[\tilde{\Lambda}_l]) \\
&\ge SL_n-\mathbb{E}[SL_n]   +  \sum_{i=1}^{2^L} (-\ca(\mathcal{S}^{(i),m_L}) + \mathbb{E}[\ca(\mathcal{S}^{(i),m_L})]) - \sum_{l=1}^L \mathbb{E}[\tilde{\Lambda}_l] \\& - \sum_{i=1}^{2^L} \sum_{x^1,x^2 \in \mathcal{S}^{(i),m_L}} \mathbb{P}(R'_{x^1} \cap  \mathcal{S}^{(i),m_L} = \emptyset) G_D(x^1-x^2) \mathbb{P}(R'_{x^2} \cap  \mathcal{S}^{(i),m_L} = \emptyset)+\mathbb{E}[SL_n].
\end{aligned}
\end{equation*}

Noting that $\sum_{l=1}^L \mathbb{E}[\tilde{\Lambda}_l] = O\left( \frac{n }{(\log n)^2} \right)$, the term $\sum_{l=1}^{L} \mathbb{E}[\tilde{\Lambda}_l]$ will not contribute to the large deviation statistics to the order we are concerned with. 
In addition, 
\begin{equation*}
\begin{aligned}
&  \mathbb{E}[\sum_{i=1}^{2^L} \sum_{x^1,x^2 \in \mathcal{S}^{(i),m_L}} \mathbb{P}(R'_{x^1} \cap  \mathcal{S}^{(i),m_L} = \emptyset) G_D(x^1-x^2) \mathbb{P}(R'_{x^2} \cap  \mathcal{S}^{(i),m_L} = \emptyset)] -\mathbb{E}[SL_n] \\
\le & \mathbb{E}[\sum_{i=1}^{2^L} \sum_{x^1,x^2 \in \mathcal{S}^{(i),m_L}} \mathbb{P}(R'_{x^1} \cap  \mathcal{S}^{(i),m_L} = \emptyset) G_D(x^1-x^2) \mathbb{P}(R'_{x^2} \cap  \mathcal{S}^{(i),m_L} = \emptyset)]\\
-& \mathbb{E}[\sum_{i=1}^{2^L} \sum_{x^1,x^2 \in \mathcal{S}^{(i),m_L}} \mathbb{P}(R'_{x^1} \cap  \mathcal{S} = \emptyset) G_D(x^1-x^2) \mathbb{P}(R'_{x^2} \cap  \mathcal{S} = \emptyset)]  + \frac{C n }{(\log n)^2}\\
\le & 2 \mathbb{E}[\sum_{i=1}^{2^L} \sum_{x^1,x^2 \in \mathcal{S}^{(i),m_L}}
\mathbb{P}(R'_{x^1} \cap  \mathcal{S}^{(i),m_L} = \emptyset, R'_{x^1} \cap  \mathcal{S} \neq \emptyset) G_D(x^1-x^2) \mathbb{P}(R'_{x^2} \cap  \mathcal{S}^{(i),m_L} = \emptyset)]\\
+&  \frac{C n }{(\log n)^2}
\lesssim    \frac{n }{(\log n)^2}.
\end{aligned}
\end{equation*}
The final inequality is very similar to the type of
error terms we have dealt with in Section \ref{sec:ProofofThmchipr}. 
Thus, we omit the proof. 
Thus, we have that
\begin{equation}\label{gggy}
\begin{aligned}
 & \mathbb{P}\left( \sum_{i=1}^{2^L} (-\ca(\mathcal{S}^{(i),m_L}) + \mathbb{E}[\ca(\mathcal{S}^{(i),m_L})]) + \sum_{l=1}^L \tilde{\Lambda}_l \ge \frac{\lambda  n b_n}{(\log n)^2} \right) \\ 
 &\ge \mathbb{P}\left(SL_n-\mathbb{E}[SL_n] \ge \frac{(\lambda +\epsilon)  n b_n}{(\log n)^2} \right)- \mathbb{P}\left(\sum_{i=1}^{2^L} (\ca(\mathcal{S}^{(i),m_L}) - \mathbb{E}[\ca(\mathcal{S}^{(i),m_L})])
  \ge \frac{\epsilon n  b_n }{2 (\log n)^2}\right) \\
  &- \mathbb{P} \bigg( \sum_{i=1}^{2^L} \sum_{x^1,x^2 \in \mathcal{S}^{(i),m_L}}
  \mathbb{P}(R'_{x^1} \cap  \mathcal{S}^{(i),m_L} = \emptyset) G_D(x^1-x^2) \mathbb{P}(R'_{x^2} \cap  \mathcal{S}^{(i),m_L} = \emptyset) \\
& -\mathbb{E}[\sum_{i=1}^{2^L} \sum_{x^1,x^2 \in \mathcal{S}^{(i),m_L}}
  \mathbb{P}(R'_{x^1} \cap  \mathcal{S}^{(i),m_L} = \emptyset) G_D(x^1-x^2) \mathbb{P}(R'_{x^2} \cap  \mathcal{S}^{(i),m_L} = \emptyset)] 
  \ge \frac{\epsilon n b_n}{2(\log n)^2} \bigg). 
\end{aligned}
\end{equation}

Now, we note that the negative quantities on  the right hand side are the sum of i.i.d random variables; the term on the last line are also of the form $SL_{n 2^{-L}}$. By using Lemma \ref{lem:aprioribound} and the result for $SL_n$ from Corollary \ref{cor:SLN} as well as \cite[Theorem 1.2.2]{Chenbook}, we have that the probabilities in the last two lines are bounded by $\exp[b_n(-2^L C \epsilon)]$ for some constant $C$.

Furthermore, Corollary \ref{cor:SLN} also gives us that $\lim_{n\to \infty}\frac{1}{b_n} \log P(SL_n-\mathbb{E}[SL_n] \ge \frac{(\lambda +\epsilon) n b_n}{(\log n)^2}) = -I_4(\lambda+\epsilon)$.
Given $\epsilon$, if we first choose $L$ such that $-2^L C \epsilon \ll - I_4(\lambda+\epsilon)$, we see that,
\begin{equation*}
\begin{aligned}
&\liminf_{n \to \infty} \frac{1}{b_n} \log \mathbb{P}\left( \sum_{i=1}^{2^L} (-\ca(\mathcal{S}^{(i),m_l}) + \mathbb{E}[\ca(\mathcal{S}^{(i),m_l})]) + \sum_{l=1}^L \tilde{\Lambda}_l \ge \frac{\lambda  n b_n}{(\log n)^2}\right) \\
\ge & - I_4(\lambda+\epsilon).
\end{aligned}
\end{equation*}
We can then take the limit as $L$ to $\infty$ and then $\epsilon \to 0$ to show equation \eqref{eq:Simplification}. This completes the proof of the result.

\end{proof}

 We can quickly derive our corollary for the exact constant of the LIL for the lower tail of $\ca(\mathcal{S}) - \mathbb{E}[\ca(\mathcal{S})] $.
\begin{proof}[Proof of  Corollary \ref{c1} ]
This will follow by carefully applying the Borel-Cantelli lemma. 
The large deviation estimates of Theorem \ref{m1} are used to derive the appropriate convergence or divergence conditions. 
We should take $\lambda b_n=  (1+\epsilon) I_4(1)^{-1}\log n$ and then choose the sequence $a_n=e^n$ in Theorem \ref{m1}. While we obtain the lower bound  by the first Borel-Cantelli lemma taking $\epsilon<0$, the upper bound by second taking $\epsilon>0$. 
The details are the same as those found in \cite[Theorem 8.6.2]{Chenbook}. 
\end{proof}

\subsection{A priori Estimates on $\ca(\mathcal{S})$}

In this section, we will prove the following large deviation principle on $\ca(\mathcal{S})$. 
The following lemma will give a sufficient a-priori large deviation estimate to bound the second term of the second line of \eqref{gggy}. 
\begin{lem} \label{lem:aprioribound}
Let $b_n = \text{O}(\log \log n)$ {with $\lim_{n \to \infty }b_n = \infty$}. There exists some constant $C$ such that for any $\lambda>0$,
\begin{equation} \label{eq:aprior}
\limsup_{n \to \infty} \frac{1}{b_n} \log  
\mathbb{P}\left( | \ca(\mathcal{S}) - \mathbb{E}[\ca(\mathcal{S})] | \ge \frac{\lambda n}{(\log n)^2} b_n \right) \le -C \lambda.
\end{equation}
\end{lem}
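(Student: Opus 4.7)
The plan is to reduce the tail bound to a high moment estimate via Markov's inequality, and to obtain that moment estimate by induction using the walk-splitting decomposition already introduced in the proof of Theorem \ref{m1}. More precisely, the target intermediate claim is that there exists a universal constant $K$ such that for every integer $p$ with $1\le p\le b_n$,
\begin{equation*}
\mathbb{E}\bigl[|\ca(\mathcal{S}) - \mathbb{E}\ca(\mathcal{S})|^{p}\bigr] \le (Kp)^{p}\left(\frac{n}{(\log n)^2}\right)^{p}.
\end{equation*}
Once this is in hand, Markov's inequality with the choice $p = \lfloor \lambda b_n/(eK)\rfloor$ yields
\begin{equation*}
\mathbb{P}\left(|\ca(\mathcal{S}) - \mathbb{E}\ca(\mathcal{S})| \ge \frac{\lambda n b_n}{(\log n)^2}\right) \le \left(\frac{Kp}{\lambda b_n}\right)^{p} \le e^{-p},
\end{equation*}
and \eqref{eq:aprior} follows with $C = 1/(eK)$. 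The assumption $b_n = O(\log\log n)$ is used only to ensure that the required moment order $p$ stays polylogarithmic in $n$, so that the constants produced by the induction remain bounded.

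For the moment bound, I would start from the identity used in the proof of Theorem \ref{m1},
\begin{equation*}
\ca(\mathcal{S}) - \mathbb{E}\ca(\mathcal{S}) = \sum_{i=1}^{2^L}\bigl(\ca(\mathcal{S}^{(i),m_L}) - \mathbb{E}\ca(\mathcal{S}^{(i),m_L})\bigr) - \sum_{l=1}^{L}(\Lambda_l - \mathbb{E}\Lambda_l) + (\epsilon_L - \mathbb{E}\epsilon_L),
\end{equation*}
and apply Minkowski's inequality to control the $L^p$ norm of each piece separately. The first sum consists of $2^L$ i.i.d.\ centered copies of the target on walks of length $n/2^L$, so a Rosenthal/Marcinkiewicz--Zygmund inequality controls its $L^p$ norm by a constant times $\sqrt{2^L p}$ times the $L^p$ norm of one summand; this sets up an inductive step in $n$. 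The cross-terms $\Lambda_l = \sum_j \chi(\mathcal{S}^{(2j-1),m_l}, \mathcal{S}^{(2j),m_l})$ are sums of i.i.d.\ copies of $\chi$, whose moments are controlled using the estimates derived in Section \ref{sec:introdecomp} (specifically via the decomposition $\chi \approx 2\,TL + \text{error}$ analyzed in Section \ref{sec:ProofofThmchipr}, combined with the moment bounds on $TL$ underlying Theorem \ref{thm:TL}). Finally, the remainder $\epsilon_L$ has second moment of order $(\log n)^2$ by \cite[Proposition 2.3]{As3} and is trivially bounded by $Cn$; interpolating between these two bounds yields a contribution negligible on the scale $n b_n/(\log n)^2$.

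The main obstacle will be carefully tracking the constants through the induction: each recursive application of the Minkowski/Rosenthal step multiplies the constant by a fixed factor (depending on $L$), so one has to argue that after $O(\log n)$ rounds of halving, the accumulated constant remains at most $(Kp)^p$ with $K$ absolute. The key point that makes this possible is the scale separation $p = O(\log\log n) \ll \log n$, so the polynomial-in-$p$ growth of Rosenthal constants is harmless. Once this is verified, the induction closes, the desired moment estimate follows, and Markov's inequality finishes the argument.
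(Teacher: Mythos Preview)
The induction you sketch does not close. With $O(\log n)$ rounds of halving, each Rosenthal/Marcinkiewicz--Zygmund step on the i.i.d.\ sum contributes a multiplicative factor of order $C\sqrt{p}/\sqrt{2^L}$; for fixed $L$ this exceeds $1$ once $p$ is larger than a fixed constant, and over $\Theta(\log n)$ rounds it compounds to a positive power of $n$, not $O(1)$. The scale separation $p = O(\log\log n) \ll \log n$ does nothing to tame the \emph{number} of iterations. Even if you tune $L$ to grow with $p$ so that the i.i.d.\ factor is below $1$ per step, the accumulated cross-term sum $\sum_l \Lambda_l$ over $\sim \log n$ levels becomes the dominant contribution and needs its own argument. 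Your proposed control of $\Lambda_l$ via $\chi \approx 2\,TL + \text{error}$ is also problematic here: the error estimates in Section~\ref{sec:ProofofThmchipr} (e.g.\ Lemma~\ref{lem:MTn} and Lemma~\ref{lem:E23}) give only first moments, or $m$-th moments with unspecified $m$-dependent constants $C_m$, which is not enough for the uniform-in-$p$ bound you need.

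The paper avoids both issues by a different mechanism. It performs a \emph{single} decomposition to depth $L = 4\log\log n$ (so $2^L \ge (\log n)^4$) and then handles each piece with exponential-moment bounds rather than an induction on polynomial moments. For the i.i.d.\ sum $\sum_i(\ca(\mathcal{S}^{(i),m_L}) - \mathbb{E}[\cdot])$, each summand is deterministically bounded by $n/2^L \le n/(\log n)^4$, and \cite[Lemma~4.4]{BCR} delivers the required exponential moment on the scale $n/(\log n)^2$ in one step --- no recursion needed. For the cross-terms, the paper does not decompose $\chi$ at all; it bounds $\chi \le \alpha_{l,j}$ (drop the union with the other half in each non-intersection probability), uses the subadditivity-based Lemma~\ref{lem:weaktostrong} to obtain an exponential moment for each $\alpha_{l,j}$, and then follows the argument of \cite[Theorem~5.4]{BCR} to assemble $\sum_l\sum_j \alpha_{l,j}$. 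The upper and lower tails are also treated asymmetrically, exploiting the positivity of $\Lambda_l$ for the upper tail.
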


\begin{proof}
We will consider proving this when $n$ is a power of $2$. By changing the constant $C$ that appears on the right hand side of \eqref{eq:aprior}, one can use our subdivision formula of $R_n$ in order to obtain estimates on general $n$ via a binary decomposition in terms of powers of $n$.

Now, assume $n$ is power of $2$ and let $L = 4 \log( \log n) $. We can decompose $r_n$ iteratively $L$ times to notice that,
\begin{equation*}
\ca(\mathcal{S}) 
= \sum_{i=1}^{2^L} \ca (\mathcal{S}^{(i),m_L}) 
-\sum_{l=1}^L \Lambda_l + \epsilon_L,
\end{equation*}
where we use the notation from the proof of Theorem \ref{m1}. This time $\epsilon_L$ can be shown to be of $O((\log n)^{10})$. (There will at most $1+2 +4 + \ldots + 2^L = O((\log n)^4)$ many error  terms of the form $\epsilon$ in the decomposition. Each of these error terms has moment $O((\log n)^2)$.) 
By applying Chebyshev's inequality, we see that the error term $\epsilon_L$ provides no change to the probability at the scale $b_n$. 
Thus, we freely drop this error term $\epsilon_L$ in what follows.

\textit{Bounding Upper tails of $\ca(\mathcal{S}) - \mathbb{E}[\ca(\mathcal{S})] $}

If one wants to bound the probability $\mathbb{P}( \ca(\mathcal{S}) - \mathbb{E}[\ca(\mathcal{S})]  \ge \frac{\lambda n b_n}{(\log n)^2})$ from above, then since all the terms $\Lambda_l$ are positive and $\sum_{i=1}^L \mathbb{E}[\Lambda_l] = O(\frac{n}{(\log n)^2})$, it suffices to bound the probability,
\begin{equation*}
\mathbb{P}\left(\sum_{i=1}^{2^L} \ca(\mathcal{S}^{(i),m_L}) - \mathbb{E}[\ca(\mathcal{S}^{(i),m_L})] \ge \frac{\lambda n b_n}{(\log n)^2} \right).
\end{equation*}
Now, the sequence $\ca(\mathcal{S}^{(i),m_L}) - \mathbb{E}[\ca(\mathcal{S}^{(i),m_L})]$ are the sequence of i.i.d. random variables with the property that $\mathbb{
E}\exp\left[ \frac{\theta}{n}|\ca(\mathcal{S}) - \mathbb{E}[\ca(\mathcal{S})] |\right] <\infty$. (This is due the the fact that $\ca(\mathcal{S}) \le n$.)  
We can apply \cite[Lemma 4.4]{BCR} to assert that there is some constant $\theta> 0$ such that
$$
\limsup_{n \to \infty}\mathbb{E}\left[ \exp\left[ \frac{\theta}{2^{L/2}} \left|\sum_{i=1}^{2^L}  \frac{2^L}{n} (\ca(\mathcal{S}^{(i),m_L}) -\mathbb{E}[\ca(\mathcal{S}^{(i),m_L})] ) \right| \right]\right] <\infty.
$$
Since $2^{L/2} \ge (\log n)^2$ by choice, this implies that,
$$
\limsup_{n \to \infty}\mathbb{E}\left[ \exp\left[\theta \frac{(\log n)^2}{n} \left|\sum_{i=1}^{2^L}  (\ca(\mathcal{S}^{(i),m_L}) -\mathbb{E}[\ca(\mathcal{S}^{(i),m_L})] ) \right| \right]\right] <\infty.
$$
By Chebyshev's inequality, this shows that there is some constant $C$ such that,
\begin{equation} \label{eq:absval}
\limsup_{n \to \infty}\frac{1}{b_n}
\log 
\mathbb{P} \bigg( \left|\sum_{i=1}^{2^L}  (\ca(\mathcal{S}^{(i),m_L}) -\mathbb{E}[\ca(\mathcal{S}^{(i),m_L})] ) \right| \ge \frac{ \lambda n b_n}{(\log n)^2} \bigg) \le -C \lambda.
\end{equation}

\textit{Upper Bounds on the lower tail of $\ca(\mathcal{S}) - \mathbb{E}[\ca(\mathcal{S})] $}

Due to our control on $\left|\sum_{i=1}^{2^L}  (\ca(\mathcal{S}^{(i),m_L}) -\mathbb{E}[\ca(\mathcal{S}^{(i),m_L})] ) \right|$ from equation \eqref{eq:absval}. It suffices to bound $\mathbb{P}\left(\sum_{l =1}^L \Lambda_l \ge \frac{\lambda n b_n}{(\log n)^2} \right)$. 

If we define,
\begin{equation*}
\begin{aligned}
 &\alpha_{l,j}:=\\
 &\sum_{a= (2j-2) m_l+1}^{(2j-1) m_l} \sum_{b = (2j-1) m_l +1}^{(2j) m_l} \mathbb{P}(R'_{\mathcal{S}_a} \cap \mathcal{S}^{(2j-1),m_l} = \emptyset) G_D(\mathcal{S}_a - \mathcal{S}_b) \mathbb{P}(R'_{\mathcal{S}_b} \cap \mathcal{S}^{(2j),m_l} = \emptyset),
\end{aligned}
\end{equation*}
notice that we can bound,
\begin{equation*}
\chi(\mathcal{S}^{(2j-1),m_l}, \mathcal{S}^{(2j),m_l}) \le \alpha_{l,j}.
\end{equation*}

Finally, in order to show {$\limsup_{n \to \infty}\frac{1}{b_n} \log \mathbb{P}(\sum_{l=1}^L \Lambda_l \ge \frac{\lambda {n} b_n}{(\log n)^2}) \le - C \lambda$} for some constant $C$. It suffices to prove that the exponential moment,
\begin{equation} \label{eq:desiredweakmom}
\limsup_{n \to \infty}\mathbb{E}\left[ \exp\left[\frac{\theta (\log n)^2}{n} \sum_{l=1}^L \sum_{k=1}^{2^{l-1}} \alpha_{l,k} \right] \right] < \infty.
\end{equation}

By the consequence of Lemma \ref{lem:weaktostrong}, there is a parameter $\theta>0$ such that each $\alpha_{l,k}$ has the exponential moment,
\begin{equation*}
\limsup_{n \to \infty}\mathbb{E}\left[\exp\left[\theta \frac{ (\log (2^{-l} n))^2}{2^{-l}n} \alpha_{l,k} \right]\right]< \infty. 
\end{equation*}
Thus, we can follow the argument of  \cite[Theorem 5.4]{BCR} from equation (5.30) onwards to prove that the desired result \eqref{eq:desiredweakmom}. 
This completes the proof of the lemma. 
\end{proof}

\section{ Theorem \ref{thm:chi}: Large Deviations of the Cross Term} \label{sec:introdecomp}

In this section, we provide a decomposition for $\chi$ that will give us a proof of Theorem \ref{thm:chi}. 
Analyzing $\chi$ is not directly tractable due to the lack of symmetry in each individual product. 
Recall that $\mathcal{S}^1,\mathcal{S}^2$ are independent random walks of duration $n$. 
To deal with this issue, we can write this in terms of the following difference,
\begin{equation} \label{eq:decompchi}
\begin{aligned}
\chi &=\chi(\mathcal{S}^1,\mathcal{S}^2)\\
&=  2\sum_{x^1 \in \mathcal{S}^1} \sum_{x^2 \in \mathcal{S}^2} \mathbb{P}(R'_{x^1} \cap \mathcal{S}^1= \emptyset) G_D(x^1 - x^2) \mathbb{P}(R'_{x^2} \cap  \mathcal{S}^2 = \emptyset)\\
&-  \sum_{x^1 \in \mathcal{S}^1} \sum_{x^2 \in \mathcal{S}^2} \mathbb{P}(R'_{x^1}\cap \mathcal{S}^1= \emptyset) G_D(x^1- x^2) \mathbb{P}(R'_{x^2} \cap \mathcal{S}^2= \emptyset, R'_{x^2}\cap \mathcal{S}^1 \ne \emptyset )\\&-  \sum_{x^1 \in \mathcal{S}^1} \sum_{x^2 \in \mathcal{S}^2} \mathbb{P}(R'_{x^1}\cap \mathcal{S}^1= \emptyset, R'
_{x^1} \cap \mathcal{S}^2 \ne \emptyset) G_D(x^1- x^2) \mathbb{P}(R'_{x^2} \cap \mathcal{S}^2= \emptyset).
\end{aligned}
\end{equation}

Now, in order to obtain asymptotics on $\chi$, our goal is two-fold. 
\begin{enumerate}
\item  An upper bound on $\chi$ is found by merely considering the top line
\begin{equation}\label{eq:defTLn}
TL_n:=\sum_{x^1 \in \mathcal{S}^1} \sum_{x^2 \in \mathcal{S}^2} \mathbb{P}(R'_{x^1} \cap \mathcal{S}^1= \emptyset) G_D(x^1 - x^2) \mathbb{P}(R'_{x^2} \cap  \mathcal{S}^2 = \emptyset).
\end{equation}

Thus, one can obtain upper bounds on the moderate deviation statistics of $\chi$ merely from analyzing the moderate deviation statistics of $TL_n$.

\item Obtaining lower bounds on the moderate deviation statistics of $\chi$ needs more steps. First, one needs to show that the second line of \eqref{eq:decompchi}, which we denote as $\chi'$ is sub-leading relative to the first line. (The analysis of the third line would be similar to that of the second.)
\begin{equation} \label{eq:defchipr}
\chi':=\sum_{x^1 \in \mathcal{S}^1} \sum_{x^2 \in \mathcal{S}^2} \mathbb{P}(R'_{x^1} \cap \mathcal{S}^1= \emptyset) G_D(x^1- x^2) \mathbb{P}(R'_{x^2} \cap \mathcal{S}^2= \emptyset, R'_{x^2}\cap \mathcal{S}^1 \ne \emptyset ).
\end{equation}
Once this is established, lower bounds on the large deviation statistics of $\chi$ will be the same as those of $TL_n$. Furthermore, observe that only an upper bound on $\chi'$ is necessary.
\end{enumerate}

We will have two intermediate goals,
\begin{thm} \label{thm:TL}
Recall $TL_n$ as in equation \eqref{eq:defTLn}.  Fix $b_n= O(\log \log n)$ with $\lim_{n \to \infty} b_n = \infty$. We have that, for any $\lambda>0$, 
\begin{equation*}
\lim_{n \to \infty}\frac{1}{b_n} \log  \mathbb{P}\left( TL_n \ge \lambda \frac{  b_n n}{(\log n)^2} \right) = - 2 I_4(\lambda). 
\end{equation*}
\end{thm}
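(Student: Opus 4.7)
The plan is to exploit the convolutional factorisation $G_D = \tilde G_D \ast \tilde G_D$ of the four-dimensional lattice Green's function to recast $TL_n$ as a symmetric bilinear pairing of two independent single-walk functionals:
\[
TL_n \;=\; \sum_{a \in \mathbb{Z}^4} F(\mathcal{S}^1, a)\,F(\mathcal{S}^2, a), \qquad F(\mathcal{S}, a) \;:=\; \sum_{y \in \mathcal{S}} \mathbb{P}(R'_y \cap \mathcal{S} = \emptyset)\,\tilde G_D(y-a).
\]
Directly from $G_D = \tilde G_D \ast \tilde G_D$ one obtains $\sum_a F(\mathcal{S},a)^2 = SL_n$, the single-walk quantity introduced elsewhere in the paper, so the analysis of $TL_n$ is tightly coupled to that of $SL_n$.

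For the upper bound I would apply Young's inequality $2\langle F_1,F_2\rangle \le \|F_1\|_2^2 + \|F_2\|_2^2$, yielding the pointwise deterministic bound $2\,TL_n \le SL_n(\mathcal{S}^1) + SL_n(\mathcal{S}^2)$. Taking as given the moderate-deviation rate $I_4(\lambda)$ for $SL_n$ (established by a direct Feynman--Kac / generalised Gagliardo--Nirenberg variational analysis along the lines of \cite{BCR,BK02}), the independence of $\mathcal{S}^1$ and $\mathcal{S}^2$ combined with the linearity of $I_4$ in $\lambda$ delivers
\[
\limsup_{n\to\infty}\frac{1}{b_n}\log \mathbb{P}\Bigl(TL_n \ge \lambda\,\tfrac{b_n n}{(\log n)^2}\Bigr) \;\le\; \inf_{\mu_1+\mu_2 \ge 2\lambda}\bigl(I_4(\mu_1)+I_4(\mu_2)\bigr) \;=\; 2\,I_4(\lambda).
\]
Young's inequality is saturated precisely when $F(\mathcal{S}^1,\cdot)\equiv F(\mathcal{S}^2,\cdot)$, the very symmetric configuration realised in the lower-bound construction, so no slack is introduced at this step.

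For the lower bound I would use a change-of-measure argument driven by the generalised Gagliardo--Nirenberg extremiser. For a smooth compactly supported $g:\mathbb{R}^4\to\mathbb{R}$ with $\|g\|_{L^2}=1$, restrict both walks (rescaled diffusively at the moderate-deviation scale $\sqrt{n/b_n}$) to have empirical occupation profile close to $g^2$. A Donsker--Varadhan-type estimate bounds the cost of each walk by $(1+o(1))\,b_n\cdot \tfrac{c_0}{2}\|\nabla g\|_{L^2}^2$, with the two contributions adding by independence. On the conditioned event the convolutional decomposition isolates each non-intersection probability inside the single-walk factor $F(\mathcal{S}^i,a)$, so $\mathbb{P}(R'_y\cap \mathcal{S}=\emptyset)\sim \tfrac{\pi^2}{8\log n}$ may be invoked at the required precision, producing
\[
TL_n \;\approx\; \tfrac{\pi^4}{64(\log n)^2}\,n \iint_{(\mathbb{R}^4)^2} g^2(x)\,G(x-y)\,g^2(y)\,dx\,dy.
\]
Demanding $TL_n \ge \lambda\,\tfrac{b_n n}{(\log n)^2}$ forces $\iint g^2\,G\,g^2 \ge \tfrac{64\lambda}{\pi^4}$, and the Gagliardo--Nirenberg inequality $\iint g^2\,G\,g^2 \le \tilde\kappa(4,2)^4\,\|\nabla g\|_{L^2}^2$ constrains $\|\nabla g\|_{L^2}^2$ from below, with equality at the extremiser. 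Tracking the constants through the Donsker--Varadhan cost matches the rate $-2\,I_4(\lambda)$ obtained in the upper bound.

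The main obstacle, as the authors stress in the Strategy section, is the handling of the non-intersection probabilities $\mathbb{P}(R'_y\cap\mathcal{S}=\emptyset)$, which cannot be naively replaced by $\tfrac{\pi^2}{8\log n}$ in the large-deviation regime. The convolutional factorisation is precisely what makes such a replacement admissible: after it, each non-intersection probability lives inside a single-walk factor and the coupling between the two walks is carried purely by the dummy variable $a$. Executing these replacements with the correct leading-order constant (rather than a mere order-of-magnitude upper bound), and matching the upper and lower bound constants with the sharp Gagliardo--Nirenberg value $\tilde\kappa(4,2)$, is the most delicate part of the argument, closely analogous to the monotonicity-based analysis developed for $\chi'$ in Section \ref{sec:ProofofThmchipr}.
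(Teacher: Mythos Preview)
Your upper-bound argument has a fatal gap. The Young inequality $2\,TL_n \le SL_n(\mathcal{S}^1)+SL_n(\mathcal{S}^2)$ is correct algebraically but useless at the scale in question: while $\mathbb{E}[TL_n]\asymp n/(\log n)^2$, the self-pairing $SL_n$ satisfies $\mathbb{E}[SL_n]\asymp n/\log n$ (the double sum $\sum_{i,j}G_D(\mathcal{S}_i-\mathcal{S}_j)$ over a \emph{single} walk is of order $n\log n$, not $n$, because of the near-diagonal contribution $\mathbb{E}[G_D(\mathcal{S}_{|i-j|})]\asymp 1/|i-j|$). Hence $\mathbb{P}\bigl(SL_n^1+SL_n^2\ge 2\lambda\,nb_n/(\log n)^2\bigr)\to 1$, and no upper bound comes out. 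The version of the $SL_n$ large deviation you want to invoke (Corollary~\ref{cor:SLN}) is for the \emph{centered} quantity $SL_n-\mathbb{E}[SL_n]$; after centering, your Young inequality picks up the deterministic term $2\mathbb{E}[SL_n]\asymp n/\log n$, which again swamps the threshold. There is also a circularity: in the paper the $SL_n$ result is stated explicitly as a corollary obtained \emph{by the same proof} as Theorem~\ref{thm:TL}, so assuming it begs the question.

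The paper's route is genuinely different. It introduces an auxiliary $TL'_n$ by cutting each walk into $b_n$ pieces (so that the non-intersection probabilities refer only to the local segment), shows $TL_n$ and $TL'_n$ have the same deviations (Proposition~\ref{prop:TltoTlpr}), and then computes \emph{exact exponential moment asymptotics} for $TL'_n$ (Proposition~\ref{prop:asympmom}). The upper half of that computation rests on a hard a priori moment bound $\mathbb{E}[(TL'_n)^m]\le C^m m!\,(n/(\log n)^2)^m$ uniform in $m$ and $n$ (Claim~\ref{clm:Tlprbnd}), proved via a subadditivity-plus-induction argument that carefully decouples the non-intersection probabilities by conditioning on walk endpoints at well-separated times (Claim~\ref{claim:TL}); only after this bound is in hand can weak convergence to the Brownian functional be invoked. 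The lower half does use the convolutional square root as you propose, but through an explicit Feynman--Kac operator $\mathfrak{B}_n$ and a second-moment comparison to justify replacing $\mathbb{P}(R'_y\cap\mathcal{S}=\emptyset)$ by $\pi^2/(8\log n)$ at the level of the operator---this is the step your last paragraph correctly flags as delicate, but your sketch does not supply.
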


We remark that by following the same proof, we could obtain the following statement; this is analogous to our statement on $TL_n$ and \cite[Theorem 8.2.1]{Chenbook}, but uses the same random walk rather than two independent copies. 
\begin{cor} \label{cor:SLN}
Let $b_n = O(\log \log n)$ with $\lim_{n \to \infty} b_n = \infty$. 
Define $SL_n$ as,
\begin{equation} \label{def:SLN}
SL_n:= \sum_{x^1 \in \mathcal{S}} \sum_{x^2 \in \mathcal{S}} \mathbb{P}(R'_{x^1} \cap \mathcal{S}= \emptyset) G_D(x^1 - x^2) \mathbb{P}(R'_{x^2} \cap  \mathcal{S} = \emptyset).
\end{equation}
Then, we have that,
\begin{equation*}
\lim_{n \to \infty} \frac{1}{b_n} \log \mathbb{P} \left(  SL_n -\mathbb{E}[SL_n]  \ge \lambda \frac{b_n n}{(\log n)^2}\right) 
=- I_4(\lambda). 
\end{equation*}
\end{cor}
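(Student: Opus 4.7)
The plan is to run the proof of Theorem \ref{thm:TL} with the two independent walks $\mathcal{S}^1,\mathcal{S}^2$ identified as the single walk $\mathcal{S}$, while carefully tracking how the combinatorial and entropic factors change. The starting point is the same square-root-of-Green decomposition $G_D = \tilde{G}_D * \tilde{G}_D$, which in the single-walk setting gives the perfect-square representation
\begin{equation*}
SL_n = \sum_{a \in \mathbb{Z}^4} F_n(a)^2, \qquad F_n(a) := \sum_{x \in \mathcal{S}} \mathbb{P}(R'_x \cap \mathcal{S} = \emptyset)\,\tilde{G}_D(x - a).
\end{equation*}
This is the analogue of the symmetrized splitting used for $TL_n$ and ensures that the quadratic Brownian functional controlling the asymptotics is $\iint f^2(x) G(x-y) f^2(y)\,dx\,dy$, with $f^2$ the normalized occupation density of the single rescaled Brownian path. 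The generalized Gagliardo--Nirenberg inequality with constant $\tilde{\kappa}(4,2)$ therefore enters in the same way.

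For the upper bound I would estimate the exponential moment $\mathbb{E}[\exp(\theta (\log n)^2 SL_n / n)]$ by the Feynman--Kac argument of Theorem \ref{thm:TL}. The probability $\mathbb{P}(R'_x \cap \mathcal{S} = \emptyset)$ is replaced, up to subleading error controlled as in Section \ref{sec:ProofofThmchipr} via the monotonicity argument, by its typical value $\pi^2/(8 \log n)$. The resulting moment computation, after the usual rescaling and convergence to Brownian motion, converges to the variational expression $\sup_f\{\theta \iint f^2 G f^2 - \tfrac{1}{2}\|\nabla f\|_{L^2}^2 : \|f\|_{L^2}=1\}$, and Chebyshev combined with Legendre duality extracts the rate $I_4(\lambda)$. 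The lower bound would follow by a change of measure that biases the walk so its empirical occupation density concentrates near an extremizer $f_*^2$ of the Gagliardo--Nirenberg variational problem, chosen so that the quadratic functional equals $\lambda$; the entropic cost of this tilt at the moderate-deviation scale is $\tfrac{1}{2} b_n \|\nabla f_*\|_{L^2}^2$, which after optimization yields the claimed $I_4(\lambda)$.

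The main obstacle is accounting for the single factor of two distinguishing the single-walk rate $I_4(\lambda)$ from the two-walk rate $2 I_4(\lambda)$ of Theorem \ref{thm:TL}. In Theorem \ref{thm:TL} the two independent walks each pay an independent Dirichlet cost $\tfrac{1}{2} b_n \|\nabla f_*\|_{L^2}^2$, producing total cost $2 I_4(\lambda)$; for $SL_n$ only one copy is paid. This halving must be tracked consistently through both the Legendre transform in the upper bound and the change of measure in the lower bound. One convenient way to package both directions simultaneously is to iterate the dyadic decomposition $\mathcal{S} = \bigcup_{i=1}^{2^L} \mathcal{S}^{(i),m_L}$ already used in the proof of Theorem \ref{m1}, write $SL_n$ as a sum of $2^L$ self-terms $SL_{n 2^{-L}}$ plus $L$ layers of cross terms that match $2\,TL$ up to errors of the type handled in Section \ref{sec:ProofofThmchipr}, and then invoke Theorem \ref{thm:TL} on each layer. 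Sending $L \to \infty$ and using Lemma \ref{lem:aprioribound} to absorb the self-terms delivers the upper bound; the lower bound follows by constructing the tilt on a single block of length $n 2^{-L}$ and propagating. Once the factor-of-two bookkeeping is carried out, no genuinely new estimates are required beyond those of Theorem \ref{thm:TL} and the error analysis of $\chi'$.
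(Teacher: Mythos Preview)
Your overall plan---rerun the machinery of Theorem~\ref{thm:TL} with the two walks identified, and track the factor of two in the Dirichlet cost---is exactly what the paper intends; it cites \cite[Theorem~8.2.1]{Chenbook}, which is the analogous passage from mutual to self-intersection in the $d=2$ volume problem.  However, your first route has a real gap: you propose to control $\mathbb{E}\bigl[\exp\bigl(\theta(\log n)^2 SL_n/n\bigr)\bigr]$, but this diverges.  Unlike $TL_n$, whose expectation is $O(n/(\log n)^2)$ and hence $o(nb_n/(\log n)^2)$, the single-walk quantity has $\mathbb{E}[SL_n]$ of order $n/\log n$: the near-diagonal pairs contribute $\sum_{k=1}^n (n-k)\,\mathbb{E}[G_D(\mathcal{S}_k)]/(\log n)^2\asymp n\log n/(\log n)^2$, reflecting the fact that the Brownian analogue $\int_0^1\int_0^1 G(B_s-B_t)\,ds\,dt$ has infinite expectation and only the renormalized $\gamma_G$ makes sense.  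So the centering $-\mathbb{E}[SL_n]$ in the statement is not cosmetic; without it the event $\{SL_n\ge \lambda nb_n/(\log n)^2\}$ is not even rare, and the exponential-moment route collapses.

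The fix is precisely the content of your second route, and it is what the paper (via Chen) actually does: split $\mathcal{S}$ into $b_n$ pieces and separate the diagonal self-blocks from the off-diagonal cross-terms $\sum_{i\ne j} TL(\mathcal{S}^{,i},\mathcal{S}^{,j})$.  The off-diagonal part has expectation $O(n/(\log n)^2)$ just like $TL_n$, so no centering is needed there and the exponential-moment/Feynman--Kac analysis of Section~\ref{sec:TL} goes through with the single-walk factor of two; the centered diagonal blocks are then absorbed by an a~priori estimate.  Two small corrections: the paper works directly with $b_n$ pieces (as for $TL'_n$) rather than a dyadic $L\to\infty$ limit; and Lemma~\ref{lem:aprioribound} concerns $\ca(\mathcal{S})$, not $SL$, so you would instead need the single-walk analogue of Claim~\ref{clm:Tlprbnd} to control the self-blocks.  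Finally, in the paper the Feynman--Kac operator $\mathfrak{B}_n$ produces the \emph{lower} bound on the moment asymptotics; the upper bound comes from the a~priori moment estimate together with convergence in distribution.
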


\begin{thm}\label{thm:chipr}
Recall $\chi'$ as in \eqref{eq:defchipr}. Fix $b_n=O(\log \log n)$ with $\lim_{n \to \infty} b_n = \infty$.  For any $\epsilon>0$, we have that,
\begin{equation*}
\lim_{n \to \infty} \frac{1}{b_n} \log\mathbb{P}\left( \chi' \ge \epsilon \frac{b_n n}{(\log n)^2} \right) =-\infty.
\end{equation*}
\end{thm}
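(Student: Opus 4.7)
The plan is to obtain polynomial moment bounds of the form
\[
\mathbb{E}[(\chi')^k] \leq C^k k^{c k} \Bigl(\frac{n}{(\log n)^3}\Bigr)^k
\]
with $C,c>0$ independent of $k$ and $n$, valid for $k$ up to $\simeq b_n\log n$. Markov's inequality with the choice $k\asymp \epsilon b_n\log n$ then yields $\mathbb{P}(\chi'\ge \epsilon b_n n/(\log n)^2)\le \exp(-c'\epsilon b_n\log n)$, so that $b_n^{-1}\log\mathbb{P}\le -c'\epsilon \log n\to -\infty$. Since the typical scale of $\chi'$ is a full $\log n$ factor below the target scale $b_n n/(\log n)^2$, polynomial (rather than exponential) moment control is comfortably sufficient.

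The first step is to represent the asymmetric probability $\mathbb{P}(R'_{x^2}\cap \mathcal{S}^2=\emptyset,\,R'_{x^2}\cap \mathcal{S}^1\ne\emptyset)$ in a form fit for moment computation. By the classical last-exit identity, applied conditionally on $\mathcal{S}^1$ to $A=\mathcal{S}^1$,
\[
\mathbb{P}(R'_{x^2}\cap \mathcal{S}^1\ne\emptyset)=\sum_{y\in \mathcal{S}^1}G_D(x^2-y)\,\mathbb{P}(R'_y\cap \mathcal{S}^1=\emptyset),
\]
and a negative-correlation (FKG-style) argument between the avoidance event $\{R'_{x^2}\cap \mathcal{S}^2=\emptyset\}$ and the hitting event $\{R'_{x^2}\cap \mathcal{S}^1\ne\emptyset\}$ (conditional on $\mathcal{S}^1,\mathcal{S}^2$) allows the factor $\mathbb{P}(R'_{x^2}\cap \mathcal{S}^2=\emptyset)$ to be retained, yielding (up to a universal multiplicative constant)
\[
\chi'\le \sum_{\substack{x^1,y\in \mathcal{S}^1\\ x^2\in \mathcal{S}^2}} \mathbb{P}(R'_{x^1}\cap \mathcal{S}^1=\emptyset)\,\mathbb{P}(R'_{x^2}\cap \mathcal{S}^2=\emptyset)\,G_D(x^1-x^2)\,G_D(x^2-y)\,\mathbb{P}(R'_y\cap \mathcal{S}^1=\emptyset).
\]
Relative to $TL_n$ we have gained an extra Green's function $G_D(x^2-y)$ and an extra non-intersection factor. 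In $d=4$ the bubble bound $\sum_z G_D(z)G_D(a-z)G_D(z-b)\lesssim \sqrt{G_D(a)G_D(b)}$ shows that the extra $G_D$ does not blow up (the triple sum has expectation $\lesssim n$, the same as the double sum), and the extra $1/\log n$ factor is supplied by the extra non-intersection probability.

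The second step replaces the three $\mathcal{S}^i$-measurable non-intersection probabilities by their typical values $\sim\pi^2/(8\log n)$. This is the delicate point: in the moderate deviation regime $\mathcal{S}^1$ may reorganize atypically so that $\mathbb{P}(R'_{x^1}\cap \mathcal{S}^1=\emptyset)$ is substantially larger than its mean, and a blind replacement is not valid. Exploiting the monotonicity $A\mapsto \mathbb{P}(R'_x\cap A=\emptyset)$, each probability is bounded above by the corresponding probability with respect to a short sub-block of $\mathcal{S}^1$ surrounding the time-index of the relevant point; on sub-blocks of length $m\ll n$ the probability has mean $\sim 1/\log m\asymp 1/\log n$ and sub-Gaussian concentration in the sense of Lemma \ref{lem:weaktostrong} and the tail bounds underlying Lemma \ref{lem:aprioribound}. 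The combinatorial $k$-th moment of the residual Green's function sum is then computed by expanding as a $k$-fold time integral, summing first over the $\mathcal{S}^2$-indices via the bubble estimate (reducing each $\mathcal{S}^2$-linked pair to $\sqrt{G_D(x^1_j)G_D(y_j)}$), and iterating the single-walk moment bound $\mathbb{E}\sum_{s,u\le n}\sqrt{G_D(B_s)G_D(B_u)}\lesssim n$; this gives a combinatorial factor $C^k k^{ck}n^k$, which combined with the $(\log n)^{-3k}$ from the three non-intersection factors produces the desired moment estimate.

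The main obstacle is the simultaneous replacement in Step 2: when computing the $k$-th moment, the two $\mathcal{S}^1$-non-intersection probabilities appear $2k$ times, and one has to decouple these copies from the $k$ time-indices on $\mathcal{S}^1$ appearing in the Green's function factors. A careful multi-scale/dyadic decomposition of $\mathcal{S}^1$ is needed so that each of the time-indices in the moment expansion lies in a sub-block disjoint from those used for the non-intersection probabilities, with the resulting loss controlled by the super-exponentially small concentration already available from Lemma \ref{lem:aprioribound} applied to each sub-block. Once this decoupling is carried out, assembling the pieces yields the polynomial moment bound above and hence, via Markov, the super-exponential decay $b_n^{-1}\log\mathbb{P}\to -\infty$ claimed in Theorem \ref{thm:chipr}.
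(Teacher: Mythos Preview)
Your high-level intuition is right: $\chi'$ is of order $n/(\log n)^3$, one full $\log n$ below the scale $b_n n/(\log n)^2$, and fixed-$m$ moment bounds plus Markov suffice (this is exactly how the paper argues in Corollary~\ref{cor:mtn}). But the execution has a genuine gap at the first step.

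The ``FKG-style'' factorization
\[
\mathbb{P}(R'_{x^2}\cap\mathcal{S}^2=\emptyset,\ R'_{x^2}\cap\mathcal{S}^1\ne\emptyset)\ \lesssim\ \mathbb{P}(R'_{x^2}\cap\mathcal{S}^2=\emptyset)\cdot \mathbb{P}(R'_{x^2}\cap\mathcal{S}^1\ne\emptyset)
\]
is not justified: the two events are about the \emph{same} walk $R'$, neither is monotone in a common partial order, and no BK/FKG inequality applies. Without that extra factor you only extract $(\log n)^{-2}$ instead of $(\log n)^{-3}$, and the argument collapses. The paper's exact last-exit identity \eqref{eq:probdecomp} keeps the \emph{restricted} Green's function $G_{\mathcal{S}^2}(x^2,y)$, and the third $1/\log n$ is then extracted via the matrix inversion \eqref{eq:decompose}: when $y$ is far from a short sub-block $\mathcal{S}^2_{\beta,j}$, the vector $(G_D(a-y))_{a\in\mathcal{S}^2_{\beta,j}}$ is nearly constant, so $(\mathcal{G}^{\mathcal{S}^2_{\beta,j}})^{-1}$ applied to it produces the escape probabilities $\mathbb{P}(R'_{a}\cap\mathcal{S}^2_{\beta,j}=\emptyset)$, yielding $G_{\mathcal{S}^2_{\beta,j}}(x^2,y)\approx \mathbb{P}(R'_{x^2}\cap\mathcal{S}^2_{\beta,j}=\emptyset)\,G_D(\tilde{x}^2_j-y)$. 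This is the precise, conditional version of what you are trying to get by FKG, but it only holds when $y$ is far from the block---hence the sub-block decomposition $\mathcal{S}^2=\bigcup_j \mathcal{S}^2_{\beta,j}$, the indicator cutoffs $\mathcal{I}(\cdot,j)$, and the separate treatment of the near-diagonal error $\mathcal{E}_3$.

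Two secondary issues: (i) the ``bubble bound'' $\sum_z G_D(z)G_D(a-z)G_D(z-b)\lesssim\sqrt{G_D(a)G_D(b)}$ is not a standard $d=4$ inequality, and the claim that the triple sum has expectation $\lesssim n$ is off---the paper's Lemma~\ref{iterated:bound} gives $\mathbb{E}[(MT'_n)^m]\le C_m n^m(\log\log n)^{2m}$, with the $(\log\log n)$ factors coming from near-criticality; (ii) taking $k\asymp b_n\log n$ in Markov with a bound $C^k k^{ck}$ is delicate---the paper instead uses fixed $m$, obtaining $\limsup_n b_n^{-1}\log\mathbb{P}\le -m\cdot c$ for each $m$ (using $b_n=O(\log\log n)$), then sends $m\to\infty$. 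This is simpler and avoids tracking the $m$-dependence of $C_m$. Your ``decoupling'' idea for replacing the non-intersection probabilities by $O(1/\log n)$ is along the right lines and is carried out in the paper as Claim~\ref{claim:TL}, by conditioning on the walk at the endpoints of short $n^\alpha$-neighborhoods so that the non-intersection events become independent.
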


It is clear that Theorem \ref{thm:chi} is a consequence of Theorems \ref{thm:TL} and \ref{thm:chipr} along with the decomposition \eqref{eq:decompchi}. The next few sections will be devoted to proving these theorems.

\section{Controlling The Third Order Intersections: Proof of Theorem \ref{thm:chipr}} \label{sec:ProofofThmchipr} 

For any $A$, let
$$
G_A(a,b)= \sum_{m=0}^{\infty}\mathbb{P}^a(\mathcal{S}_m =b, \mathcal{S} (0,m) \cap A = \emptyset).
$$
Here, $G_A(a,b)$ is a restricted Green's function. 
Via a path decomposition, one can see that we have the following expression for the more complicated probability term in $\chi'$,
\begin{equation} \label{eq:probdecomp}
\begin{aligned}
\mathbb{P}(R'_{x^2} \cap \mathcal{S}^2= \emptyset, R'_{x^2}\cap \mathcal{S}^1 \ne \emptyset )
= \sum_{x^1 \in \mathcal{S}^1}   G_{\mathcal{S}^2}(x^2 ,  x^1)  \mathbb{P}(R'_{x^1} \cap (\mathcal{S}^1 \cup \mathcal{S}^2) =\emptyset).
\end{aligned}
\end{equation}
It computes the total sum of all probabilities of random walk paths from $a$ to $b$ that do not intersect $A$. 

The equality in \eqref{eq:probdecomp} comes from a path decomposition. 
Namely, since $R'_{x^2} \cap \mathcal{S}^1 \ne \emptyset$, then the random walk $R'_{x^2}$ must intersect $\mathcal{S}^1$ at some final point $x^1$. After this point, the random walk starting from $x^1$ must not intersect either one of $\mathcal{S}^1$ or $\mathcal{S}^2$.
Then, we can further bound 
\begin{equation*}
\chi' \le \sum_{x^1_1,x^1_2 \in \mathcal{S}^1} \sum_{x^2 \in \mathcal{S}^2} \mathbb{P}(R'_{x^1_1} \cap \mathcal{S}^1= \emptyset) G_D(x^1_1 - x^2) G_{\mathcal{S}^2}(x^2,x^1_2)\mathbb{P}(R'_{x^1_2} \cap \mathcal{S}^1 = \emptyset).
\end{equation*}



Though we have simplified the probability term in question, we are still not ready to analyze this due to the appearance of the term $G_{\mathcal{S}^2}$. We have to introduce a more sophisticated analysis in order to deal with this term. 
First we fix parameter $\beta <1$, the specific value will be chosen later in accordance with what is appropriate for later upper bounds. 
It is important to decompose the walk $\mathcal{S}^2$ into appropriate intervals of size $n^{\beta}$.  We define,
\begin{equation*}
\mathcal{S}^2_{\beta,j} :=  \mathcal{S}^2[(j-1) n^{\beta}, j n^\beta]. 
\end{equation*}

By adding back points when necessary, and also using the fact that if $A \subset B$ then $G_{B}(x,y) \le G_{A}(x,y)$ for all points $x$ and $y$, we see that we have,
\begin{equation*}
\begin{aligned}
\chi' \le \chi'_{\beta}&:= \sum_{x^1_1, x^1_2 \in \mathcal{S}^1} \sum_{j=1}^{ n^{1-\beta} } \sum_{x^2 \in \mathcal{S}^2_{\beta,j}} \mathbb{P}(R'_{x^1_1} \cap \mathcal{S}^1 = \emptyset) G_D(x^1_1 - x^2) \\
& \hspace{2cm} \times G_{\mathcal{S}^2_{\beta,j}}(x^2, x^1_2) \mathbb{P}(R'_{x^1_2} \cap \mathcal{S}^1 = \emptyset).
\end{aligned}
\end{equation*}
(Note that $\chi'$ or $\chi'_{\beta}$ is a random variable determined by $\mathcal{S}^1$ and $\mathcal{S}^2$.)



One way to express the modified Green's function  $G_{\mathcal{S}^2_{\beta,j}}$ is as follows. Note that Green's function satisfies the following system of equations: 
for any $x \in \mathcal{S}^2_{\beta,j}$ and $m\ge 0$,
\begin{equation*}
\mathbb{P}^x(\mathcal{S}_m=x^1_2) 
=\sum_{i=0}^m \sum_{\tilde{x} \in \mathcal{S}^2_{\beta,i}} \mathbb{P}^x(\mathcal{S}_i=\tilde{x}) 
\mathbb{P}^{\tilde{x}}(\mathcal{S}(0,m-i) \cap \mathcal{S}^2_{\beta,j}=\emptyset ,\mathcal{S}_{m-i}=x^1_2)
\end{equation*}
and hence
\begin{equation*}
G_D(x- x^1_2) =\sum_{\tilde{x} \in \mathcal{S}^2_{\beta,i}} G_D(x- \tilde{x}) G_{\mathcal{S}^2_{\beta,j}}(\tilde{x}, x^1_2).
\end{equation*}


If we define the matrix with size $|\mathcal{S}^2_{\beta,j}|$
\begin{equation} \label{eq:defcalG}
[\mathcal{G}^{\mathcal{S}^2_{\beta,j}}]_{a,b} = G_D(a -b) ,\quad \text{ for } a,b \in \mathcal{S}^2_{\beta,j},
\end{equation}
we see that,
\begin{equation}  \label{eq:inveq}
\begin{bmatrix}
G_{\mathcal{S}^2_{\beta,j}}(a_1, x^1_2)\\
G_{\mathcal{S}^2_{\beta,j}}(a_2, x^1_2)\\
\vdots\\
G_{\mathcal{S}^2_{\beta,j}}(a_{|\mathcal{S}^2_{\beta,j}|}, x^1_2)
\end{bmatrix} = (\mathcal{G}^{\mathcal{S}^2_{\beta,j}})^{-1} \begin{bmatrix}
G_D(a_1, x^1_2)\\
G_D(a_2, x^1_2)\\
\vdots\\
G_D(a_{|\mathcal{S}^2_{\beta,j}|},x^1_2)
\end{bmatrix}, 
\end{equation}
where $a$ varies over all the points in $\mathcal{S}^2_{\beta,j}$.



The analysis of the matrix inverse will depend on the distance between the points in $x^1_2$  and the set $\mathcal{S}^2_{\beta,j}$. Observe that if $x^1_2$ were far away from the set $\mathcal{S}^2_{\beta,j}$, then the terms in the vector on the right hand side of equation \eqref{eq:inveq} would approximately be constant. Furthermore, it is also rather unlikely that the $x^1_2$ would be close to the set $\mathcal{S}^2_{\beta,j}$. Following this intuition, we divide the the points $x^1_2$ into two categories,
\begin{enumerate}
\item In category 1, the point $x^1_2$ is of distance at least $ \sqrt{n}^{1-\delta}$ away from all the points in $\mathcal{S}^2_{\beta,j}$.

\item In category 2, the point $x^1_2$ is not of distance at least  $ \sqrt{n}^{1-\delta}$ away from some point in $\mathcal{S}^2_{\beta,j}$

\end{enumerate} 

If we are in the first category, we have a superior analysis, as will be illustrated by the following manipulations. 
Indeed, assume that the point $x^1_2$ is of distance at least $\sqrt{n}^{1-\delta}$ from all points in $\mathcal{S}^2_{\beta,j}$. Now, let $a$ and $b$ be two points in $\mathcal{S}^2_{\beta,j}$. Then, we must have 
\begin{equation} \label{eq:diffnbd}
\begin{aligned}
&\left|G_D(a -x^1_2) - G_D(b - x^1_2)\right| 
\lesssim \left| \frac{1}{\|a-x^1_2\|^2}  - \frac{1}{\|b-x^1_2\|^2}\right| + \frac{1}{\|a-x^1_2\|^4} + \frac{1}{\|
b-x^1_2\|^4}\\
& \lesssim  \frac{\| a - b\|}{\|a - x^1_2\|^{3} } + 2 \left(\frac{1}{\sqrt{n}^{1-\delta}} \right)^4 \lesssim \frac{n^{\beta}}{ \left(\sqrt{n}\right)^{3- 3\delta}}.
\end{aligned}
\end{equation}
Note that here, we have applied the estimates of \cite[Theorem 4.3.1]{LL10}. 
Then we use the fact that $\|a - b\|$ is less than $n^{\beta}$ when both are in the neighborhood of $\mathcal{S}^2_{\beta,j}$ and our assumption that $\|x^1_2 - a\| \ge \left(\sqrt{ n}\right)^{1-\delta}$. 
Let
\begin{equation*}
\begin{aligned}
\begin{bmatrix}
E_{a_1,x^1_2}
\\
\vdots\\
E_{a_{|\mathcal{S}^2_{\beta,j}|},x^1_2}
\end{bmatrix}
:=(\mathcal{G}^{\mathcal{S}^2_{\beta,j}})^{-1} 
\begin{bmatrix}
G_D(a_1- x^1_2) - G_D(a_1 - x^1_2)  \\
\vdots\\
G_D(a_{|\mathcal{S}^2_{\beta,j}|}- x^1_2) - G_D(a_1 - x^1_2)
\end{bmatrix}
.
\end{aligned}
\end{equation*}
In this case, we can further write the inverse formula as in equation \eqref{eq:inveq} as,
\begin{equation}  \label{eq:decompose}
\begin{aligned}
&\begin{bmatrix}
G_{\mathcal{S}^2_{\beta,j}}(a_1, x^1_2)\\
\vdots\\
G_{\mathcal{S}^2_{\beta,j}}(a_{|\mathcal{S}^2_{\beta,j}|}, x^1_2)
\end{bmatrix} = (\mathcal{G}^{\mathcal{S}^2_{\beta,j}})^{-1} \begin{bmatrix}
G_D(a_1- x^1_2)\\
\vdots\\
G_D(a_{|\mathcal{S}^2_{\beta, \alpha,j}|}- x^1_2)
\end{bmatrix}\\&  =  (\mathcal{G}^{\mathcal{S}^2_{\beta,j}})^{-1} \begin{bmatrix}1 \\ \vdots \\ 1 \end{bmatrix} \times G_D(a_1 - x^1_2) +  (\mathcal{G}^{\mathcal{S}^2_{\alpha,\beta,j}})^{-1}  \begin{bmatrix}
G_D(a_1-  x^1_2) - G_D(a_1 -x^1_2)  \\
\vdots\\
G_D(a_{|\mathcal{S}^2_{\beta,j}|}-  x^1_2) - G_D(a_1 - x^1_2)
\end{bmatrix}
\\& = \begin{bmatrix} 
\mathbb{P}(R'_{a_1} \cap \mathcal{S}^2_{\beta,j} = \emptyset) \\
\vdots\\
\mathbb{P}(R'_{a_{|\mathcal{S}^2_{\beta,j}|}} \cap \mathcal{S}^2_{\beta,j} =\emptyset)
\end{bmatrix} \times G_D(a_1 - x^1_2) + 
\begin{bmatrix}
E_{a_1,x^1_2}
\\
\vdots\\
E_{a_{|\mathcal{S}^2_{\beta,j}|},x^1_2}
\end{bmatrix}
.
\end{aligned}
\end{equation}
Thus, we have the following representation of $\chi'_{\beta}$.  In what follows, we let $\mathcal{I}(y,j)$ be the indicator function of the event  
\begin{equation*}
\mathcal{I}(y,j):=   \mathbbm{1}[\text{dist}(y, \mathcal{S}^2_{\beta,j}) \ge \sqrt{n}^{1-\delta}].
\end{equation*}

For each set $\mathcal{S}^2_{\beta,j}$ choose a point $\tilde{x}^2_j$. This point will serve as the central point used in the decomposition used in equation \eqref{eq:decompose}: 
\begin{equation}\label{eq:splitchiba}
\begin{aligned}
\chi'_{\beta}
&= \sum_{x^1_1, x^1_2 \in \mathcal{S}^1} \sum_{j=1}^{n^{1-\beta}} \sum_{x^2 \in \mathcal{S}^2_{\beta,j}} \mathcal{I}(x^1_2,j) \mathcal{I}(x^1_1,j)\mathbb{P}(R'_{x^1_1} \cap \mathcal{S}^1 = \emptyset) G_D(x^1_1 - x^2) \\& \hspace{2cm} \times \mathbb{P}(R'_{x^2} \cap \mathcal{S}^2_{\beta,j} = \emptyset) G_D(\tilde{x}^2_j - x^1_2) \mathbb{P}(R'_{x^1_2} \cap \mathcal{S}^1 = \emptyset) 
\\&+ \sum_{x^1_1, x^1_2 \in \mathcal{S}^1} \sum_{j=1}^{n^{1-\beta}} \sum_{x^2 \in \mathcal{S}^2_{\beta,j}} \mathcal{I}(x^1_2,j)  \mathcal{I}(x^1_1,j)\mathbb{P}(R'_{x^1_1} \cap \mathcal{S}^1 = \emptyset) G_D(x^1_1 -  \tilde{x}^2_j) \\ & \hspace{2cm} \times E_{x^2,x^1_2} \mathbb{P}(R'_{x^1_2} \cap\mathcal{S}^1 = \emptyset)\\&+
 \sum_{x^1_1, x^1_2 \in \mathcal{S}^1} \sum_{j=1}^{ n^{1-\beta} } \sum_{x^2 \in \mathcal{S}^2_{\beta,j}} \mathcal{I}(x^1_2,j)  \mathcal{I}(x^1_1,j)\mathbb{P}(R'_{x^1_1} \cap \mathcal{S}^1 = \emptyset)
 \\ & \hspace{2cm} \times  [ G_D(x^1_1 -x^2) - G_D(x^1_1 - \tilde{x}^2_j)] E_{x^2,x^1_2} \mathbb{P}(R'_{x^1_2} \cap \mathcal{S}^1 = \emptyset)\\
&+  \sum_{x^1_1, x^1_2 \in \mathcal{S}^1} \sum_{j=1}^{ n^{1-\beta} } \sum_{x^2 \in \mathcal{S}^2_{\beta,j}} [1 -  \mathcal{I}(x^1_2,j) \mathcal{I}(x^1_1,j)]  \mathbb{P}(R'_{x^1_1} \cap \mathcal{S}^1 = \emptyset) G_D(x^1_1 - x^2) \\&  \hspace{2cm} \times G_{\mathcal{S}^2_{\beta,j}}(x^2, x^1_2) \mathbb{P}(R'_{x^1_2} \cap \mathcal{S}^1 = \emptyset)\\
=:& MT_n+\mathcal{E}_1+\mathcal{E}_2+\mathcal{E}_3.
\end{aligned}
\end{equation}
The analysis of $\chi'_{\beta}$ now devolves into the following three lemmas.

\begin{lem} \label{lem:MTn}
Fix $m \in \N$. 
There exists a constant depending only on $m$(and not on $n$) such that, if we define,
\begin{equation} \label{eq:momestmtn}
\begin{aligned}
MT_n:&= \sum_{x^1_1, x^1_2 \in \mathcal{S}^1} \sum_{j=1}^{ n^{1-\beta}} \sum_{x^2 \in \mathcal{S}^2_{\beta,j}} \mathcal{I}(x^1_2,j) \mathcal{I}(x^1_1,j)\mathbb{P}(R'_{x^1_1} \cap \mathcal{S}^1 = \emptyset) G_D(x^1_1 - x^2)  \\ & \hspace{2cm}  \times \mathbb{P}(R'_{x^2} \cap \mathcal{S}^2_{\beta,j} = \emptyset) 
  G_D(\tilde{x}^2_j - x^1_2) \mathbb{P}(R'_{x^1_2} \cap \mathcal{S}^1 = \emptyset),
\end{aligned}
\end{equation}
then,
\begin{equation*}
\mathbb{E}[(MT_n)^m] \le C_m \frac{n^m(\log \log n)^{2m}}{(\log n)^{3m}}.
\end{equation*}
\end{lem}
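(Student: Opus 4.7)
The plan is to reduce $MT_n$ to a sum over blocks $j$ of a short-walk capacity times the square of a long-walk hitting probability, and then compute the $m$-th moment by conditioning on $\mathcal{S}^2$ so as to separate it from the randomness of $\mathcal{S}^1$. The first key step is to collapse the inner sum over $x^2 \in \mathcal{S}^2_{\beta,j}$: by a forward path decomposition and the Markov property (the same identity behind \eqref{eq:probdecomp}), for any $a \notin \mathcal{S}^2_{\beta,j}$,
\begin{equation*}
\sum_{x^2 \in \mathcal{S}^2_{\beta,j}} G_D(a-x^2)\,\mathbb{P}(R'_{x^2}\cap \mathcal{S}^2_{\beta,j}=\emptyset) = \mathbb{P}^a(\tau_{\mathcal{S}^2_{\beta,j}} < \infty).
\end{equation*}
The indicator $\mathcal{I}(x^1_1,j)$ forces $\operatorname{dist}(x^1_1,\mathcal{S}^2_{\beta,j}) \ge n^{(1-\delta)/2}$, which for $\delta < 1-\beta$ is much larger than the typical diameter $O(n^{\beta/2})$ of the block, so the standard capacity-based upper bound on the hitting probability applies:
\begin{equation*}
\mathbb{P}^{x^1_1}(\tau_{\mathcal{S}^2_{\beta,j}} < \infty) \le C\, G_D(x^1_1-\tilde{x}^2_j)\,\ca(\mathcal{S}^2_{\beta,j}).
\end{equation*}
Substituting this estimate back into $MT_n$ and dropping the remaining indicator on $x^1_2$ produces the factorization
\begin{equation*}
MT_n \le C \sum_{j=1}^{n^{1-\beta}} \ca(\mathcal{S}^2_{\beta,j})\,\psi(\tilde{x}^2_j)^2,\qquad \psi(y):=\sum_{x^1\in\mathcal{S}^1}\mathbb{P}(R'_{x^1}\cap\mathcal{S}^1=\emptyset)\,G_D(x^1-y),
\end{equation*}
and the same identity applied to $\mathcal{S}^1$ gives $\psi(y)=\mathbb{P}^y(\tau_{\mathcal{S}^1}<\infty) \le 1$.

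Given this factorization, I would take the $m$-th moment by expanding the power, conditioning on $\mathcal{S}^2$, and applying Hölder in the $\mathcal{S}^1$-expectation:
\begin{equation*}
\mathbb{E}[MT_n^m] \le C^m \sum_{j_1,\dots,j_m} \mathbb{E}\Big[\prod_{k=1}^m \ca(\mathcal{S}^2_{\beta,j_k})\Big]\cdot \sup_{y_1,\dots,y_m}\mathbb{E}_{\mathcal{S}^1}\Big[\prod_{k=1}^m \psi(y_k)^2\Big].
\end{equation*}
The $\mathcal{S}^2$-factor is controlled by exploiting that distinct blocks $\mathcal{S}^2_{\beta,j}$ are independent and by the concentration moment bound $\mathbb{E}[\ca(\mathcal{S}^2_{\beta,j})^p] \le C_p (n^\beta/\log n)^p$, which follows from Lemma~\ref{lem:aprioribound} applied to a walk of length $n^\beta$; summing over $(n^{1-\beta})^m$ tuples yields $\le C_m (n/\log n)^m$. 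For the $\mathcal{S}^1$-factor, Hölder reduces the joint expectation to single-point high moments, and the decisive quantitative input is the uniform estimate
\begin{equation*}
\sup_y \mathbb{E}_{\mathcal{S}^1}[\psi(y)^{2m}] \le C_m \Big(\frac{\log\log n}{\log n}\Big)^{2m}.
\end{equation*}
Multiplying the two factors produces exactly $C_m\, n^m(\log\log n)^{2m}/(\log n)^{3m}$, as required.

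The main obstacle will be this uniform moment bound on $\psi(y)$. For typical $y$ at distance $\sim \sqrt{n}$ from the origin one expects $\psi(y) \sim 1/\log n$ with good concentration, but on the rare event that $y$ lies inside or close to $\mathcal{S}^1$ one has $\psi(y) = O(1)$, and these realizations must be shown not to inflate the $2m$-th moment beyond the target $\log\log n$ loss. I would prove this bound by following the moment techniques for non-intersection probabilities developed in \cite{As3, DO}, splitting according to whether $y$ is close to or far from $\mathcal{S}^1$, using the approximation $\psi(y)\approx \ca(\mathcal{S}^1)\,G_D(y)$ in the far regime, and controlling the near regime via the trivial bound $\psi\le 1$ together with a small-probability estimate on the event that a fixed $y$ is visited by $\mathcal{S}^1$. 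A secondary subtlety is ensuring that the indicator $\mathcal{I}(x^1_1,j)$ can be dropped without cost when passing from the factorized form to $\psi$; this requires verifying that the "close" contribution (where $x^1$ is near $\mathcal{S}^2_{\beta,j}$) gives at most the same order of moments, which again follows from the diameter bound on the blocks and standard hitting-probability estimates.
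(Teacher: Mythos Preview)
Your reduction $MT_n \le C\sum_j \ca(\mathcal{S}^2_{\beta,j})\,\psi(\tilde x^2_j)^2$ with $\psi(y)=\mathbb{P}^y(\tau_{\mathcal{S}^1}<\infty)$ is correct and is a genuinely different route from the paper's. The gap is the step where you pass to a supremum: the bound
\[
\sup_{y}\,\mathbb{E}_{\mathcal{S}^1}\big[\psi(y)^{2m}\big]\ \le\ C_m\Big(\frac{\log\log n}{\log n}\Big)^{2m}
\]
is false. Both walks start at the origin, so $\tilde x^2_1\in\mathcal{S}^2[0,n^\beta]$ can be $0$ or a neighbor of $0$; for such $y$ the set $\mathcal{S}^1[1,n]$ deterministically contains a neighbor of $y$, hence $\psi(y)\ge c>0$ almost surely and $\mathbb{E}[\psi(y)^{2m}]\asymp 1$. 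Your proposed remedy (``small-probability estimate on the event that a fixed $y$ is visited by $\mathcal{S}^1$'') does not cover this case, since the event is not rare near the common starting point. With the supremum your inequality only yields $\mathbb{E}[MT_n^m]\lesssim (n/\log n)^m$, two powers of $\log n$ short. Salvaging the argument would require keeping the $\mathcal{S}^2$-average of $\psi(\tilde x^2_j)^2$ rather than taking a sup, but then the capacity and $\psi$ factors are coupled through $\mathcal{S}^2$ and the clean product structure is lost.

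The paper proceeds differently: it splits $G_D(x^1_1-x^2)=G_D(x^1_1-\tilde x^2_j)+[G_D(x^1_1-x^2)-G_D(x^1_1-\tilde x^2_j)]$, treats the difference like $\mathcal{E}_2$, and for the main piece imitates the moment computation of Claim~\ref{claim:TL}. There one orders the time indices in the $m$-fold product, conditions on the walk at well-separated times so that the local neighborhoods become conditionally independent, and replaces each of the three non-intersection probabilities by its conditional mean $O(1/\log n)$ via \eqref{interrandom}. This produces the factor $(\log n)^{-3m}$ and leaves a pure Green's-function moment handled by Lemma~\ref{iterated:bound}, giving $n^m(\log\log n)^{2m}$. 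The key distinction is that the paper uses an \emph{averaged} replacement of non-intersection probabilities, which is uniform in the starting point, rather than a \emph{pointwise} bound on a hitting probability, which is not.
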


By Markov's inequality, we obtain the following as a consequence,
\begin{cor} \label{cor:mtn}
Recall $MT_n$ and fix any $\epsilon > 0$.  
If $b_n= O(\log \log n)$ with $\lim_{n \to \infty} b_n = \infty$, we see that we have,
\begin{equation*}
\lim_{n \to \infty }\frac{1}{b_n} \log \mathbb{P}\left( MT_n \ge \epsilon \frac{ b_n n}{(\log n)^2} \right) =-\infty. 
\end{equation*}

\end{cor}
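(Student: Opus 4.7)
The plan is a direct Markov/Chebyshev argument at arbitrarily high moment $m$, using only the moment bound supplied by Lemma \ref{lem:MTn}; the corollary is essentially a bookkeeping exercise once that lemma is in hand.

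First I would fix $m \in \mathbb{N}$ and apply Markov's inequality to $MT_n^m$, obtaining
\begin{equation*}
\mathbb{P}\left( MT_n \ge \epsilon \frac{b_n n}{(\log n)^2} \right) \le \frac{\mathbb{E}[(MT_n)^m]}{\bigl(\epsilon b_n n/(\log n)^2\bigr)^m} \le \frac{C_m (\log \log n)^{2m}}{\epsilon^m\, b_n^m\, (\log n)^m},
\end{equation*}
where the second inequality uses Lemma \ref{lem:MTn}: the $n^m$ factors cancel and two of the three powers of $\log n$ in the denominator of the moment bound are absorbed by the Markov denominator, leaving exactly one factor of $(\log n)^m$.

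Next, I would take logarithms and divide by $b_n$ to get
\begin{equation*}
\frac{1}{b_n} \log \mathbb{P}\left( MT_n \ge \epsilon \frac{b_n n}{(\log n)^2} \right) \le \frac{\log C_m - m \log \epsilon - m \log b_n + 2m \log \log \log n}{b_n} - \frac{m \log \log n}{b_n}.
\end{equation*}
Since $b_n \to \infty$, the contributions from $\log C_m/b_n$, $\log \epsilon/b_n$, and $\log b_n/b_n$ all vanish in the limit. Moreover, for $n$ large enough $2 \log \log \log n \le \tfrac{1}{2} \log \log n$, so the $2m \log \log \log n/b_n$ term is dominated by half of the $m \log \log n/b_n$ term. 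The hypothesis $b_n = O(\log \log n)$ furnishes a constant $K > 0$ such that $\log \log n / b_n \ge 1/K$ eventually, hence
\begin{equation*}
\limsup_{n \to \infty} \frac{1}{b_n} \log \mathbb{P}\left( MT_n \ge \epsilon \frac{b_n n}{(\log n)^2} \right) \le -\frac{m}{2K}.
\end{equation*}

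Finally, $m$ was arbitrary, so sending $m \to \infty$ gives the limit $-\infty$ claimed in the statement. I do not anticipate any genuine obstacle in this proof proposal: every non-trivial estimate has been deferred to Lemma \ref{lem:MTn}, whose proof is the real content; here the only subtlety is confirming that the slow factor $(\log \log n)^{2m}$ from the moment bound is indeed dominated by $(\log n)^m\, b_n^m$ on the exponential scale $b_n$, which the computation above makes explicit.
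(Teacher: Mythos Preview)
Your proposal is correct and follows essentially the same approach as the paper: apply Markov's inequality at a fixed moment $m$ using the bound from Lemma~\ref{lem:MTn}, divide by $b_n$, and then let $m \to \infty$. Your write-up is in fact more careful than the paper's in tracking the lower-order terms and invoking $b_n = O(\log\log n)$ explicitly to bound $\log\log n / b_n$ away from zero.
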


\begin{proof}

By applying Markov's inequality to \eqref{eq:momestmtn} for some fixed power $\mathbb{E}[(MT_n)^m]$, we can derive that,
\begin{equation*}
\begin{aligned}
&\limsup_{n \to \infty}\frac{1}{b_n} \log \mathbb{P}\left( MT_n \ge \epsilon \frac{ b_n n}{(\log n)^2} \right) \\ &\le \limsup_{n \to \infty} -\frac{m}{b_n} \left[\log\epsilon - \frac{ \log C_m}{m}+ \log \log n - 2 \log \log \log n +\log b_n \right] 
\le - m \frac{\log \log n}{b_n}.
\end{aligned}
\end{equation*}
The quantity above will go to $\infty$ as one takes $m$ to $\infty$. 
    
\end{proof}

\begin{proof}[Proof of Lemma \ref{lem:MTn}]
Since the proof is similar to Claim \ref{claim:TL} and the estimate of $\mathcal{E}_2$, we explain it very briefly. 
We estimate $MT_n$ by decomposing the term of $G_D(x^1_1 - x^2)$ in $MT_n$ by $G_D(x^1_1 - x^2)-G_D(x^1_1- \tilde{x}^2_j)$ and $G_D(x^1_1- \tilde{x}^2_j)$. 
Concerning the term of $MT_n$ including $G_D(x^1_1- \tilde{x}^2_j)$, note that by \cite[Theorem 3.5.1]{LA91}, 
\begin{align}\label{interrandom}
\mathbb{P}(\mathcal{S}^1 (0,\infty) \cap (\mathcal{S}^2[0,n] \cup \mathcal{S}^3[0,n])=\emptyset)
\lesssim (\log n)^{-1},
\end{align}
where $\mathcal{S}^3$ is an independent random walk from $\mathcal{S}^1$ and $\mathcal{S}^2$. 
If one could freely replace the probability of non-intersection of the random walks appearing in the above expression of \eqref{eq:momestmtn} with $O\left( \frac{1}{\log n} \right)$, then 
the term of $MT_n$ including $G_D(x^1_1- \tilde{x}^2_j)$ is a consequence of Lemma \ref{iterated:bound} and repeating the proof of Claim \ref{claim:TL} with the aid of \eqref{interrandom}. 
Concerning the term of $MT_n$ including $G_D(x^1_1 - x^2)-G_D(x^1_1- \tilde{x}^2_j)$, notice that here we could make the replacement $G_D(x^1_1 - x^2) \approx G_D(x^1_1- \tilde{x}^2_j)$ since $\| x^2 - \tilde{x}^2_j\| \le n^{\beta}$ as they are both elements of $\mathcal{S}^2_{\beta,j}$ under $\mathcal{I}(x^1_1,j) $. Then we can estimate the term of $MT_n$ including $G_D(x^1_1 - x^2)-G_D(x^1_1- \tilde{x}^2_j)$ by a similar argument to the estimate of $\mathcal{E}_2$. 
\end{proof}

\begin{lem}\label{iterated:bound}
Consider the following quantity,
\begin{equation*}
\begin{aligned}
MT_n':&= \sum_{x^1_1, x^1_2 \in \mathcal{S}^1} \sum_{x^2 \in \mathcal{S}^2}  G_D(x^1_1 - x^2) G_D(x^2 - x^1_2).
\end{aligned}
\end{equation*}
There exists some constant $C_m$(not depending on $n$) such that
\begin{equation*}
\mathbb{E}[(MT'_n)^m] \le C_m  n^{m}(\log \log n)^{2m}.
\end{equation*}
\end{lem}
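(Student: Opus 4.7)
The plan is to bound the moments of $MT'_n$ by an iterated application of the Markov property of the random walks, reducing the computation to a spatial sum of products of Green's functions. First I would enlarge $MT'_n$ by replacing the sums over the ranges $\mathcal{S}^1, \mathcal{S}^2$ by sums over time indices (valid since each range point is visited at least once):
$$MT'_n \le \sum_{i_1, i_2, k=1}^n G_D(\mathcal{S}^1_{i_1} - \mathcal{S}^2_k)\, G_D(\mathcal{S}^2_k - \mathcal{S}^1_{i_2}).$$

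Raising to the $m$-th power yields a sum over $3m$ time indices. Taking expectation and using the independence of $\mathcal{S}^1$ and $\mathcal{S}^2$, I would compute the expectation over each walk separately. Ordering the $2m$ time indices on $\mathcal{S}^1$ (at combinatorial cost $(2m)!$) and the $m$ time indices on $\mathcal{S}^2$ (cost $m!$), the expectation factorizes by the Markov property as a product of transition probabilities $p_{s_j - s_{j-1}}(z_{j-1}, z_j)$ between successive positions of the walk. Summing each increment $s_j - s_{j-1}$ from $0$ to $n$ and using $\sum_{t \ge 0} p_t(x,y) \le G_D(y-x)$ replaces each transition kernel by a Green's function, giving an upper bound of the form
$$\mathbb{E}[(MT'_n)^m] \le C_m \sum_{\mathbf{z}^1,\mathbf{z}^2} \prod_{j=1}^{2m} G_D(z^1_j - z^1_{j-1}) \prod_{k=1}^{m} G_D(z^2_k - z^2_{k-1}) \prod_{\ell=1}^{2m} G_D(z^1_\ell - z^2_{\sigma(\ell)}),$$
where $\sigma:\{1,\ldots,2m\}\to\{1,\ldots,m\}$ is the pairing arising from the original Green's factors in $MT'_n$.

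The main obstacle is then bounding this spatial combinatorial sum. In $d = 4$, a single convolution $\sum_{\|z\|\le\sqrt n} G_D(x-z) G_D(z-y)$ is of order $\log n$, so a naive iteration would yield $n^m (\log n)^{O(m)}$, much weaker than the target $n^m(\log\log n)^{2m}$. To obtain the sharper bound, I would exploit the fact that the logarithmic enhancement in the elementary convolution only arises when the convolved points are concentrated near the origin or near a cluster of existing walk positions, a rare configuration whose probabilistic weight carries a cancelling factor. Adapting the dyadic-scale decomposition used for moment estimates of intersection local times in \cite[Chapter 6]{BCR} (see also \cite{Chenbook}), I would partition the spatial domain into annuli at dyadic distances from the origin and, separately, bound the ``typical'' contributions (where each annulus gives $O(1)$) and the ``atypical'' contributions (where probabilistic weight absorbs all but at most $\log\log n$ of the logarithmic factors per Green's factor). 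Iterating this bookkeeping through the Feynman-diagram structure then delivers the claimed $(\log\log n)^{2m}$ prefactor, with the absolute constant absorbed into $C_m$.
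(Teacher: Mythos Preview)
Your reduction to a purely spatial Green's-function sum does not work, and the vague appeal to a dyadic decomposition from \cite{BCR} cannot rescue it. Concretely: once you bound every transition kernel $\sum_{t=0}^{n}p_t(x,y)$ by $G_D(y-x)$, the resulting spatial sum is actually infinite in $d=4$. Already for $m=1$, the vertex $z^1_2$ carries exactly two Green's-function factors, and $\sum_{z}G_D(z-a)G_D(z-b)=(G_D*G_D)(a-b)=+\infty$. If you instead keep the time cutoff and use a truncated convolution, the best you get per pair of factors is $O(\log n)$, which is precisely the ``naive'' $n^m(\log n)^{O(m)}$ bound you yourself flag; the dyadic machinery in \cite{BCR,Chenbook} is tuned to produce $m!$-type combinatorics, not to turn $\log n$ into $\log\log n$.

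The paper's argument is structurally different and this difference is the whole point. It does \emph{not} convert to a spatial diagram. Instead it conditions on $\mathcal{S}^2$ and uses the key probabilistic fact that, with overwhelming probability, the points $\mathcal{S}^2_{i_\ell}$ sit at distance at least $\sqrt{n}(\log n)^{-\alpha}$ from the origin (the starting point of $\mathcal{S}^1$). For a \emph{fixed} target $y$ at that distance one has the sharp estimate
\[
\mathbb{E}\Bigl[\sum_{k=1}^{n}G_D(\mathcal{S}^1_k-y)\Bigr]\ \lesssim\ \log\log n,
\]
because the walk started at $0$ only begins to feel $y$ at times comparable to $\|y\|^2\sim n(\log n)^{-2\alpha}$, so the usual $\sum_{k=1}^n k^{-1}\sim\log n$ collapses to $\log\bigl((\log n)^{2\alpha}\bigr)$. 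Iterating this over the $2m$ factors via the Markov property (with a separate crude bound on the small-probability event that some $\mathcal{S}^2_{i_\ell}$ is close to the origin) yields $(\log\log n)^{2m}$. The $\log\log n$ is thus a consequence of the \emph{separation between the starting points of the two walks and the typical scale of their positions}, information which your spatial reduction erases. You need to supply this mechanism (or an equivalent one) rather than a generic diagrammatic bound.
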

\begin{proof}
Let $\alpha > 4m$. 
First, we show that 
for any $y_i \in \Z^4$ with $\inf_{1\le i\le 2m} \|y_i\|\ge n^{1/2}(\log n)^{-\alpha}$, 
\begin{equation}\label{wed}
\begin{aligned}
\mathbb{E}\left[\sum_{k_1,\ldots ,k_{2m} =1}^n  \prod_{i=1}^{2m} G_D(\mathcal{S}^1_{k_i} - y_i) \right]
\le  C_{m,\alpha} (\log \log n)^{2m}.
\end{aligned}
\end{equation}
Let $A_i=\{\|\mathcal{S}^1_{k_{i-1}}-y_{i}\|\ge n^{1/2}(\log n)^{-2\alpha}\}$. 
Indeed, 
\begin{equation*}
\begin{aligned}
&\sum_{1\le k_1\le \ldots \le k_{2m}\le n}  \prod_{i=1}^{2m} G_D(\mathcal{S}^1_{k_i} - y_i) \\
=&\sum_{1\le k_1\le \ldots \le k_{2m}\le n} 
G_D(\mathcal{S}^1_{k_{2m}}-\mathcal{S}^1_{k_{2m-1}}+\mathcal{S}^1_{k_{2m-1}}- y_{2m}) 
\prod_{i=1}^{2m-1} G_D(\mathcal{S}^1_{k_i} - y_i) \\
=&\sum_{1\le k_1\le \ldots \le k_{2m}\le n} 
G_D(\mathcal{S}^1_{k_{2m}}-\mathcal{S}^1_{k_{2m-1}}+\mathcal{S}^1_{k_{2m-1}}- y_{2m}) \mathbbm{1}_{A_{2m}}
\prod_{i=1}^{2m-1} G_D(\mathcal{S}^1_{k_i} - y_i) \\
+&\sum_{1\le k_1\le \ldots \le k_{2m}\le n} 
G_D(\mathcal{S}^1_{k_{2m}}-\mathcal{S}^1_{k_{2m-1}}+\mathcal{S}^1_{k_{2m-1}}- y_{2m}) \mathbbm{1}_{A_{2m}^c}
\prod_{i=1}^{2m-1} G_D(\mathcal{S}^1_{k_i} - y_i). 
\end{aligned}
\end{equation*}
Note that \cite[Thm. 1.2.1]{LA91} that for any $x \in \Z^4$, $i \ge 1$,
\begin{align}\label{gre2}
  \mathbb{P}(\mathcal{S}^1_i=x) &\lesssim   i^{-2} \Big[ e^{-2\|x\|^2/i} + (\|x\|^2 \vee i)^{-1} \Big]. 
\end{align}
Hence if $\|y\|\ge n^{1/2}(\log n)^{-2\alpha}$, 
\begin{equation*}
\begin{aligned}
\mathbb{E}\left[\sum_{1\le k \le n}  G_D(\mathcal{S}^1_k - y)  \right]
\lesssim  \log \log n
\end{aligned}
\end{equation*}
and
$$
\mathbb{E}\left[\sum_{1\le k_{2m-1}\le n}\mathbbm{1}_{A_{2m}^c} G_D(\mathcal{S}^1_{k_{2m-1}} -y_{2m-1})\right] \lesssim (\log n)^{-4\alpha} \times \log n.
$$
In addition, 
\begin{equation*}
\begin{aligned}
\max_{y_1, \ldots, y_{2m}} \mathbb{E}\left[\sum_{k_1,\ldots ,k_{2m} =1}^n  
\prod_{i=1}^{2m} G_D(\mathcal{S}^1_{k_i} - y_i) \right]
\le C_{m} (\log n)^{2m}.
\end{aligned}
\end{equation*}
Then, 
\begin{equation*}
\begin{aligned}
&\mathbb{E}\left[\sum_{1\le k_1\le \ldots \le k_{2m}\le n} 
G_D(\mathcal{S}^1_{k_{2m}}-\mathcal{S}^1_{k_{2m-1}}+\mathcal{S}^1_{k_{2m-1}}- y_{2m}) \mathbbm{1}_{A_{2m}}
\prod_{i=1}^{2m-1} G_D(\mathcal{S}^1_{k_i} - y_i) \right]\\
\lesssim &  (\log \log n)
\times \mathbb{E}\left[\sum_{1\le k_1\le \ldots \le k_{2m-1}\le n}  
\prod_{i=1}^{2m-1} G_D(\mathcal{S}^1_{k_i} - y_i)\right]. 
\end{aligned}
\end{equation*}
and by Lemma \ref{lem:BridgeBound},
\begin{equation*}
\begin{aligned}
&\mathbb{E}\left[\sum_{1\le k_1\le \ldots \le k_{2m}\le n} 
G_D(\mathcal{S}^1_{k_{2m}}-\mathcal{S}^1_{k_{2m-1}}+\mathcal{S}^1_{k_{2m-1}}- y_{2m}) \mathbbm{1}_{A_{2m}^c}
\prod_{i=1}^{2m-1} G_D(\mathcal{S}^1_{k_i} - y_i) \right]\\
\lesssim &  (\log n) \mathbb{E}\left[\sum_{1\le k_1\le \ldots \le k_{2m-1}\le n}  
 \mathbbm{1}_{A_{2m}^c}\prod_{i=1}^{2m-1} G_D(\mathcal{S}^1_{k_i} - y_i)\right]\\
 \le & C_m (\log n)^{2m} (\log n)^{-4\alpha}.
\end{aligned}
\end{equation*}

Hence, we obtain \eqref{wed}. 
Moreover, if $\inf \{i_1,\ldots ,i_m\} \ge n(\log n)^{-\alpha}$,
\begin{equation*}
\begin{aligned}
\mathbb{E}[\mathbbm{1}_D]:=\mathbb{E}\left[ \cup_{i=1}^m \mathbbm{1}
\{\|\mathcal{S}^2_i\|\le n^{1/2}(\log n)^{-\alpha} \} \right]
\le C_{m} (\log n)^{-4\alpha}
\end{aligned}
\end{equation*}
and 
\begin{equation*}
\begin{aligned}
\sum_{\inf \{i_1,\ldots ,i_m\} \le n(\log n)^{-\alpha}} 1 
\le  n^m (\log n)^{-\alpha}. 
\end{aligned}
\end{equation*}
Hence, 
\begin{equation*}
\begin{aligned}
&\mathbb{E}[(MT'_n)^m] \le
\mathbb{E}\left[\sum_{ i_1,\ldots ,i_m =1}^n \sum_{k_1,\ldots ,k_{2m} =1}^n  
\prod_{j=1}^{2m} G_D(\mathcal{S}^1_{k_j} - \mathcal{S}^2_{i_{\lceil j/2 \rceil }}) \right]\\
\le & C_m n^m (\log n)^m (\log n)^{-\alpha} 
+\mathbb{E}\left[\sum_{\inf \{i_1,\ldots ,i_m\} \ge n(\log n)^{-\alpha}}  \sum_{k_1,\ldots ,k_{2m} =1}^n  
\prod_{j=1}^{2m} G_D(\mathcal{S}^1_{k_j} - \mathcal{S}^2_{i_{\lceil j/2 \rceil }}) \right] \\
\le & C_m n^m (\log n)^m (\log n)^{-\alpha} 
+\mathbb{E}\left[\sum_{\inf \{i_1,\ldots ,i_m\} \ge n(\log n)^{-\alpha}}  \sum_{k_1,\ldots ,k_{2m} =1}^n  
\mathbbm{1}_{D^c} \prod_{j=1}^{2m} G_D(\mathcal{S}^1_{k_j} - \mathcal{S}^2_{i_{\lceil j/2 \rceil }}) \right]\\
\le & C_m n^m (\log  \log n)^{2m}.
\end{aligned}
\end{equation*}
Therefore, we obtain the desired result. 
\end{proof}

The other terms of equation \eqref{eq:splitchiba} are of much smaller order.
\begin{lem} \label{lem:E23}
Consider the second summand on the right hand side of equation \eqref{eq:splitchiba}. Namely, let
\begin{equation}\label{eq:defe1}
\begin{aligned}
\mathcal{E}_1 &:=\sum_{x^1_1, x^1_2 \in \mathcal{S}^1} \sum_{j=1}^{ n^{1-\beta} } \sum_{x^2 \in \mathcal{S}^2_{\beta,j}} \mathcal{I}(x^1_2,j) \mathcal{I}(x^1_1,j)\mathbb{P}(R'_{x^1_1} \cap \mathcal{S}^1 = \emptyset) G_D(x^1_1 -  \tilde{x}^2_j)\\ & \hspace{2cm} \times E_{x^2,x^1_2} \mathbb{P}(R'_{x^1_2} \cap\mathcal{S}^1 = \emptyset).
\end{aligned}
\end{equation}
We have that,
\begin{equation} \label{eq:momeste1}
\mathbb{E}[|\mathcal{E}_1|] \lesssim n^{2 \beta +\frac{3}{2} \delta + \frac{1}{2}}.
\end{equation}

We also have a similar estimate for the third summand on the right hand side of equation \eqref{eq:splitchiba}. Namely, we have that,
\begin{equation} \label{eq:defe2}
\begin{aligned}
\mathcal{E}_2:=&
 \sum_{x^1_1, x^1_2 \in \mathcal{S}^1} \sum_{j=1}^{ n^{1-\beta}} \sum_{x^2 \in \mathcal{S}^2_{\beta,j}} \mathcal{I}(x^1_2,j) \mathcal{I}(x^1_1,j)\mathbb{P}(R'_{x^1_1} \cap \mathcal{S}^1 = \emptyset) \\ & \hspace{2cm}\times [ G_D(x^1_1 -x^2) - G_D(x^1_1 - \tilde{x}^2_j)]  E_{x^2,x^1_2} \mathbb{P}(R'_{x^1_2} \cap \mathcal{S}^1 = \emptyset),
\end{aligned}
\end{equation}
will satisfy,
\begin{equation}\label{eq:momeste2}
\mathbb{E}[|\mathcal{E}_2|] \lesssim n^{3 \beta +2 \delta }.
\end{equation}
\end{lem}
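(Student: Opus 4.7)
The plan is to exploit two sources of smallness coming from the indicator restrictions $\mathcal{I}(x^1_1, j) = \mathcal{I}(x^1_2, j) = 1$, which force the corresponding points of $\mathcal{S}^1$ to lie at distance at least $\sqrt{n}^{\,1-\delta}$ from the block $\mathcal{S}^2_{\beta, j}$. For $\mathcal{E}_1$ the only small factor is the error entry $E_{x^2, x^1_2}$; for $\mathcal{E}_2$ one has \emph{two} small factors, namely $E_{x^2, x^1_2}$ together with the Green-function difference $G_D(x^1_1 - x^2) - G_D(x^1_1 - \tilde{x}^2_j)$. Both estimates then reduce to elementary counting over the random walk ranges and the blocks, together with the independence of $\mathcal{S}^1$ and $\mathcal{S}^2$.

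The heart of the argument is the pointwise bound
\begin{equation*}
|E_{a, x^1_2}| \lesssim n^{\beta - 3/2 + 3\delta/2}, \qquad a \in \mathcal{S}^2_{\beta, j}, \quad \mathcal{I}(x^1_2, j) = 1.
\end{equation*}
Rather than inverting the sign-indefinite matrix $(\mathcal{G}^{\mathcal{S}^2_{\beta, j}})^{-1}$ directly, I will use a last-exit decomposition. Writing $H(a, b) := \mathbb{P}^a(\tau < \infty, \mathcal{S}_\tau = b)$, where $\tau$ is the first hitting time of $\mathcal{S}^2_{\beta, j}$ after time $1$, one has
\begin{equation*}
G_D(a - x^1_2) = G_{\mathcal{S}^2_{\beta, j}}(a, x^1_2) + \sum_{b \in \mathcal{S}^2_{\beta, j}} H(a, b)\, G_D(b - x^1_2), \quad \sum_{b} H(a, b) = 1 - \mathbb{P}(R'_a \cap \mathcal{S}^2_{\beta, j} = \emptyset).
\end{equation*}
Substituting into the defining identity $E_{a, x^1_2} = G_{\mathcal{S}^2_{\beta, j}}(a, x^1_2) - \mathbb{P}(R'_a \cap \mathcal{S}^2_{\beta, j} = \emptyset)\,G_D(a_1 - x^1_2)$ from \eqref{eq:decompose} and rearranging gives
\begin{equation*}
E_{a, x^1_2} = \bigl[G_D(a - x^1_2) - G_D(a_1 - x^1_2)\bigr] - \sum_{b} H(a, b)\,\bigl[G_D(b - x^1_2) - G_D(a_1 - x^1_2)\bigr].
\end{equation*}
Since $a, a_1, b \in \mathcal{S}^2_{\beta, j}$ satisfy $\|a - a_1\|, \|b - a_1\| \le n^\beta$ and $\mathcal{I}(x^1_2, j) = 1$, the gradient estimate \eqref{eq:diffnbd} bounds each bracketed difference by $n^{\beta - 3/2 + 3\delta/2}$, and $\sum_b H(a, b) \le 1$ yields the stated bound.

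With this pointwise bound in hand, \eqref{eq:momeste1} follows upon substituting into \eqref{eq:defe1}, dropping the non-intersection probabilities (each $\le 1$), and using $|\mathcal{S}^2_{\beta, j}| \le n^\beta$ and $|\mathcal{S}^1| \le n$ to arrive at
\begin{equation*}
|\mathcal{E}_1| \lesssim n^{2\beta - 3/2 + 3\delta/2} \sum_{x^1_1, x^1_2 \in \mathcal{S}^1} \sum_{j=1}^{n^{1-\beta}} G_D(x^1_1 - \tilde{x}^2_j).
\end{equation*}
Taking expectation and conditioning on $\mathcal{S}^2$, the inner expectation is controlled uniformly in $y$ by
\begin{equation*}
\mathbb{E}\Bigl[\sum_{x \in \mathcal{S}^1} G_D(x - y)\Bigr] \le \sum_{l \ge 1} \min(l, n)\, p_l(y) \lesssim \log n,
\end{equation*}
using $p_l(y) \lesssim l^{-2}$ for $d = 4$. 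Combining with $|\mathcal{S}^1| \le n$ and $\sum_j 1 = n^{1-\beta}$ (and absorbing the harmless $\log n$ into a negligible increase of $\delta$) delivers \eqref{eq:momeste1}. For \eqref{eq:momeste2}, estimate \eqref{eq:diffnbd} likewise bounds the Green-function difference $|G_D(x^1_1 - x^2) - G_D(x^1_1 - \tilde{x}^2_j)| \lesssim n^{\beta - 3/2 + 3\delta/2}$; combining the two small factors with the crude counting $\sum_{x^1_1, x^1_2 \in \mathcal{S}^1} \sum_j |\mathcal{S}^2_{\beta, j}| \le n^2 \cdot n$ immediately produces the claim (in the regime $\delta \le \beta$ relevant later).

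The main obstacle is the pointwise bound on $|E_{a, x^1_2}|$: a direct spectral attack via $(\mathcal{G}^{\mathcal{S}^2_{\beta, j}})^{-1}$ is unwieldy, because $\mathcal{G}^{\mathcal{S}^2_{\beta, j}}$ is random and its inverse has sign-varying entries, so one cannot simply pull the $\ell^\infty$-norm of the difference vector through the inverse. The probabilistic last-exit identity is what makes the cancellation explicit and reduces matters to the clean Green-function gradient bound \eqref{eq:diffnbd}; everything downstream is then routine counting and independence.
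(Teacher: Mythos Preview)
Your argument is correct and takes a genuinely different route from the paper. The paper controls $\sum_{x^2}|E_{x^2,x^1_2}|$ by Cauchy--Schwarz together with the spectral bound of Lemma~\ref{clm:matinv} (the minimum eigenvalue of $\mathcal{G}^{\mathcal{S}^2_{\beta,j}}$ is $\ge 1/2$, so $\|(\mathcal{G}^{\mathcal{S}^2_{\beta,j}})^{-1}\|_{\ell^2\to\ell^2}\le 2$), obtaining the same $\ell^1$ bound $n^{2\beta-3/2+3\delta/2}$ that you get. Your first-return decomposition instead produces the \emph{pointwise} estimate $|E_{a,x^1_2}|\lesssim n^{\beta-3/2+3\delta/2}$ directly, which is more elementary (it bypasses the spectral lemma entirely) and slightly stronger; the downstream bookkeeping for $\mathcal{E}_1$ is then identical in spirit, and your final bound is in fact a shade sharper than the stated $n^{2\beta+3\delta/2+1/2}$.

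For $\mathcal{E}_2$ the approaches diverge: the paper keeps a factor $G_D(x^1_1-\tilde{x}^2_j)$ via the relative estimate $|G_D(x^1_1-x^2)-G_D(x^1_1-\tilde{x}^2_j)|\lesssim G_D(x^1_1-\tilde{x}^2_j)\,n^{\beta-1/2+\delta/2}$ and then reduces to the $\mathcal{E}_1$ computation, landing exactly on $n^{3\beta+2\delta}$. Your crude counting gives instead the deterministic bound $|\mathcal{E}_2|\lesssim n^{2\beta+3\delta}$, which implies the stated $n^{3\beta+2\delta}$ only under the extra assumption $\delta\le\beta$. This is harmless for the application (Corollary~\ref{cor:core12} needs only that both exponents are $<1$), but if you want to match the lemma as written without constraints on the relative sizes of $\beta$ and $\delta$, adopt the paper's relative Green-function estimate for the $x^1_1$-difference and keep your pointwise bound for $E$; that combination yields $n^{3\beta+2\delta}$ on the nose. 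Minor quibble: what you call a ``last-exit'' decomposition is really a first-return decomposition, though the identity you write down is correct either way.
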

Lemma \ref{lem:E23} will be shown later in this section. 
As before, Markov's inequality will give. one can derive that,
\begin{cor} \label{cor:core12}
Recall the terms $\mathcal{E}_1$ and $\mathcal{E}_2$ from equations \eqref{eq:defe1} and \eqref{eq:defe2}. Fix any $\epsilon>0$ and set $b_n= O(\log \log n)$ with $\lim_{n \to \infty} b_n = \infty$. We have that,
\begin{equation} \label{eq:conclusione23}
\lim_{n \to \infty} \frac{1}{b_n} \log 
\mathbb{P}\left( \max(|\mathcal{E}_1|,|\mathcal{E}_2|) \ge \epsilon \frac{nb_n}{(\log n)^2}\right) = -\infty.
\end{equation}

\end{cor}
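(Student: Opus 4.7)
The plan is to obtain the corollary directly from the first moment bounds \eqref{eq:momeste1} and \eqref{eq:momeste2} supplied by Lemma~\ref{lem:E23}, together with Markov's inequality. The decisive point is that $\beta$ and $\delta$ were introduced in \eqref{eq:splitchiba} as free parameters, to be fixed later at our convenience; we will simply choose them small enough that both moment exponents are strictly less than $1$. Explicitly, we require
\begin{equation*}
2\beta + \tfrac{3}{2}\delta < \tfrac{1}{2}, \qquad 3\beta + 2\delta < 1,
\end{equation*}
which is satisfied by, e.g., $\beta = 1/5$, $\delta = 1/100$. Under such a choice there is some $\eta > 0$ with $\mathbb{E}[|\mathcal{E}_1|] \lesssim n^{1-\eta}$ and $\mathbb{E}[|\mathcal{E}_2|] \lesssim n^{1-\eta}$.

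Applying Markov's inequality to each $|\mathcal{E}_i|$ and combining via a union bound gives
\begin{equation*}
\mathbb{P}\!\left(\max(|\mathcal{E}_1|,|\mathcal{E}_2|) \ge \epsilon \frac{n b_n}{(\log n)^2}\right) \;\le\; \sum_{i=1}^{2} \frac{\mathbb{E}[|\mathcal{E}_i|]}{\epsilon\, n b_n/(\log n)^2} \;\lesssim\; \frac{(\log n)^2}{\epsilon\, b_n}\, n^{-\eta}.
\end{equation*}
Taking logarithms and dividing by $b_n$ yields
\begin{equation*}
\frac{1}{b_n}\log\mathbb{P}\!\left(\max(|\mathcal{E}_1|,|\mathcal{E}_2|) \ge \epsilon \frac{n b_n}{(\log n)^2}\right) \;\le\; \frac{-\eta \log n + 2\log\log n - \log(\epsilon b_n) + O(1)}{b_n}.
\end{equation*}
Because $b_n = O(\log\log n) = o(\log n)$, the leading term $-\eta \log n/b_n$ drives the whole expression to $-\infty$, which is exactly \eqref{eq:conclusione23}.

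There is essentially no obstacle here beyond the bookkeeping of constants: once Lemma~\ref{lem:E23} provides polynomial-in-$n$ bounds on $\mathbb{E}[|\mathcal{E}_i|]$ with exponent strictly less than $1$, the moderate scale $n b_n/(\log n)^2$ is polynomially larger than the first moment, so Markov alone (no higher moments, no exponential moments) suffices. One should only verify that the chosen $\beta, \delta$ are consistent with the scale restrictions used elsewhere — in particular, in the proof of Lemma~\ref{lem:MTn} — which is immediate since those arguments merely require $\beta < 1$ and $\delta$ small.
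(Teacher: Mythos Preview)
Your proof is correct and follows essentially the same approach as the paper: both apply Markov's inequality to the first-moment bounds \eqref{eq:momeste1} and \eqref{eq:momeste2} from Lemma~\ref{lem:E23}, then observe that the resulting upper bound on $\frac{1}{b_n}\log\mathbb{P}(\cdot)$ contains a dominant $-c\log n/b_n$ term that tends to $-\infty$. Your version is slightly more explicit in spelling out the required constraints on $\beta,\delta$ and in verifying them with a concrete choice, whereas the paper leaves these implicit.
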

\begin{proof}
By Markov's inequality applied to \eqref{eq:momeste1} and \eqref{eq:momeste2}, we have that 
\begin{equation*}
\frac{1}{b_n} \log \mathbb{P}\left(|\mathcal{E}_1| \ge \epsilon \frac{n b_n}{(\log n)^2}\right) \le \frac{- \log \epsilon - (\frac{1}{2} -2\beta - \frac{3}{2}\delta) \log n + 2 \log \log n }{b_n}, 
\end{equation*}
and,
\begin{equation*}
\frac{1}{b_n} \log \mathbb{P}\left(|\mathcal{E}_2| \ge \epsilon \frac{nb_n}{(\log n)^2} \right) \le \frac{-\log \epsilon - (1 -3\beta - 2\delta) \log n + 2\log \log n }{b_n}.
\end{equation*}
The desired conclusion \eqref{eq:conclusione23} follows from taking the limit $n \to \infty$.
\end{proof}

Finally, the last error term, the fourth summand of \eqref{eq:splitchiba} will also be of smaller order.

\begin{lem}
Consider the last summand of \eqref{eq:splitchiba},
\begin{align}\label{{E}_3}
 \mathcal{E}_3:=
 \sum_{x^1_1, x^1_2 \in \mathcal{S}^1} \sum_{j=1}^{ n^{1-\beta}} \sum_{x^2 \in \mathcal{S}^2_{\beta,j}}
 &[1 -  \mathcal{I}(x^1_2,j) \mathcal{I}(x^1_1,j)]  \mathbb{P}(R'_{x^1_1} \cap \mathcal{S}^1 = \emptyset) \\
 \notag
 & G_D(x^1_1 - x^2) G_{\mathcal{S}^2_{\beta,j}}(x^2, x^1_2) \mathbb{P}(R'_{x^1_2} \cap \mathcal{S}^1 = \emptyset).
\end{align}
We have that, for some $\delta'>0$,
\begin{equation*}
\mathbb{E}\left[ \mathcal{E}_3 \right] \lesssim n^{1-\delta'}.
\end{equation*}
\end{lem}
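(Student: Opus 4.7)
The plan is to exploit that the factor $1 - \mathcal{I}(x^1_1, j)\mathcal{I}(x^1_2, j)$ is nonzero only when at least one of $x^1_1, x^1_2$ is atypically close to the segment $\mathcal{S}^2_{\beta,j}$, which is a polynomially rare event in $n$ since $\mathcal{S}^1$ and $\mathcal{S}^2$ are independent. As a first step, bound the two non-intersection probabilities trivially by $1$, replace the restricted Green's function $G_{\mathcal{S}^2_{\beta,j}}(x^2, x^1_2)$ by $G_D(x^1_2 - x^2)$, and apply the union bound
\begin{equation*}
1 - \mathcal{I}(x^1_1, j)\mathcal{I}(x^1_2, j) \le \mathbbm{1}[\text{dist}(x^1_1, \mathcal{S}^2_{\beta,j}) < \sqrt{n}^{1-\delta}] + \mathbbm{1}[\text{dist}(x^1_2, \mathcal{S}^2_{\beta,j}) < \sqrt{n}^{1-\delta}].
\end{equation*}
After these reductions the summand becomes symmetric under $x^1_1 \leftrightarrow x^1_2$, so it suffices to estimate the contribution of the first indicator.

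Next, I condition on $\mathcal{S}^1$ and compute the expectation over $\mathcal{S}^2$. The event $\text{dist}(x^1_1, \mathcal{S}^2_{\beta,j}) < \sqrt{n}^{1-\delta}$ coincides with the event that $\mathcal{S}^2$ enters the ball $B(x^1_1, \sqrt{n}^{1-\delta})$ during the interval $[(j-1)n^\beta, jn^\beta]$. Denoting this first entrance time by $\tau$ and applying the strong Markov property at $\tau$, the contribution decouples into two parts: (i) the probability that $\mathcal{S}^2$ hits $B(x^1_1, \sqrt{n}^{1-\delta})$ during the $j$-th segment, which in dimension $4$ is bounded via \cite[Proposition 1.5.10]{LA91} by a factor of order $n^{1-\delta}/(\|\mathcal{S}^2_{(j-1)n^\beta} - x^1_1\| \vee \sqrt{n}^{1-\delta})^2$; and (ii) the annealed sum $\mathbb{E}[\sum_{x^2 \in \mathcal{S}^2_{\beta,j} \text{ after } \tau} G_D(x^1_1 - x^2) G_D(x^2 - x^1_2)]$ starting from the entrance point, which by a standard triple Green's function convolution estimate in $\mathbb{Z}^4$ produces at most a polylogarithmic factor.

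Finally, I average over the position of $\mathcal{S}^2_{(j-1)n^\beta}$ using \eqref{gre2}, and then sum successively over $j \in \{1, \dots, n^{1-\beta}\}$, over $x^1_2 \in \mathcal{S}^1$, and over $x^1_1 \in \mathcal{S}^1$. Treating $\sum_{x^1_2 \in \mathcal{S}^1} G_D(x^2 - x^1_2)$ and $\sum_{x^1_1 \in \mathcal{S}^1}$ with the analogs of the estimates in Lemma \ref{iterated:bound}, the $n^{1-\delta}$ saving from the hitting probability, multiplied by the power-counting contributions of the remaining sums and Green's function convolutions, yields the bound $\mathbb{E}[\mathcal{E}_3] \lesssim n^{1-\delta'}$ for some $\delta' > 0$, provided $\delta$ is chosen small enough and $\beta$ is selected appropriately with respect to $\delta$.

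The main obstacle is the careful power counting in the last step: one must verify that all sub-regimes (in particular those where $x^1_2$ happens to lie close to $x^1_1$, or close to the entrance ball $B(x^1_1, \sqrt{n}^{1-\delta})$) yield sub-leading contributions. These sub-cases can be handled by splitting the sums over $x^1_2$ and over $x^2$ dyadically according to distances, in direct analogy with the estimates of $\mathcal{E}_1$ and $\mathcal{E}_2$ from Lemma \ref{lem:E23}; no essentially new ideas are required beyond those already used there.
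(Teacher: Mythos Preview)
Your initial reduction (bounding the non-intersection probabilities by $1$, replacing $G_{\mathcal{S}^2_{\beta,j}}$ by $G_D$, union bound on the two indicators, symmetry) matches the paper. After that point the paper takes a much more direct route that avoids hitting times entirely. The observation you are missing is that each segment $\mathcal{S}^2_{\beta,j}$ has deterministic diameter at most $n^{\beta}\ll \sqrt{n}^{\,1-\delta}$ (for $\beta<(1-\delta)/2$), so the segment-level event $\text{dist}(x^1_1,\mathcal{S}^2_{\beta,j})<\sqrt{n}^{\,1-\delta}$ forces $\|x^1_1-x^2\|\lesssim\sqrt{n}^{\,1-\delta}$ for \emph{every} $x^2$ in that segment. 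Hence the whole sum is dominated by
\[
\mathbb{E}\Bigg[\sum_{i,j,k=0}^{n}\mathbbm{1}\bigl[\|\mathcal{S}^1_i-\mathcal{S}^2_j\|\le \sqrt{n}^{\,1-\delta}\bigr]\,G_D(\mathcal{S}^1_i-\mathcal{S}^2_j)\,G_D(\mathcal{S}^2_j-\mathcal{S}^1_k)\Bigg],
\]
with the segment structure gone. The paper then sums out $k$ using the Markov property and \eqref{green**} at a cost of $\log n$, and bounds the remaining double sum $\sum_{i,j}\mathbbm{1}[\|\mathcal{S}^1_i-\mathcal{S}^2_j\|\le\sqrt{n}^{\,1-\delta}]\,G_D(\mathcal{S}^1_i-\mathcal{S}^2_j)$ by splitting into $\min(i,j)\le n^{1-\delta/2}$ versus its complement and applying \eqref{gre2}.

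Your strong-Markov scheme is workable in principle but is more delicate than needed, and your sketch has a loose end: the claimed decoupling ``(i) hitting probability $\times$ (ii) sum after $\tau$'' does not account for the portion of $\sum_{x^2\in\mathcal{S}^2_{\beta,j}}$ at times \emph{before} $\tau$ (there the indicator is already determined by the future, so the sum does not factor). That piece, and the vague ``triple Green's function convolution'' near the entrance ball where $G_D(x^1_1-x^2)$ can blow up, would each require a separate argument. All of these complications evaporate once you use the diameter observation.
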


\begin{proof}
By \eqref{gre2}, we remark that for any point $a$ that,
\begin{equation}\label{green**}
\mathbb{E} \left[ \sum_{x^2 \in \mathcal{S}^2}G_D(a -x^2)\right] 
\le \sum_{j=0}^n \sum_{i=j}^\infty \mathbb{P}(\mathcal{S}_i=a)
\lesssim \sum_{j=0}^n \sum_{i=j}^\infty  i_+^{-2}
\lesssim \log n,
\end{equation}
where $i_+:=1\vee i$. 
By symmetry, 
\begin{equation*}
\begin{aligned}
 \mathbb{E}[\mathcal{E}_3]
\le & \mathbb{E}\bigg[\sum_{i,j,k=0}^n  [1 - \mathbbm{1}[\|\mathcal{S}^1_i - \mathcal{S}^2_j\| \ge \sqrt{n}^{1-\delta}] \mathbbm{1}[\|\mathcal{S}^1_k - \mathcal{S}^2_j\| \ge \sqrt{n}^{1-\delta}  ] ] \\ & \hspace{2cm} \times G_D(\mathcal{S}^1_i - \mathcal{S}^2_j)  G_D(\mathcal{S}^2_j - \mathcal{S}^1_k)\bigg]\\
\le & 2 \mathbb{E}\bigg[\sum_{i,j,k=0}^n  
\mathbbm{1}[\|\mathcal{S}^1_i - \mathcal{S}^2_j\| \le \sqrt{n}^{1-\delta}]
G_D(\mathcal{S}^1_i - \mathcal{S}^2_j) 
G_D(\mathcal{S}^2_j-\mathcal{S}^1_i - (\mathcal{S}^1_k-\mathcal{S}^1_i))\bigg].
\end{aligned}
\end{equation*}
By \eqref{green**} and Markov's property, it is bound by 
\begin{equation*}
\begin{aligned}
  & C (\log n) \times  \mathbb{E}\left[\sum_{i,j=0}^n  
\mathbbm{1}[\|\mathcal{S}^1_i - \mathcal{S}^2_j\| \le \sqrt{n}^{1-\delta}]
G_D(\mathcal{S}^1_i - \mathcal{S}^2_j) \right]. 
\end{aligned}
\end{equation*}
Now, by \cite[Lemma 4.1]{DO},
\begin{equation*}
\begin{aligned}
   \mathbb{E}\left[\sum_{i,j=0}^{n^{1-\delta/2}}  
\mathbbm{1}[\|\mathcal{S}^1_i - \mathcal{S}^2_j\| \le \sqrt{n}^{1-\delta}]
G_D(\mathcal{S}^1_i - \mathcal{S}^2_j) \right]
\le \mathbb{E}\left[\sum_{i,j=0}^{n^{1-\delta/2}} 
G_D(\mathcal{S}^1_i - \mathcal{S}^2_j) \right]
 \lesssim n^{1-\delta/2}
\end{aligned}
\end{equation*}
and 
\begin{equation*}
\begin{aligned}
&\mathbb{E}\left[\sum_{i=n^{1-\delta/2}}^n \sum_{j=0}^n  
\mathbbm{1}[\|\mathcal{S}^1_i - \mathcal{S}^2_j\| \le \sqrt{n}^{1-\delta}]
G_D(\mathcal{S}^1_i - \mathcal{S}^2_j) \right]\\
\le & n  \max_{y \in \Z^4} \mathbb{E}\left[\sum_{i=n^{1-\delta/2}}^n  
\mathbbm{1}[\|\mathcal{S}^1_i - y\| \le \sqrt{n}^{1-\delta}]
G_D(\mathcal{S}^1_i - y) \right]\\
\lesssim &   n \max_{y \in \Z^4} 
\sum_{i=n^{1-\delta/2}}^n 
\sum_{\|x-y\|\le \sqrt{n}^{1-\delta}}\|x-y\|^{-2} \mathbb{P}(\mathcal{S}^1_i=x).
\end{aligned}
\end{equation*}
Then again by \eqref{gre2}, it is bound by
\begin{equation*}
\begin{aligned}
C(\log n)n^{1-\delta/2}+ C n(\log n) \max_{y \in \Z^4} 
 \sum_{i=n^{1-\delta/2}}^n 
\sum_{\|x-y\|\le \sqrt{n}^{1-\delta}}\|x-y\|^{-2} i_+^{-2}
\lesssim (\log n)n^{1-\delta/2}.
\end{aligned}
\end{equation*}
Therefore, by symmetry, it completes the proof.

\end{proof}

As before, one can show the following from Markov's inequality,
\begin{cor} \label{cor:e3}
Recall the term $\mathcal{E}_3$ from \eqref{{E}_3}. Fix any $\epsilon>0$ and set $b_n= O(\log \log n)$ with $\lim_{n \to \infty} b_n = \infty$. Then, we see that,
\begin{equation}
\lim_{n \to \infty} \frac{1}{b_n} \log \mathbb{P}\left( \mathcal{E}_3 \ge \epsilon \frac{n b_n}{n (\log n)^2}\right) = -\infty.
\end{equation}

\end{cor}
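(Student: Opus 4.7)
The proof of Corollary \ref{cor:e3} is a direct application of Markov's inequality, using the first-moment bound $\mathbb{E}[\mathcal{E}_3] \lesssim n^{1-\delta'}$ that was just established in the preceding lemma. Since $\mathcal{E}_3$ is nonnegative (it is a sum of nonnegative terms: probabilities, Green's function values, and indicators), there is no need to take absolute values, and Markov's inequality applies directly.

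Concretely, the plan is: writing the threshold as $T_n := \epsilon n b_n / (\log n)^2$, Markov's inequality gives
\begin{equation*}
\mathbb{P}\bigl( \mathcal{E}_3 \ge T_n \bigr) \le \frac{\mathbb{E}[\mathcal{E}_3]}{T_n} \lesssim \frac{n^{1-\delta'} (\log n)^2}{\epsilon \, n \, b_n} = \frac{(\log n)^2}{\epsilon \, n^{\delta'} \, b_n}.
\end{equation*}
Taking logarithms and dividing by $b_n$, I obtain
\begin{equation*}
\frac{1}{b_n}\log \mathbb{P}\bigl( \mathcal{E}_3 \ge T_n \bigr) \le \frac{-\delta' \log n + 2 \log\log n - \log\epsilon - \log b_n + O(1)}{b_n}.
\end{equation*}

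Because the assumption $b_n = O(\log\log n)$ forces $\log n / b_n \to \infty$, while the other terms in the numerator are of order at most $\log\log n$, the $-\delta' \log n / b_n$ term dominates and drives the right-hand side to $-\infty$ as $n \to \infty$. This yields the claimed conclusion
\begin{equation*}
\lim_{n\to\infty}\frac{1}{b_n}\log\mathbb{P}\!\left(\mathcal{E}_3 \ge \epsilon \frac{n b_n}{(\log n)^2}\right) = -\infty.
\end{equation*}

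There is no genuine obstacle here: the entire content of the corollary has already been absorbed into the polynomial decay estimate $\mathbb{E}[\mathcal{E}_3] \lesssim n^{1-\delta'}$ from the previous lemma. The only things to check are that $\mathcal{E}_3 \ge 0$ so that first-moment Markov applies without absolute values, and that the gap of a power of $n$ between $\mathbb{E}[\mathcal{E}_3]$ and the threshold $n b_n (\log n)^{-2}$ is enough to overwhelm the factor $1/b_n$ on the outside; both are immediate from the hypothesis $b_n = O(\log\log n)$.
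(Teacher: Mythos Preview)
Your proof is correct and follows essentially the same approach as the paper, which simply states that the argument is identical to that of Corollary~\ref{cor:core12} (i.e., Markov's inequality applied to the first-moment bound $\mathbb{E}[\mathcal{E}_3]\lesssim n^{1-\delta'}$). Your explicit verification that $\mathcal{E}_3\ge 0$ and that the $-\delta'\log n/b_n$ term dominates is exactly what the paper leaves implicit.
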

The proof is similar to that of Corollary \ref{cor:core12} and will not be shown here. 

Using the previous corollaries, one can now prove Theorem \ref{thm:chipr}.

\begin{proof}[Proof of Theorem \ref{thm:chipr}]
It is clear that,

\begin{equation*}
\begin{aligned}
&\mathbb{P}\left( \chi' \ge \epsilon \frac{n b_n}{(\log n)^2} \right) \le \mathbb{P}\left( \chi'_{\beta} \ge \epsilon \frac{n b_n}{(\log n)^2} \right)\\ & \le \mathbb{P} \left( MT_n \ge \frac{\epsilon}{4} \frac{b_n n}{(\log n)^2}\right)  
+ \sum_{i=1}^3 \mathbb{P}\left( |\mathcal{E}_i| \ge \frac{\epsilon}{4} \frac{b_n n}{(\log n)^2} \right) .
\end{aligned}
\end{equation*}

By computing $\frac{1}{b_n} \log$ to both sides, we see that the conclusion of Theorem \ref{thm:chipr} is a consequence of Corollaries \ref{cor:mtn}, \ref{cor:core12}, and \ref{cor:e3}.

\end{proof}


\subsection{Proof of Lemma \ref{lem:E23}}
The proof of this lemma requires novel techniques beyond careful computations of Green's functions due to the presence of the error terms $E$ occurring from the matrix inversion. We will present the proof here.

\begin{proof}[Proof of Lemma \ref{lem:E23}]
First, we deal with $\mathcal{E}_1$. 
We have that,
\begin{equation*}
\begin{aligned}
\mathcal{E}_1&=\sum_{x^1_1 \in \mathcal{S}^1} \sum_{j=1}^{ n^{1-\beta}}  \mathcal{I}(x^1_2,j) \mathcal{I}(x^1_1,j)\mathbb{P}(R'_{x^1_1} \cap \mathcal{S}^1 = \emptyset) G_D(x^1_1 -  \tilde{x}^2_j) \\ & \times \sum_{x^1_2 \in \mathcal{S}^1} \sum_{x^2 \in \mathcal{S}^2_{\beta,j}} E_{x^2,x^1_2} \mathbb{P}(R'_{x^1_2}\cap \mathcal{S}^1 = \emptyset).
\end{aligned}
\end{equation*}
We now consider, under the indicator function $\mathcal{I}(x^1_2,j)$,
\begin{equation*}
\begin{aligned}
&\sum_{x^1_2 \in \mathcal{S}^1} \sum_{x^2 \in \mathcal{S}^2_{\beta,j}} |E_{x^2,x^1_2}| \mathbb{P}(R'_{x^1_2}\cap \mathcal{S}^1 = \emptyset)
\\& \le \sum_{x^1_2 \in \mathcal{S}^1}  \sqrt{ |\mathcal{S}^2_{\beta,j}| \sum_{x^2 \in \mathcal{S}^2_{\beta,j}} |E_{x^2,x^1_2}|^2} 
\lesssim n \sqrt{n^{2\beta}\frac{n^{2\beta}}{n^{3 -3 \delta}}} 
\lesssim n^{2\beta +3/2\delta -1/2}.
\end{aligned}
\end{equation*}
To get the first inequality, we used the Cauchy-Schwartz inequality on the sum over $\mathcal{S}^2_{\beta,j}$. 
From Lemma \ref{clm:matinv}, the matrix $\mathcal{G}^{\mathcal{S}^2_{\beta,j}}$ is positive definite and has minimum eigenvalue greater than $1/2$. 
Recall that $a_1$ is defined in $E_{x^2,x^1_2}$ in \eqref{eq:decompose}. 
Thus, the inverse matrix has $l^2\to l^2$ operator norm less than $2$. Thus, we know that, under the indicator function $\mathcal{I}(x^1_2,j)$, 
\begin{equation*}
\sqrt{ \sum_{x^2 \in \mathcal{S}^2_{\beta,j}} |E_{x^2,x^1_2}|^2} 
\le  2 \sqrt{ \sum_{x^2 \in \mathcal{S}^2_{\beta,j}} |G_D(a_1 - x^1_2) - G_D(x^2 - x^1_2)|^2 } 
\lesssim  \sqrt{ n^{\beta}  \frac{n^{2\beta}}{n^{3-3\delta}}}.
\end{equation*}
In the final inequality, we used the deterministic bound \eqref{eq:diffnbd} to bound the differences of the Green's function in the region $\mathcal{S}^2_{\beta,j}$. 
Thus, we have that, deterministically, 
\begin{equation*}
\begin{aligned}
&\bigg|\sum_{x^1_1, x^1_2 \in \mathcal{S}^1} \sum_{j=1}^{ n^{1-\beta} } \sum_{x^2 \in \mathcal{S}^2_{\beta,j}} \mathcal{I}(x^1_2,j) \mathcal{I}(x^1_1,j)\mathbb{P}(R'_{x^1_1} \cap \mathcal{S}^1 = \emptyset) G_D(x^1_1 -  \tilde{x}^2_j) \\ & \hspace{2cm}\times E_{x^2,x^1_2} \mathbb{P}(R'_{x^1_2} \cap\mathcal{S}^1 = \emptyset)\bigg|\\
& \lesssim  \sum_{x^1_1 \in \mathcal{S}^1} \sum_{j=1}^{ n^{1-\beta} }  \mathcal{I}(x^1_2,j) \mathcal{I}(x^1_1,j)\mathbb{P}(R'_{x^1_1} \cap \mathcal{S}^1 = \emptyset) G_D(x^1_1 -  \tilde{x}^2_j)  n^{2 \beta +3/2 \delta -1/2}
\\ & \lesssim  n^{2 \beta +3/2 \delta -1/2}  \sum_{i=1}^n \sum_{j=1}^{n} G_D(\mathcal{S}^1_i - \mathcal{S}^2_j).
\end{aligned}
\end{equation*}
Therefore,
\begin{equation*}
\begin{aligned}
&\mathbb{E}\bigg[\bigg|\sum_{x^1_1, x^1_2 \in \mathcal{S}^1} \sum_{j=1}^{ n^{1-\beta}} \sum_{x^2 \in \mathcal{S}^2_{\beta,j}} \mathcal{I}(x^1_2,j) \\ & \hspace{2cm} \times\mathcal{I}(x^1_1,j)\mathbb{P}(R'_{x^1_1} \cap \mathcal{S}^1 = \emptyset) G_D(x^1_1 -  \tilde{x}^2_j) E_{x^2,x^1_2} \mathbb{P}(R'_{x^1_2} \cap\mathcal{S}^1 = \emptyset)\bigg|\bigg]\\
\lesssim &  n^{2\beta +3/2 \delta -1/2} \mathbb{E}\left[ \sum_{i=1}^n \sum_{j=1}^{n} G_D(\mathcal{S}^1_i - \mathcal{S}^2_j)\right] 
\lesssim n^{2\beta + 3/2\delta + 1/2}.
\end{aligned}
\end{equation*}
The computation of the expectation in the last line above comes from \cite[Lemma 4.1]{DO}.  The value of this last line is approximately $n^{-1/2}$ the scale of the main order term (provided $\beta,\delta$ are all chosen relatively small).

Now, the other error term involving $E$ can be dealt with in a similar way to $\mathcal{E}_3$.
To recall, the other error term is,
\begin{equation*}
\begin{aligned}
|\mathcal{E}_2| \le\bigg| &\sum_{x^1_1, x^1_2 \in \mathcal{S}^2} \sum_{j=1}^{n^{1-\beta}} \sum_{x^2 \in \mathcal{S}^2_{\beta,j}} \mathcal{I}(x^1_2,j) \mathcal{I}(x^1_1,j)\mathbb{P}(R'_{x^1_1} \cap \mathcal{S}^1 = \emptyset) \\
&\times [ G_D(x^1_1 -x^2) - G_D(x^1_1 - \tilde{x}^2_j)]  E_{x^2,x^1_2} \mathbb{P}(R'_{x^1_2} \cap \mathcal{S}^1 = \emptyset)\bigg|.
\end{aligned}
\end{equation*}
We first use the improved estimate,
\begin{equation*}
|G_D(x^1_1 - x^2) - G_D(x^1_1 - \tilde{x}^2_j)| \lesssim G_D(x^1_1 - \tilde{x}^2_j)  \frac{n^{\beta}}{n^{1/2 - \delta/2}}
\end{equation*}
under the indicator function $\mathcal{I}(x^1_1,j)$. We remark here that the factor of $G_D(x^1_1- \tilde{x}^2_j)$ is an improved error term in the case that $\|x^1_1 - \tilde{x}^2_j\|$ is relatively large. With this deterministic bound in hand, bounding this error term in $E$ reduces to the error term we just treated.
\end{proof}

\section{ The Leading Term of $\chi$: Proof of Theorem \ref{thm:TL}} \label{sec:TL}

In this section, we will consider the large deviation statistics of the following  quantity,
\begin{equation*}
TL_n:=   \sum_{x^1 \in \mathcal{S}^1} \sum_{x^2 \in \mathcal{S}^2} \mathbb{P}(R'_{x^1} \cap \mathcal{S}^1= \emptyset) G_D(x^1 - x^2) \mathbb{P}(R'_{x^2} \cap  \mathcal{S}^2 = \emptyset).
\end{equation*}
We will prove Theorem \ref{thm:TL} by separately proving lower and upper bounds for the asymptotic moments.

\subsection{Introduction of the Auxiliary $TL'_n$}

For technical reasons, $TL_n$ is not the most convenient quantity to manipulate. Instead, we consider the following auxiliary quantity. We let $\mathcal{S}^{k,i}$ denote the portion of the random walk in between the part of the random walk $\mathcal{S}^k \left[(i-1) \frac{n}{b_n}, i \frac{n}{b_n}\right]$ and 
\begin{equation}\label{eq:deftlprn}
TL'_n:= \sum_{i,j=1}^{b_n} \sum_{x^1 \in \mathcal{S}^{1,i}} \sum_{x^2 \in \mathcal{S}^{2,j}} \mathbb{P}(R'_{x^1} \cap \mathcal{S}^{1,i} = \emptyset) G_D(x^1- x^2) \mathbb{P}(R'_{x^2} \cap \mathcal{S}^{2,j} = \emptyset).
\end{equation}

We have the following relationship between $TL_n$ and $TL'_n$.
\begin{prop} \label{prop:TltoTlpr}
Let $b_n$ be a sequence satisfying $b_n = O(\log \log n)$ and $\lim_{n \to \infty} b_n = \infty$. Fix $\lambda \ge 0$. Then, we have that,
\begin{equation}
\lim_{n \to \infty}\frac{1}{b_n} \log \mathbb{P}\left( TL'_n \ge \lambda \frac{n b_n}{(\log n)^2} \right) = \lim_{n \to \infty} \frac{1}{b_n}\log \mathbb{P}\left( TL_n \ge \lambda \frac{n b_n}{(\log n)^2} \right).
\end{equation}
\end{prop}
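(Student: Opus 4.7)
The first observation is the pointwise inequality $TL_n \le TL'_n$. This is immediate from the fact that $\mathcal{S}^{k,\ell} \subseteq \mathcal{S}^k$ and hence $\mathbb{P}(R'_x \cap \mathcal{S}^k = \emptyset) \le \mathbb{P}(R'_x \cap \mathcal{S}^{k,\ell} = \emptyset)$ for each $x$ and each index $\ell$. Consequently $\mathbb{P}(TL_n \ge \lambda \frac{nb_n}{(\log n)^2}) \le \mathbb{P}(TL'_n \ge \lambda \frac{nb_n}{(\log n)^2})$ for every $\lambda \ge 0$, which already yields one of the two inequalities (in both $\liminf$ and $\limsup$ forms) of the desired identity.

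For the reverse direction, the plan is to control the nonnegative difference $D_n := TL'_n - TL_n$. Writing $ab - cd = a(b-d) + (a-c)d$ with $a = \mathbb{P}(R'_{x^1} \cap \mathcal{S}^{1,i} = \emptyset)$, $b = \mathbb{P}(R'_{x^2} \cap \mathcal{S}^{2,j} = \emptyset)$, $c = \mathbb{P}(R'_{x^1} \cap \mathcal{S}^1 = \emptyset)$, and $d = \mathbb{P}(R'_{x^2} \cap \mathcal{S}^2 = \emptyset)$, I split $D_n$ into two sums, each involving a probability of the form $\mathbb{P}(R'_{x} \cap \mathcal{S}^{k,\ell} = \emptyset,\ R'_x \cap (\mathcal{S}^k \setminus \mathcal{S}^{k,\ell}) \ne \emptyset)$ in place of one of the non-intersection factors. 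Structurally this is exactly the third-order intersection probability appearing in the definition \eqref{eq:defchipr} of $\chi'$: the piece $\mathcal{S}^{k,\ell}$ plays the role of one random walk range and $\mathcal{S}^k \setminus \mathcal{S}^{k,\ell}$ plays the role of the other. The mild subtlety is that $\mathcal{S}^{k,\ell}$ and $\mathcal{S}^k \setminus \mathcal{S}^{k,\ell}$ are pieces of a single walk rather than independent copies, but the Markov property --- together with the observation that $\mathcal{S}^k \setminus \mathcal{S}^{k,\ell}$ is the union of at most two segments that, conditional on endpoints, are independent of $\mathcal{S}^{k,\ell}$ --- allows one to reduce to the independent-walks setting with only a constant loss.

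Given this reduction, the path decomposition \eqref{eq:probdecomp}, the splitting into $n^{\beta}$-intervals, and the matrix inversion argument carried out in Section \ref{sec:ProofofThmchipr} apply to each summand of $D_n$. Since each piece has length $n/b_n$ and there are $b_n^2$ pairs $(i,j)$, the resulting moment bound is analogous to Lemma \ref{lem:MTn} but multiplied by $b_n^2 = O((\log \log n)^2)$: one obtains $\mathbb{E}[D_n^m] \lesssim C_m\, n^m (\log \log n)^{C'm} / (\log n)^{3m}$ for every fixed $m$. Markov's inequality then gives, as in Corollary \ref{cor:mtn},
\begin{equation*}
\lim_{n \to \infty} \frac{1}{b_n} \log \mathbb{P}\!\left(D_n \ge \epsilon \frac{n b_n}{(\log n)^2}\right) = -\infty \qquad \text{for every } \epsilon > 0.
\end{equation*}

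To close the proof, I combine the two ingredients via the union bound
\begin{equation*}
\mathbb{P}\!\left(TL'_n \ge \lambda \tfrac{n b_n}{(\log n)^2}\right) \le \mathbb{P}\!\left(TL_n \ge (\lambda - \epsilon) \tfrac{n b_n}{(\log n)^2}\right) + \mathbb{P}\!\left(D_n \ge \epsilon \tfrac{n b_n}{(\log n)^2}\right),
\end{equation*}
take $\frac{1}{b_n}\log$ and then $n \to \infty$, and finally send $\epsilon \to 0$, using the linearity $I_4(\lambda) = \frac{2}{\pi^4}\tilde{\kappa}(4,2)^{-4}\lambda$ (so that $I_4(\lambda - \epsilon) \to I_4(\lambda)$) to conclude the equality of limits claimed in the proposition. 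The main technical obstacle is in verifying that the analysis of Section \ref{sec:ProofofThmchipr} really does go through when $\mathcal{S}^{1,i}$ and $\mathcal{S}^1 \setminus \mathcal{S}^{1,i}$ are dependent segments of the same walk, and that the error bound remains uniform enough in $(i,j)$ that the $b_n^2$-fold summation does not spoil the subleading $(\log n)^{-3}$ scaling relative to the target $(\log n)^{-2}$ scale of moderate deviations.
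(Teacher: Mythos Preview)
Your decomposition $ab - cd = a(b-d) + (a-c)d$ tacitly assumes that each pair $(x^1,x^2)$ contributes exactly one term to $TL'_n$, i.e.\ that there is a unique $(i,j)$ with $x^1 \in \mathcal{S}^{1,i}$ and $x^2 \in \mathcal{S}^{2,j}$. This is false: a point $x^1$ can belong to several segments $\mathcal{S}^{1,i_1}, \mathcal{S}^{1,i_2}$ with $i_1 \ne i_2$ (the walk may return). What your identity actually computes is $TL'_n - \widetilde{TL}_n$, where $\widetilde{TL}_n$ is the sum of $c\,G_D\,d$ over all quadruples $(i,j,x^1,x^2)$; you still owe the bound on $\widetilde{TL}_n - TL_n$. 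The paper handles exactly this via two extra terms $J_3,J_4$ collecting the contribution of $x^1 \in \mathcal{S}^{1,i_1}\cap\mathcal{S}^{1,i_2}$ (and symmetrically for $x^2$); a first-moment estimate using $\mathbb{E}[|\mathcal{S}^{1,i_1}\cap\mathcal{S}^{1,i_2}|]\lesssim \log n$ and $\mathbb{E}[\sum_{x^2}G_D(x^1-x^2)]\lesssim \log n$ gives $\mathbb{E}[J_3]\lesssim b_n^2(\log n)^2$, which is negligible at scale $nb_n/(\log n)^2$. This is easy to add, but as written your splitting of $D_n$ into ``two sums'' is incomplete.

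A smaller point: invoking the explicit form of $I_4$ at the end is circular, since Proposition~\ref{prop:TltoTlpr} is an input to Theorem~\ref{thm:TL}, which is where $I_4$ is identified. What you actually prove (once $J_3,J_4$ are included) is the stronger statement $\lim_{n\to\infty}\frac{1}{b_n}\log\mathbb{P}(TL'_n-TL_n \ge \epsilon\,nb_n/(\log n)^2)=-\infty$ for every $\epsilon>0$; from this and $TL_n\le TL'_n$ the equality of $\liminf$'s and $\limsup$'s follows directly, with no need to know the value of the limit. Aside from these two issues your treatment of the $J_1,J_2$-type terms via the analysis of Section~\ref{sec:ProofofThmchipr} matches the paper's approach.
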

\begin{proof}

We remark that  $TL'_n \ge TL_n$. This immediately shows that,
\begin{equation*}
\mathbb{P}\left(TL'_n \ge \lambda \frac{n b_n}{(\log n)^2} \right) \ge \mathbb{P}\left(TL_n \ge \lambda \frac{n b_n}{(\log n)^2} \right).
\end{equation*}

To derive the opposite inequality, we first observe that $TL'_n - TL_n$ can be bounded from above by,
\begin{equation} \label{eq:difftlpr}
\begin{aligned}
& TL' _n- TL_n \\
\le & \sum_{i,j=1} ^{b_n} \sum_{x^1 \in \mathcal{S}^{1,i}} \sum_{x^2 \in \mathcal{S}^{2,j}} \mathbb{P}(R'_{x^1} \cap \mathcal{S}^{1,i} =\emptyset, R'_{x^1} \cap \mathcal{S}^1 \ne \emptyset) G_D(x^1 - x^2) \mathbb{P}(R'_{x^2} \cap \mathcal{S}^{2,j} = \emptyset) \\
&+ \sum_{i,j=1}^{b_n} \sum_{x^1 \in \mathcal{S}^{1,i}} \sum_{x^2 \in \mathcal{S}^{2,j}}  \mathbb{P}(R'_{x^1} \cap \mathcal{S}^1 = \emptyset) G_D(x^1- x^2) \mathbb{P}(R'_{x^2} \cap \mathcal{S}^{2,j} = \emptyset, R'_{x^2} \cap \mathcal{S}^2 \ne \emptyset)\\
& + \sum_{i_1 \ne i_2,j=1}^{b_n} \sum_{x^1 \in \mathcal{S}^{1,i_1} \cap \mathcal{S}^{1,i_2}}  \sum_{x^2 \in \mathcal{S}^{2,j}}\mathbb{P}(R'_{x^1} \cap \mathcal{S}^1 = \emptyset) G_D(x^1 - x^2)  \mathbb{P}(R'_{x^2} \cap \mathcal{S}^2 = \emptyset)\\
& +  \sum_{i,j_1 \ne j_2=1}^{b_n} \sum_{x^2 \in \mathcal{S}^{2,j_1} \cap \mathcal{S}^{2,j_2}} \sum_{x^1 \in \mathcal{S}^{1,i}} \mathbb{P}(R'_{x^1} \cap \mathcal{S}^1 = \emptyset) G_D(x^1 - x^2)  \mathbb{P}(R'_{x^2} \cap \mathcal{S}^2 = \emptyset)\\
=:&J_1+J_2+J_3+J_4.
\end{aligned}
\end{equation}
For each line on the right hand side above, we will show that for $1\le i\le 4$
\begin{equation}\label{eq:genericrel}
\lim_{n \to \infty} \frac{1}{b_n} \log \mathbb{P}\left( J_i \ge \epsilon \frac{n b_n}{(\log n)^2} \right) = -\infty.
\end{equation}

The first two lines of the right hand side of \eqref{eq:difftlpr} are very similar to the type of error terms we have dealt with in Section \ref{sec:ProofofThmchipr}.
One can follow the analysis of said section to show the relation \eqref{eq:genericrel} for these two lines.

The last two lines will be controlled by bounding the moments and applying Markov's inequality.
We present the analysis with the term on the third line, since the term on the fourth line can be dealt with similarly.
We first bound all the probability terms on the line by $1$.

By \eqref{gre2} and \eqref{green**}, 
for any $x$ and $y$, 
\begin{equation*}
\begin{aligned}
\mathbb{E} \left[\sum_{i,j=0}^n \mathbbm{1}\{\mathcal{S}^1_i+x=\mathcal{S}^2_j+y\}\right]
= &\sum_{i,j=0}^n \mathbb{P}( \mathcal{S}^1_{i+j}=y-x)\\
\lesssim & \sum_{i,j=0}^n  (i+j)_+^{-2} \lesssim \log n.
\end{aligned}
\end{equation*}
Thus, we see that,
\begin{equation*} \label{eq:SelfInterror}
\begin{aligned}
\mathbb{E}\left[  J_3 \right] 
 &\le \mathbb{E}_{\mathcal{S}^1}\left[ \sum_{i_1 \ne i_2 = 1}^{b_n} \sum_{x^1 \in \mathcal{S}^{1,i_1} \cap \mathcal{S}^{1,i_2}}  \mathbb{E}_{\mathcal{S}^2} \left[ \sum_{x^2 \in \mathcal{S}^2} G_D(x^1- x^2)\right] \right] \\
 &\lesssim  \mathbb{E}\left[ \sum_{i_1 \ne i_2=1}^{b_n} I_{i_1, i_2} \log n \right]
\lesssim b_n^2 (\log n)^2.
\end{aligned}
\end{equation*}
On the second line $\mathbb{E}_{\mathcal{S}^i}$ is the expectation with respect to only the randomness of $\mathcal{S}^i$. $I_{i_1,i_2}$ is the number of points of intersection between $\mathcal{S}^1_{i_1}$ and $\mathcal{S}^1_{i_2}$.  We remark that $\mathcal{S}^1_{i_2} - \mathcal{S}^1_{i_2 \frac{n}{b_n}}$ and $\mathcal{S}^1_{i_1} - \mathcal{S}^1_{i_1 \frac{n}{b_n}}$ are independent random walks.
Then, $\mathbb{E}[I_{i_1, i_2}] \lesssim \log n$ by a similar computation to \cite[Proposition 4.3.1]{LA91}. 
Thus, this term will not contribute to the large deviation statistics of $TL'_n$ on the scale of $\frac{n b_n}{(\log n)^2}$.

\end{proof}

The quantity $TL'_n$ is easier to deal with since we can obtain exact moment asymptotics. Namely,
\begin{prop} \label{prop:asympmom}
Recall $TL'_n$ from equation \eqref{eq:deftlprn}. Let $b_n=O(\log \log n) $ and $\lim_{n \to \infty} b_n = \infty$. Then, for any $\theta>0$, we have the following exact moment asymptotics on $TL'_n$: 
\begin{equation} \label{eq:asympmom}
 \lim_{n \to \infty}\frac{1}{b_n} \log \sum_{m=0}^{\infty} \frac{1}{m!} \theta^m  \left(\frac{\sqrt{b_n} \log n}{\sqrt{n}}\right)^m \mathbb{E}[(TL'_n)^m]^{1/2} =\tilde{\kappa}(4,2)^{4} \frac{\pi^4 \theta^2}{8}.
\end{equation}

\end{prop}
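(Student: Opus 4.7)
The plan is to compute the moment asymptotics by decoupling the two independent walks via a convolutional square root of $G_D$, reducing the moments to an $\ell^2$ norm of a moment tensor, and then identifying the resulting variational problem with the Gagliardo--Nirenberg inequality.

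First I would apply the identity $G_D(x-y) = \sum_{a \in \Z^4} \tilde G_D(x-a)\tilde G_D(y-a)$ to rewrite
\[
TL'_n = \sum_{a \in \Z^4} F^1(a)F^2(a),\qquad F^k(a) := \sum_{i=1}^{b_n}\sum_{x \in \mathcal{S}^{k,i}} \mathbb{P}(R'_x \cap \mathcal{S}^{k,i}=\emptyset)\,\tilde G_D(x-a).
\]
Since $\mathcal{S}^1$ and $\mathcal{S}^2$ are independent and identically distributed, Fubini yields
\[
\mathbb{E}[(TL'_n)^m] \;=\; \sum_{a_1,\ldots,a_m \in \Z^4} h_m(a_1,\ldots,a_m)^2 \;=\; \|h_m\|_{\ell^2((\Z^4)^m)}^2,
\]
where $h_m(a_1,\ldots,a_m) := \mathbb{E}[\prod_{k=1}^m F^1(a_k)]$ is the $m$-th moment tensor of $F^1$. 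Hence $\mathbb{E}[(TL'_n)^m]^{1/2} = \|h_m\|_{\ell^2}$, and the proposition amounts to computing $S_n(\theta) := \sum_{m \ge 0}(\theta c_n)^m\|h_m\|_{\ell^2}/m!$ with $c_n = \sqrt{b_n}\log n/\sqrt n$.

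Next I would decompose $F^1 = \sum_{i=1}^{b_n} F^{1,i}$ into its independent pieces and apply a Leonov--Shiryaev-type partition expansion
\[
h_m(a_1,\ldots,a_m) = \sum_{\pi}\frac{b_n!}{(b_n - |\pi|)!}\prod_{B \in \pi}\mathbb{E}\Bigl[\prod_{k \in B} F^{1,1}(a_k)\Bigr].
\]
A careful Cauchy--Schwarz/dual analysis of this expansion would reduce $\|h_m\|_{\ell^2}$ to the supremum over tensor-product test functions,
\[
\|h_m\|_{\ell^2} \;\sim\; \sup_{g\,:\,\|g\|_{\ell^2}=1}\mathbb{E}[Y_g^m],\qquad Y_g := \sum_{a \in \Z^4} g(a)\,F^1(a),
\]
which by summation over $m$ gives
\[
S_n(\theta) \;\sim\; \sup_{g}\mathbb{E}[\exp(\theta c_n Y_g)] \;=\; \sup_{g}\bigl(\mathbb{E}[\exp(\theta c_n Y_g^{(1)})]\bigr)^{b_n},
\]
exploiting the independence of the pieces $Y_g^{(i)} := \sum_a g(a)F^{1,i}(a)$.

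Third, I would pass to the Brownian limit of each sub-walk of duration $n/b_n$ by rescaling $a \mapsto \sqrt{n/b_n}\, b$. The renormalized field $F^{1,1}$ converges to a Brownian functional built from the continuous Green's function $G$, while $\mathbb{P}(R'_x \cap \mathcal{S}^{1,1}=\emptyset)$ concentrates around $\pi^2/(8\log n)$. Tracking these scales through an exponential-moment (Feynman--Kac) computation as in Chen's book reduces the problem to the continuum variational problem
\[
\sup_{g_B\,:\,\|g_B\|_{L^2}=1}\Bigl\{\theta^2\cdot\tfrac{\pi^4}{8}\,M(g_B) - \tfrac12\|\nabla g_B\|_{L^2}^2\Bigr\},
\]
where $M(g_B) = \int_{(\RR^4)^2} g_B^2(x)G(x-y)g_B^2(y)\,dx\,dy$. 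By the definition of $\tilde\kappa(4,2)$, under $\|g_B\|_{L^2}=1$ we have $M(g_B) \le \tilde\kappa(4,2)^4\|\nabla g_B\|_{L^2}^2$; optimizing in $\|\nabla g_B\|_{L^2}$ produces $\tilde\kappa(4,2)^4\cdot \pi^4\theta^2/8$, matching the claim.

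The main obstacle is the reduction $\|h_m\|_{\ell^2}\sim \sup_g \mathbb{E}[Y_g^m]$ uniformly in $m$ up to $m = O(b_n)$: one must control the combinatorial weights $b_n!/(b_n-|\pi|)!$ for partitions of all sizes, eliminate sub-dominant contributions from non-pair-and-singleton partitions, and argue that the $\ell^2$ norm of the moment tensor is asymptotically saturated by smooth tensor-product profiles concentrated at the diffusive scale $\sqrt{n/b_n}$. This parallels the Bass--Chen--Rosen analysis for the 2D self-intersection local time, but the extra convolution by $\tilde G_D$ used to decouple the two walks introduces an additional spatial integration variable that renders both the combinatorics of the partition expansion and the identification of the precise $\pi^4/8$ prefactor considerably more delicate.
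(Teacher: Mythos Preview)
Your decoupling via the convolutional square root $G_D=\tilde G_D*\tilde G_D$, the identity $\mathbb{E}[(TL'_n)^m]=\|h_m\|_{\ell^2}^2$, and the Cauchy--Schwarz lower bound $\|h_m\|_{\ell^2}\ge \langle h_m,g^{\otimes m}\rangle=\mathbb{E}[Y_g^m]$ all match the paper exactly; this is precisely how the paper proves the \emph{lower} bound in \eqref{eq:asympmom}, passing from $\sum_m(\theta c_n)^m\mathbb{E}[Y_g^m]/m!=\mathbb{E}[\exp(\theta c_n Y_g)]$ to the Feynman--Kac operator $\mathfrak{B}_n$ and then to the continuum variational problem.

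The genuine gap is in your upper bound. You claim $\|h_m\|_{\ell^2}\sim\sup_{\|g\|_{\ell^2}=1}\mathbb{E}[Y_g^m]$, but $\sup_g\langle h_m,g^{\otimes m}\rangle$ is the \emph{injective} tensor norm of the symmetric tensor $h_m$, which is in general strictly smaller than the Hilbert--Schmidt norm $\|h_m\|_{\ell^2}$; the two agree only when $h_m$ is rank one. Your partition expansion does not close this gap: the Leonov--Shiryaev decomposition expresses $h_m$ as a sum of products of joint cumulant tensors over the $b_n$ independent pieces, but controlling the $\ell^2$ norm of this sum by a single tensor-product profile uniformly in $m$ up to $m\asymp b_n$ is precisely the difficulty you flag, and it does not follow from any Cauchy--Schwarz or duality argument alone. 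The paper avoids this entirely: for the upper bound it never attempts to saturate $\|h_m\|_{\ell^2}$ by product functions. Instead it establishes the uniform a~priori bound $\mathbb{E}[(TL'_n)^m]\le C^m m!\,(n/(\log n)^2)^m$ (Claim~\ref{clm:Tlprbnd}, proved via a subadditivity argument on $TL_{n,\alpha}$ and an inductive moment computation), and then uses the convergence in law $\frac{(\log n)^2}{n}TL'_n\Rightarrow \frac{\pi^4}{4}\int_0^1\!\int_0^1 G(B^1_t-B^2_s)\,dt\,ds$ together with uniform integrability to pass each moment to the Brownian limit and invoke \cite[Theorem~7.2.1]{Chenbook}. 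This sidesteps the tensor-norm issue completely.

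A smaller point: your final variational problem is misstated. The Feynman--Kac analysis of $\mathbb{E}[\exp(\theta c_n Y_g)]$ produces an additive functional with kernel $\tilde G*g$, and after supping over $\|g\|_{L^2}=1$ one obtains $\sup_g\int(\tilde G*g)h^2=\|\tilde G*h^2\|_{L^2}=M(h)^{1/2}$, not $M(h)$. With $M(h)$ in place of $M(h)^{1/2}$ your functional is linear in $\|\nabla h\|_{L^2}^2$ and is either $0$ or $+\infty$; the correct form $\theta\cdot\frac{\pi^2}{4}M(h)^{1/2}-\frac18\|\nabla h\|_{L^2}^2$ is what yields $\tilde\kappa(4,2)^4\pi^4\theta^2/8$ after optimizing.
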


As a consequence of the previous two propositions, one can now prove Theorem \ref{thm:TL}.

\begin{proof}[Proof of Theorem \ref{thm:TL}]
By \cite[Theorem 1.2.7]{Chenbook}, equation \eqref{eq:asympmom} would be equivalent to showing,
\begin{equation*}
\lim_{n \to \infty } \frac{1}{b_n} \log \mathbb{P}\left( TL'_n \ge \lambda \frac{n b_n}{(\log n)^2} \right) = - \frac{4}{\pi^4} \tilde{\kappa}(4,2)^{-4} \lambda.
\end{equation*}
Now, since by Proposition \ref{prop:TltoTlpr} we have that 
\begin{equation*}
\lim_{n \to \infty } \frac{1}{b_n} \log \mathbb{P}\left( TL_n \ge \lambda \frac{n b_n}{(\log n)^2} \right) =\lim_{n \to \infty } \frac{1}{b_n} \log \mathbb{P}\left( TL'_n \ge \lambda \frac{n b_n}{(\log n)^2} \right) ,
\end{equation*}
we complete the proof of the proposition.
\end{proof}

The remainder of this section is devoted to deriving upper and lower bounds to the quantity in equation \eqref{eq:asympmom}.

\subsection{Large Deviation Upper Bounds}

In this section, we establish the upper bound found in Proposition \ref{prop:asympmom}.

\begin{prop} \label{prop:asympboundupr}
Let $b_n$ be a sequence satisfying $b_n = O(\log \log n)$ and $\lim_{n \to \infty} b_n = \infty$. Then, for any $\theta>0$, we satisfy,
\begin{equation*}
 \limsup_{n \to \infty}\frac{1}{b_n} \log \sum_{m=0}^{\infty} \frac{1}{m!} \theta^m  \left(\frac{\sqrt{b_n} \log n}{\sqrt{n}}\right)^m \mathbb{E}[(TL'_n)^m]^{1/2} 
 \le \tilde{\kappa}(4,2)^{4} \frac{\pi^4 \theta^2}{8}.
\end{equation*}
\end{prop}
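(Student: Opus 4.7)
My plan is to combine the convolutional square-root decomposition $G_D = \tilde G_D * \tilde G_D$ (used in the introduction to symmetrize the main term of $\chi$) with a Feynman-Kac variational argument in the spirit of \cite{BCR,Chenbook}, in order to translate moment estimates for $TL'_n$ into the variational problem defining the Gagliardo-Nirenberg constant $\tilde\kappa(4,2)$.

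\textbf{Step 1 (Symmetrization via $\tilde G_D$).} Define
\begin{equation*}
F^k(a) := \sum_{i=1}^{b_n} \sum_{x \in \mathcal{S}^{k,i}} \mathbb{P}(R'_x \cap \mathcal{S}^{k,i} = \emptyset)\,\tilde G_D(x-a), \qquad k \in \{1,2\},
\end{equation*}
so that $TL'_n = \sum_{a \in \mathbb{Z}^4} F^1(a) F^2(a)$ is a symmetric bilinear pairing of two i.i.d.\ functionals of $\mathcal{S}^1$ and $\mathcal{S}^2$. By independence,
\begin{equation*}
\mathbb{E}[(TL'_n)^m] = \sum_{a_1, \ldots, a_m \in \mathbb{Z}^4} \mathbb{E}\Big[\prod_{k=1}^m F(a_k)\Big]^{2},
\end{equation*}
so that $\mathbb{E}[(TL'_n)^m]^{1/2}$ is the $\ell^2$-norm of the $m$-point correlation function of the single-walk functional $F$.

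\textbf{Step 2 (Brownian scaling and factorization over sub-pieces).} Rescale each sub-walk $\mathcal{S}^{1,i}$ (of length $n/b_n$) to a Brownian path on $[0,1]$. On typical configurations, $\mathbb{P}(R'_x \cap \mathcal{S}^{1,i} = \emptyset) = \frac{\pi^2}{8\log n}(1+o(1))$, so $F$ reduces, up to normalization, to the sum over sub-pieces of Brownian occupation functionals $\int_0^1 \tilde G(B^i_s - a)\,ds$. Since the $b_n$ sub-pieces are independent, the $m$-point correlation of $F$ factorizes as a sum over assignments of the $m$ test points $a_1, \ldots, a_m$ to the $b_n$ sub-pieces, each factor being a single-piece expectation. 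Combinatorial counting of these assignments produces the combinatorial factors that ultimately yield the linear-in-$b_n$ exponent.

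\textbf{Step 3 (Feynman-Kac and the Gagliardo-Nirenberg constant).} The single-piece moment asymptotics follow from the Feynman-Kac formula applied to the Brownian occupation functional, reducing the computation to the principal eigenvalue of a Schr\"odinger-type operator whose potential involves convolution with $G$. The Gagliardo-Nirenberg inequality
\begin{equation*}
\Big[\int_{(\mathbb{R}^4)^2} g^2(x)\,G(x-y)\,g^2(y)\,dx\,dy\Big]^{1/4} \le \tilde\kappa(4,2)\,\|g\|_{L^2}^{1/2}\,\|\nabla g\|_{L^2}^{1/2}
\end{equation*}
identifies the supremum of the resulting variational problem with $\tilde\kappa(4,2)^4$, and combining this with the normalization factor $\frac{\pi^2}{8\log n}$ from the non-intersection probability and the identity $\tilde G_D * \tilde G_D = G_D$ produces an exponent of $\tilde\kappa(4,2)^4 \pi^4\theta^2/8$ per sub-piece. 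Multiplying the $b_n$ independent contributions and taking $\frac{1}{b_n}\log$ yields the claimed upper bound.

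\textbf{Main obstacle.} The most delicate step is Step 2, where one cannot simply replace $\mathbb{P}(R'_x \cap \mathcal{S}^{k,i} = \emptyset)$ by its mean value $\frac{\pi^2}{8\log n}$ inside high moments: as stressed in the introduction, at the large-deviation scale the walk may reorganize so that this probability is far from its mean. As in the treatment of $\chi'$ in Section \ref{sec:ProofofThmchipr}, one must exploit a monotonicity argument to control the atypical configurations while preserving the exact value of the Gagliardo-Nirenberg constant, and verify that these deviations contribute only to subleading order in the $\frac{1}{b_n}\log$ limit.
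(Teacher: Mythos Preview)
Your proposal outlines a direct moment computation via the convolutional square root and a Feynman--Kac variational argument. This is essentially the route the paper takes for the \emph{lower} bound (Section~\ref{sec:TL}, subsection on the lower bound for $TL'_n$), but it is \emph{not} how the paper proves the upper bound in Proposition~\ref{prop:asympboundupr}. The paper's upper bound is much more indirect and thereby completely sidesteps the obstacle you correctly flag in your final paragraph.

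The paper's argument is in two pieces. First (Claim~\ref{clm:Tlprbnd}), it proves a \emph{rough} a~priori moment bound $\mathbb{E}[(TL'_n)^m]\le C^m m!\,(n/(\log n)^2)^m$ with no attempt to get the sharp constant; this is done by an induction on $m$, localizing the non-intersection events to short neighborhoods $\mathcal{S}[i-n^\alpha,i+n^\alpha]$, conditioning on the walk positions at the boundaries of those neighborhoods so that the non-intersection probabilities decouple, and then summing them out using $\mathbb{E}[\mathbb{P}(R'\cap\mathcal{S}[i-n^\alpha,i+n^\alpha]=\emptyset)]\lesssim 1/\log n$. Second (Lemma~\ref{lem:Apriorbnd}), it uses this rough bound only to justify dominated convergence: the rescaled variable $\frac{(\log n)^2}{n}TL'_n$ converges in distribution to $\frac{\pi^4}{4}\int_0^1\int_0^1 G(B^1_t-B^2_s)\,dt\,ds$ (as in \cite{As5}), and the uniform moment bound upgrades this to convergence of all moments. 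The sharp constant then comes for free from the known large-deviation result for the Brownian quantity (\cite{BCR2}, \cite{AO}, or \cite[Theorem~7.2.1]{Chenbook}), where the link to $\tilde\kappa(4,2)$ is already established.

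The point is that your obstacle --- replacing $\mathbb{P}(R'_x\cap\mathcal{S}^{k,i}=\emptyset)$ by $\frac{\pi^2}{8\log n}$ inside high moments --- never arises in the paper's upper bound, because the sharp constant is imported from the Brownian side rather than extracted from a direct discrete moment computation. Your plan is not wrong in spirit, but as written it does not resolve the obstacle you raise; carrying it through would require the same kind of decoupling-by-conditioning that underlies Claim~\ref{claim:TL}, and even then you would be reproving the Brownian large-deviation constant rather than citing it. The paper's two-step route (rough bound $+$ weak convergence) is both shorter and cleaner for the upper bound.
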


The proposition above is an immediate consequence of the following lemma and claim.

\begin{clm} \label{clm:Tlprbnd}
There exists some constant $C>0$ such that for all $n, m>0$, we have that,
\begin{equation}\label{eq:apriormombnd}
\mathbb{E}[(TL'_n)^m] \le C^m m! \left( \frac{n}{(\log n)^2}\right)^m.
\end{equation}
\end{clm}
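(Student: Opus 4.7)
The plan is to bound $\mathbb{E}[(TL'_n)^m]$ by a direct moment expansion in which each non-intersection probability is realized probabilistically via an auxiliary independent random walk, and to extract the $(\log n)^{-2m}$ factor from the resulting joint avoidance events. Expanding $(TL'_n)^m$ using the time parametrization of the two walks and introducing $2m$ independent auxiliary walks $R^{(k)}_1, R^{(k)}_2$ (with $R^{(k)}_1$ starting at $\mathcal{S}^1_{s_k}$ and $R^{(k)}_2$ at $\mathcal{S}^2_{t_k}$, and independent of everything else), the $m$-th moment becomes
\[
\mathbb{E}[(TL'_n)^m] \le \sum_{\vec s,\vec t \in [1,n]^m} \mathbb{E}\!\left[\prod_{k=1}^m \mathbbm{1}(R^{(k)}_1 \cap \mathcal{S}^{1, i(s_k)} = \emptyset)\, G_D(\mathcal{S}^1_{s_k} - \mathcal{S}^2_{t_k})\, \mathbbm{1}(R^{(k)}_2 \cap \mathcal{S}^{2, j(t_k)} = \emptyset)\right],
\]
where $i(s)$ and $j(t)$ denote the segment indices of the time points.

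The heart of the argument is to bound this joint expectation by iterative application of the Markov property. Ordering the $2m$ times $s_k, t_k$ and peeling them off one at a time, each Green's function factor $G_D(\mathcal{S}^1_{s_k} - \mathcal{S}^2_{t_k})$ is replaced by a single-step Green's function via Markov decomposition, while each auxiliary walk constraint contributes a non-intersection probability of $O(1/\log n)$ via the classical four-dimensional estimate $\mathbb{P}(\mathcal{S}^1(0,\infty) \cap \mathcal{S}^2[0,n] = \emptyset) \lesssim 1/\log n$ (this is the estimate \cite[Thm. 3.5.1]{LA91} used elsewhere in the paper). Combining with the standard moment bound $\mathbb{E}[(\sum_{s,t=1}^n G_D(\mathcal{S}^1_s - \mathcal{S}^2_t))^m] \le C^m m!\, n^m$ (proved by time-ordering plus iterated Markov estimates, in the spirit of Lemma \ref{iterated:bound}), the $m!$ combinatorial factor from time-orderings, and the $(\log n)^{-2m}$ from the avoidance events yields the claim $\mathbb{E}[(TL'_n)^m] \le C^m m!\, (n/(\log n)^2)^m$.

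The main obstacle is the joint avoidance analysis when several time indices fall into the same segment, so that multiple auxiliary walks $R^{(k_1)}_1, \ldots, R^{(k_r)}_1$ must simultaneously avoid the same set $\mathcal{S}^{1,i}$. Conditionally on $\mathcal{S}^{1,i}$ these events are independent with joint probability $\prod_\alpha e_i(\mathcal{S}^1_{s_{k_\alpha}})$, where $e_i(x) := \mathbb{P}(R' \cap \mathcal{S}^{1,i}=\emptyset \mid \mathcal{S}^{1,i})$ is the equilibrium-measure-like weight. However $e_i(x) \le 1$ pointwise and not $\le 1/\log n$ uniformly, so the expected $(1/\log n)^r$ decay for a cluster of $r$ repeated indices cannot be extracted termwise; it must be obtained in an integrated sense, using the capacity identity $\sum_{x \in \mathcal{S}^{1,i}} e_i(x) = \mathrm{Cap}(\mathcal{S}^{1,i})$ together with iterated Cauchy-Schwarz to decouple the $e_i$ factors. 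Carefully tracking the combinatorics of such repetitions across all $m$ indices, and showing that the repeated-index contribution does not grow faster than the simple product of single-index estimates, is the technical heart of the proof, analogous in spirit to \cite[Theorem 5.4]{BCR}.
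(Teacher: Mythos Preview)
Your proposal identifies the right difficulty but the suggested fix is not sufficient, and the paper's actual route is substantially different.

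The paper does \emph{not} work directly with $TL'_n$. Instead it first dominates $TL'_n$ by a localized quantity $TL_{n,\alpha}$ in which each non-intersection probability $\mathbb{P}(R'_{\mathcal{S}^1_s}\cap \mathcal{S}^{1,i}=\emptyset)$ is replaced by $\mathbb{P}(R'_{\mathcal{S}^1_s}\cap \mathcal{S}^1[s-n^\alpha,s+n^\alpha]=\emptyset)$, a window of length $n^\alpha$ with $\alpha$ small. This localization is the key device you are missing: once the windows are short, whenever two time indices are separated by at least $n^{3\alpha}$ the corresponding local windows are disjoint, and conditioning on the window endpoints makes the non-intersection events genuinely independent of each other \emph{and} of the Green's function factors. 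A local CLT comparison then lets one sum freely over the endpoint positions and extract exactly one factor of $C/\log n$ per probability term. The residual terms where some pair of indices is closer than $n^{3\alpha}$ are handled by an induction on $m$ (dropping one factor and using a bridge Green's function bound), not by a direct clustering analysis.

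Your proposed fix for the clustering problem --- the capacity identity $\sum_x e_i(x)=\mathrm{Cap}(\mathcal{S}^{1,i})$ together with iterated Cauchy--Schwarz --- does not decouple the $e_i$ weights from the Green's function factors $G_D(\mathcal{S}^1_{s_k}-\mathcal{S}^2_{t_k})$, which depend on the same points; Cauchy--Schwarz applied here would destroy the $m!$ combinatorics and at best give $(m!)^2$. Relatedly, you gloss over how one obtains $m!$ rather than $(m!)^2$: the paper does this via a separate subadditivity argument (writing $TL_{n,\alpha}=\sum_a \mathcal F^a_{\mathcal S^1}\mathcal F^a_{\mathcal S^2}$ through the convolutional square root $\tilde G_D$ and invoking \cite[Section 6.1]{Chenbook}), which upgrades the weak bound $(m!)^2$ to $m!$. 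Without both the $n^\alpha$-localization and this subadditivity step, your sketch does not close.
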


The proof of the above claim will be postponed to later. We now present the second necessary lemma.

\begin{lem} \label{lem:Apriorbnd}
For any $\theta>0$,
\begin{equation*}
 \limsup_{n \to \infty}\frac{1}{b_n} \log \sum_{m=0}^{\infty} \frac{1}{m!} \theta^m  \left(\frac{\sqrt{b_n} \log n}{\sqrt{n}}\right)^m \mathbb{E}[(TL'_n)^m]^{1/2} \le \tilde{\kappa}(4,2)^{4} \frac{\pi^4 \theta^2}{8}.
\end{equation*}
\end{lem}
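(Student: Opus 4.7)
The plan is to combine the crude a priori moment bound from Claim \ref{clm:Tlprbnd} (controlling the tail of the series in $m$) with a refined asymptotic for $\mathbb{E}[(TL'_n)^m]$ in the moderate range $m \lesssim b_n$, where the precise constant $\tfrac{\pi^4}{8}\tilde\kappa(4,2)^4$ enters. Substituting the Claim's bound $\mathbb{E}[(TL'_n)^m]\le C^m m!(n/(\log n)^2)^m$ directly into the series yields
\begin{equation*}
\sum_{m=0}^\infty \frac{\theta^m}{m!}\Bigl(\frac{\sqrt{b_n}\log n}{\sqrt n}\Bigr)^m \mathbb{E}[(TL'_n)^m]^{1/2} \le \sum_{m=0}^\infty \frac{(\theta\sqrt{Cb_n})^m}{\sqrt{m!}},
\end{equation*}
and Stirling identifies the right-hand side with $\exp(\tfrac{C\theta^2}{2}b_n+o(b_n))$: a finite $\limsup$, but with the wrong constant. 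I therefore split the series at $m^\star = Mb_n$ for a large parameter $M$, controlling the head by the refined asymptotics and the tail by the Claim.

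For the tail $m > m^\star$, Stirling applied term-by-term gives $(\theta\sqrt{Cb_n})^m/\sqrt{m!} \le (eC\theta^2/M)^{m/2}$, so once $M > eC\theta^2$ the tail sum is at most $\exp(-\gamma(M)b_n)$ with $\gamma(M)\to\infty$ as $M\to\infty$, negligible beside $\exp(b_n\tfrac{\pi^4}{8}\tilde\kappa(4,2)^4\theta^2)$. For the head $m\le m^\star$, I symmetrize via $G_D=\tilde G_D *\tilde G_D$: setting $F_k(a):=\sum_{i=1}^{b_n}\sum_{x\in\mathcal S^{k,i}}\mathbb{P}(R'_x\cap\mathcal S^{k,i}=\emptyset)\tilde G_D(x-a)$ so that $F_1,F_2$ are i.i.d., Cauchy--Schwarz in $a$ together with independence gives
\begin{equation*}
\mathbb{E}[(TL'_n)^m]^{1/2} \le \mathbb{E}\Bigl[\bigl(\textstyle\sum_a F(a)^2\bigr)^{m/2}\Bigr].
\end{equation*}
Then (i) replace each $\mathbb{P}(R'_x\cap\mathcal S^{k,i}=\emptyset)$ by its mean $(1+o(1))\pi^2/(8\log n)$, extracting the factor $(\pi^2/(8\log n))^m$, and (ii) apply the invariance principle on each block of length $n/b_n$, together with the variational constant in \cite[(6)]{FY15}, to identify the leading order of the remaining Brownian quadratic-form moment in terms of $\tilde\kappa(4,2)^4$. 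The refined head then contributes $\exp(b_n(\tfrac{\pi^4}{8}\tilde\kappa(4,2)^4\theta^2+o(1)))$, which combined with the negligible tail yields the claimed upper bound.

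The main obstacle is step (i): in the moderate deviation regime the walks can favor configurations making $\mathbb{P}(R'_x\cap \mathcal S^{k,i}=\emptyset)$ atypical, so the replacement by the mean must be justified uniformly for $m \le Mb_n$. This is precisely the point, flagged in the strategy section, that requires the monotonicity argument developed in Section \ref{sec:ProofofThmchipr}; once it is available, the remaining Brownian moment computation reduces to a Feynman--Kac identification of the top exponential moment of the symmetric convolution operator with the extremizer of the generalized Gagliardo--Nirenberg inequality, from which the factor $\tilde\kappa(4,2)^4$ emerges.
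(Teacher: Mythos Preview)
Your proposal contains a genuine gap at step (i), and the paper's route is essentially different and much shorter.

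The paper does \emph{not} try to replace the non-intersection probabilities by their mean inside high moments. Instead it observes that the a priori bound of Claim~\ref{clm:Tlprbnd}, together with the moment bound for the limiting Brownian object from \cite[(2.3)]{BCR2}, gives uniform integrability, and that
\[
\frac{(\log n)^2}{n}\,TL'_n \stackrel{\D}{\Longrightarrow} \frac{\pi^4}{4}\int_0^1\!\!\int_0^1 G(B^1_t-B^2_s)\,\mathrm{d}t\,\mathrm{d}s
\]
(this is the content of \cite[Proposition 6.1]{As5}). Distributional convergence plus uniform integrability yields convergence of each \emph{fixed} moment $\mathbb{E}\bigl[((\log n)^2 TL'_n/n)^m\bigr]\to(\pi^4/4)^m\,\mathbb{E}\bigl[(\int\!\!\int G)^m\bigr]$, and then the whole series is handled by following \cite[Theorem 7.2.1]{Chenbook}; Remark~\ref{ggnfi} identifies the constant with $\tilde\kappa(4,2)^4$. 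No uniform-in-$m$ replacement of $\mathbb{P}(R'_x\cap\mathcal S^{k,i}=\emptyset)$ is ever needed: the probability terms are absorbed at the level of weak convergence, where only a second-moment comparison (as in \cite{As5}) is required.

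Your step (i) is not supplied by Section~\ref{sec:ProofofThmchipr}. That section shows the cross-error $\chi'$ is subleading by decomposing $G_{\mathcal S^2}$ and exploiting a form of monotonicity for the escape probabilities; it never establishes that $\mathbb{P}(R'_x\cap\mathcal S^{k,i}=\emptyset)$ can be replaced by $(1+o(1))\pi^2/(8\log n)$ inside an $m$-th moment uniformly for $m\le Mb_n$. You correctly flag this as the main obstacle, but the pointer to Section~\ref{sec:ProofofThmchipr} does not discharge it, and without it your head contribution cannot be identified with the Gagliardo--Nirenberg constant. The Cauchy--Schwarz symmetrization and the block-wise invariance principle in step (ii) are also delicate at the $m\sim b_n$ scale, but these become moot once you take the paper's route: pass to the Brownian limit first, and let the existing large-deviation machinery for $\int\!\!\int G(B^1-B^2)$ produce the constant.
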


\begin{proof}
Let $(B^1_s)_{s\ge 0}$ and $(B^2_s)_{s\ge 0}$ be independent Brownian motions for $d=4$. 
The need for the bound in \eqref{eq:apriormombnd} and \cite[(2.3)]{BCR2} are to ensure that  one can apply dominated convergence to the terms $ \mathbb{E}\left[ \left(\frac{(\log n)^2}{n} TL'_n \right)^m\right]$ when needed, and replace them with the term: 
\begin{equation*}
\left(\frac{\pi^4}{4}\right)^m\mathbb{E}\left[\left(\int_0^1 \int_0^1 G(B^1_t - B^2_s) \text{d}t \text{d}s\right)^{m}\right]. 
\end{equation*}
The reason why this can be done is due to the fact that
\begin{equation*}
\frac{(\log n)^2}{n} TL'_n 
\stackrel{\D}{\Longrightarrow}
\frac{\pi^4}{4} \int_0^1 \int_0^1 G(B^1_t - B^2_s) \text{d}t \text{d}s
\end{equation*}
following the proof of \cite[Proposition 6.1]{As5}.

We can follow the proof of \cite[Theorem 7.2.1]{Chenbook} to derive the appropriate upper bound. Finally, by Remark \ref{ggnfi}, we see we obtain our desired constant.


\end{proof}

It is manifest that Proposition \ref{prop:asympboundupr} is a consequence of Claim \ref{clm:Tlprbnd} and Lemma \ref{lem:Apriorbnd}. We devote the rest of this subsection to deriving Claim \ref{clm:Tlprbnd}.

\subsubsection{A proof of Claim \ref{clm:Tlprbnd}}


Our first remark is that the quantity $TL'_n$ is less than,
\begin{equation} \label{eq:defTLalpha}
\begin{aligned}
TL_{n,\alpha}:= \sum_{i,j=1}^n & \mathbb{P}( R'_{\mathcal{S}^1_i} \cap  \mathcal{S}^1[i - n^{\alpha}, i + n^{\alpha}] \cap \mathcal{S}^1 = \emptyset)  G_D(\mathcal{S}^1_i - \mathcal{S}^2_j)\\& \times \mathbb{P}(R'_{\mathcal{S}^2_j} \cap \mathcal{S}^2[j- n^{\alpha}, j + n^{\alpha}]\cap \mathcal{S}^2=\emptyset).
\end{aligned}
\end{equation}

We will analyze the moments of $TL_{n,\alpha}$ via a subadditivity argument along with a careful moment analysis. Our first subadditivity argument allows us to reduce our moment analysis of $TL_{n,\alpha}$ to a slightly weaker analysis.

\begin{lem} \label{lem:weaktostrong}
If one knows that there exists some constant $C$ such that for all $n$ and $m$ that 
\begin{equation}\label{eq:weakbnd}
\mathbb{E}[(TL_{n,\alpha})^m] \le C^m (m!)^2 \left(\frac{n}{(\log n)^2}\right)^m,
\end{equation} 
then  there is some other constant $C'$ such that,
\begin{equation}
\mathbb{E}[(TL_{n,\alpha})^m] \le (C')^m (m!)  \left(\frac{n}{(\log n)^2}\right)^m.
\end{equation}

\end{lem}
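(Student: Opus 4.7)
The aim is to upgrade the factorial factor $(m!)^2$ in the hypothesis to the sharp $m!$; equivalently, to pass from the stretched-exponential Laplace transform $\mathbb{E}[\exp(\theta\sqrt{Y})]<\infty$ (which the hypothesis gives for all $\theta$, with $Y := (\log n)^2 TL_{n,\alpha}/n$) to a bona fide exponential moment $\mathbb{E}[\exp(\theta Y)]<\infty$ for some $\theta>0$. The plan is a subadditivity argument that exploits the $n^{\alpha}$-windowed structure of the non-intersection probabilities in $TL_{n,\alpha}$: once $[0,n]$ is partitioned into $K$ blocks of length $n/K\gg n^{\alpha}$, contributions from different blocks are effectively independent, the hypothesis applies at the smaller scale $\mu(n/K)$ (where $\mu(n):=n/(\log n)^2$), and a concentration inequality for the sum of i.i.d.\ copies delivers the sharper moment at the correct linear scale.

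Concretely, the plan proceeds in three stages. First, use the convolution identity $G_D = \tilde{G}_D * \tilde{G}_D$ (as in the opening of Section~\ref{sec:TL}) to separate $\mathcal{S}^1$ and $\mathcal{S}^2$ and write $TL_{n,\alpha} = \sum_a V^1(a)V^2(a)$; decomposing each $V^j = \sum_k V^j_k$ by blocks yields $TL_{n,\alpha} = \sum_{k,l} T_{k,l}$, where $T_{k,l}$ uses block $k$ of $\mathcal{S}^1$ and block $l$ of $\mathcal{S}^2$. Second, by translation invariance and the independence of walk increments on disjoint blocks---with the boundary corrections arising from the $n^\alpha$ window straddling block endpoints handled by error estimates of the same kind developed in Sections~\ref{sec:section2} and~\ref{sec:ProofofThmchipr}---identify each $T_{k,l}$ with an independent copy of a rescaled cross-term analogous to $TL_{n/K,\alpha'}$ for some $\alpha'>\alpha$, and apply the weak hypothesis at scale $n/K$ to get $\mathbb{E}[T_{k,l}^m]\le C^m(m!)^2\mu(n/K)^m$. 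Third, combine the per-block mean and variance bounds (the $m=1,2$ cases of the hypothesis) with a Bernstein-type concentration inequality for sums of stretched-exponentially tailed i.i.d.\ variables to produce a bounded exponential moment
\begin{equation*}
\mathbb{E}\Big[\exp\Big(\theta\,\frac{(\log n)^2}{n}\sum_{k,l}T_{k,l}\Big)\Big]<\infty
\end{equation*}
for some $\theta>0$, which translates into the desired moment bound $\mathbb{E}[(TL_{n,\alpha})^m]\le (C')^m m!\,\mu(n)^m$.

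The main obstacle is the concentration in the third stage: the weak hypothesis provides only a stretched-exponential per-block moment, and a naive Bernstein-style argument yields sub-Gaussian tails in the bulk with stretched-exponential tails past a crossover scale. The key point is that this crossover occurs at $t\sim K\cdot\mu(n)/K = \mu(n)$, which is precisely the order of the mean of the sum, so a full exponential moment persists at the right linear scale; making this rigorous requires a careful interplay between the $m=2$ variance input and the high-moment $(m!)^2$ tail input, in the spirit of \cite[Lemma 4.4]{BCR} and the exponential-moment argument used in the proof of \cite[Theorem 5.4]{BCR}. A secondary obstacle is controlling the off-diagonal block pairs in the first stage: naively summing the uniform per-pair bound produces an extra factor of $K$, and a separate decoupling or iteration argument is needed to show that the off-diagonal pairs $T_{k,l}$ in fact decay with the temporal distance $|k-l|$ through the $G_D$ factor and therefore sum to the correct order.
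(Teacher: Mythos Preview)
Your core strategy---separating the two walks via $G_D=\tilde G_D*\tilde G_D$, identifying a subadditive block structure, and then bootstrapping the weak $(m!)^2$ bound to the sharp $m!$ bound via concentration---is exactly the paper's approach. The paper, however, packages this much more efficiently: once the functionals
\[
\mathcal{F}^a_{\mathcal{S}(n',n]}=\sum_{i=n'}^{n}\tilde G_D(\mathcal{S}_i-a)\,\mathbb{P}\bigl(R'_{\mathcal{S}_i}\cap\mathcal{S}[i-(n-n')^\alpha,i+(n-n')^\alpha]\cap\mathcal{S}(n',n]=\emptyset\bigr)
\]
are shown to satisfy (i) $TL_{n,\alpha}=\sum_a\mathcal{F}^a_{\mathcal{S}^1[1,n]}\mathcal{F}^a_{\mathcal{S}^2[1,n]}$, (ii) subadditivity $\mathcal{F}^a_{\mathcal{S}[1,s]}\le\mathcal{F}^a_{\mathcal{S}[1,t]}+\mathcal{F}^a_{\mathcal{S}(t,s]}$, and (iii) translation invariance $\mathcal{F}^{a+z}_{\mathcal{S}+z}=\mathcal{F}^a_{\mathcal{S}}$, the conclusion follows directly from the abstract machinery of \cite[Section~6.1]{Chenbook}, specifically \cite[Theorem~6.2.1]{Chenbook}. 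What you describe in your three stages is essentially a reconstruction of the proof of that theorem; the ``obstacles'' you flag (the Bernstein-type crossover, the summation over block pairs) are precisely the technical content already handled there.

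One point of caution: your secondary obstacle is misdiagnosed. The off-diagonal pairs $T_{k,l}$ do \emph{not} need to decay in $|k-l|$ through the Green's function---there is no such decay in general, and the argument does not rely on one. Chen's framework instead exploits the bilinear form $\sum_a\mathcal{F}^a_{\mathcal{S}^1}\mathcal{F}^a_{\mathcal{S}^2}$ together with subadditivity and positivity of each $\mathcal{F}^a$; the $K^2$ cross-terms are controlled by the structure of the inner product (e.g.\ via Cauchy--Schwarz reducing to one-walk quantities), not by spatial or temporal decay. If you pursue your hands-on route, you should replace the decay heuristic with this mechanism, or simply verify the axioms and invoke \cite[Theorem~6.2.1]{Chenbook} as the paper does.
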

\begin{proof}
We start by using a subadditivity argument. 
Recall that $G_D = \tilde{G}_D * \tilde{G}_D$. 
To match the notation of \cite[Chapter 6.1]{Chenbook}, we also define
\begin{equation*}
\mathcal{F}^{a}_{\mathcal{S}(n',n]} = \sum_{i=n'}^n \tilde{G}_D(\mathcal{S}_i-a) \mathbb{P}(R'_{\mathcal{S}_i} \cap \mathcal{S}[i - (n-n')^{\alpha}, i + (n-n')^{\alpha}] \cap \mathcal{S}(n',n] = \emptyset).
\end{equation*}
The main thing to observe about this function is that,
\begin{equation*}
TL_{n,\alpha}= \sum_{a \in \mathbb{Z}^4} \mathcal{F}^{a}_{\mathcal{S}^1[1,n]} \mathcal{F}^a_{\mathcal{S}^2[1,n]}. 
\end{equation*}
Furthermore, it is trivially true that for times $t<s$ that
\begin{equation*}
\mathcal{F}^{a}_{\mathcal{S}[1,s]} \le \mathcal{F}^{a}_{\mathcal{S}[1,t]} + \mathcal{F}^{a}_{\mathcal{S}(t,s]}
\end{equation*}
and $\mathcal{F}^{a}_{\mathcal{S}}$ has the translation symmetry,
\begin{equation*}
\mathcal{F}^{a+z}_{\mathcal{S} +z} = \mathcal{F}^{a}_{\mathcal{S}}.
\end{equation*}

For these reasons, we can apply all the results of \cite[Section 6.1]{Chenbook}.
In particular, we can apply the argument of \cite[Theorem 6.2.1]{Chenbook}.

\end{proof}

It remains to prove equation \eqref{eq:weakbnd}. 

\begin{lem}\label{lem:pfwkbnd}
 Equation \eqref{eq:weakbnd} holds. Namely, there is a constant such that for all $n$ and $m$ we have that,
\begin{equation*}
\mathbb{E}[(TL_{n,\alpha})^m] \le C^m (m!)^2 \left(\frac{n}{(\log n)^2}\right)^m.
\end{equation*} 
\end{lem}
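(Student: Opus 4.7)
The plan is to establish the $(m!)^2$ moment bound on $TL_{n,\alpha}$ by a direct expansion of the $m$-th power combined with a careful layer-by-layer Markov analysis on the two independent walks $\mathcal{S}^1, \mathcal{S}^2$. The key analytical inputs are the classical $\lesssim 1/\log n$ bound on the non-intersection probability between a random walk and a fixed set of polynomial size (\cite[Theorem~3.5.1]{LA91}), and the Green's function estimate \eqref{gre2} that governs the space-time decay of $G_D$.

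Concretely, I would start from the expansion
\begin{equation*}
\mathbb{E}\!\left[(TL_{n,\alpha})^m\right] \;=\; \sum_{\vec i,\,\vec j} \mathbb{E}\!\left[\prod_{k=1}^m \tilde P^{(1)}_{i_k}\, \tilde Q^{(2)}_{j_k}\, G_D(\mathcal{S}^1_{i_k} - \mathcal{S}^2_{j_k})\right],
\end{equation*}
where $\tilde P^{(1)}_i := \mathbb{P}(R'_{\mathcal{S}^1_i} \cap \mathcal{S}^1[i - n^\alpha, i + n^\alpha] = \emptyset \mid \mathcal{S}^1)$ and $\tilde Q^{(2)}_j$ is defined analogously for $\mathcal{S}^2$. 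Conditioning on $\mathcal{S}^1$ and using the representation of each $\tilde Q^{(2)}_{j_k}$ as a non-intersection probability for an independent walk $R'^{(k)}$ started at $\mathcal{S}^2_{j_k}$, the inner expectation over $\mathcal{S}^2$ can be treated via Markov's property on sorted indices $j_1 \le \cdots \le j_m$. Each Markov layer contributes (i) a factor $\lesssim 1/\log n$ from the window non-intersection bound on an interval of length $2n^\alpha$, and (ii) a factor obtained by integrating the transition kernel against $G_D$, controlled via \eqref{gre2}. Performing the analogous Markov decomposition on the outer expectation over $\mathcal{S}^1$ (for sorted $i_1 \le \cdots \le i_m$) and summing over all orderings and interleavings of the two index groups --- which together contribute a combinatorial prefactor of $(m!)^2$ --- produces the target bound $C^m (m!)^2 (n/(\log n)^2)^m$.

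The main obstacle is that the bound $\tilde P^{(1)}_i \lesssim 1/\log n$ holds only in expectation, not pointwise; when several indices $i_k$ (or $j_k$) fall inside a single window of length $n^\alpha$, the factors $\tilde P^{(1)}_{i_k}$ become strongly correlated and the layer-by-layer estimate cannot extract $m$ independent logarithmic gains. I would address this by partitioning the ordered configurations into ``well-separated'' tuples (all consecutive gaps exceed $n^\alpha$), treated by the Markov decomposition above, and ``clustered'' tuples, on which only the trivial bound $\tilde P, \tilde Q \le 1$ is used, together with the Green's function moment estimate of Lemma~\ref{iterated:bound} and the cluster-counting argument from the proof of Lemma~\ref{lem:MTn}. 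The polynomial sparsity of clustered tuples --- whose count is suppressed by a factor $n^{\alpha-1}$ per cluster --- provides a polynomial-in-$n$ gain that more than compensates the missing logarithmic factors, so the clustered contribution is subleading and the combined estimate matches the desired bound.
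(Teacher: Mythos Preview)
Your overall architecture---split into well-separated and clustered index configurations, extract the $\log n$ gains on the former---matches the paper, but two of your steps do not go through as written.

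\emph{Clustered tuples.} This is the more serious gap. If you bound all $\tilde P,\tilde Q$ by $1$ on a clustered configuration, the remaining expectation is at best $\mathbb{E}\big[\prod_k G_D(\mathcal S^1_{i_k}-\mathcal S^2_{j_k})\big]$, and summing over configurations with a single cluster only saves a factor $n^{\alpha-1}$ relative to the full $m$-th moment $C^m m!\,n^m$. That saving beats the required $(\log n)^{2m}$ only for $m$ of order $\log n/\log\log n$, far short of ``all $m$''. Lemma~\ref{iterated:bound} does not help here: it bounds moments of the triple-sum $MT'_n$ with two Green's function factors per term, not the restricted moments of $\sum_{i,j}G_D(\mathcal S^1_i-\mathcal S^2_j)$ you need. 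The paper handles the clustered part completely differently: it first disposes of all $m\ge(\log n)^2$ by the trivial bound $C^m m!\,n^m$ combined with $m!\ge (\log n)^{2m}e^{-m}$, and for $m\le(\log n)^2$ runs an induction on $m$, peeling off the close pair and using the bridge Green's function estimate of Lemma~\ref{lem:BridgeBound} to sum the dangling $j_1$-index.

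\emph{Well-separated tuples.} Your ``layer-by-layer Markov'' sketch hides the only nontrivial step. The bound $\tilde P^{(1)}_{i_k}\lesssim 1/\log n$ is an unconditional expectation over the walk inside the window $[i_k-n^\alpha,i_k+n^\alpha]$; once you condition on the Markov skeleton, the window is a bridge and you cannot invoke that bound directly. The paper resolves this by conditioning on the triple $(x^l_k,x^c_k,x^r_k)$ at the window boundary/center, then using the local CLT (valid because the gaps are $\ge n^{3\alpha}\gg n^\alpha$, not merely $>n^\alpha$ as you wrote) to replace $p_{i_{k+1}-i_k-2n^\alpha}(x^l_{k+1}-x^r_k)$ by $p_{i_{k+1}-i_k}(x^c_{k+1}-x^c_k)$; only then can $x^r_k,x^l_k$ be summed freely to recover the unconditioned $1/\log n$. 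Without this replacement, the between-window transition still depends on $x^r_k,x^l_k$ and the average cannot be taken.
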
 
Before we start proving the above lemma, we will finish the proof of Claim \ref{clm:Tlprbnd}.
\begin{proof}[Proof of Claim \ref{clm:Tlprbnd}]
Since $TL_n'\le TL_{n,\alpha} $, we have
by Lemmas \ref{lem:pfwkbnd} and \ref{lem:weaktostrong} that
\begin{equation*}
\mathbb{E}[(TL_n')^m] \le \mathbb{E}[(TL_{n,\alpha})^m]  \le C^m m! \left(\frac{n}{(\log n)^2}\right)^m.
\end{equation*}
 This is exactly what was desired. 
\end{proof}

We now return to the proof of Lemma \ref{lem:pfwkbnd}.

\subsubsection{The proof of Lemma \ref{lem:pfwkbnd}}

To show Lemma \ref{lem:pfwkbnd}, we first need the following claim: 

\begin{clm} \label{claim:TL}
Define 
\begin{equation*}
\begin{aligned}
\overline{TL}^m_{n,\alpha}:=& \sum_{\substack{i_1,\ldots,i_m\\ |i_a - i_b| \ge n^{3\alpha} \forall a,b}} \sum_{\substack{j_1,\ldots,j_m\\ |j_a - j_b| \ge n^{3\alpha} \forall a,b}}  \prod_{k=1}^m\mathbb{P}( R'_{\mathcal{S}^1_{i_k}} \cap  \mathcal{S}^1[i_k - n^{\alpha}, i_k + n^{\alpha}]\cap \mathcal{S}^1 = \emptyset)\\& \times  G_D(\mathcal{S}^1_{i_k} - \mathcal{S}^2_{j_k}) \mathbb{P}(R'_{\mathcal{S}^2_{j_k}} \cap \mathcal{S}^2[j_k- n^{\alpha}, j_k + n^{\alpha}]\cap \mathcal{S}^2=\emptyset).
\end{aligned}
\end{equation*}
Then, there exists some constant $B$ such that,
\begin{equation*}
\mathbb{E}[\overline{TL}_{n,\alpha}^m] \le (m!)^2 B^m \frac{n^m}{(\log n)^{2m}}.
\end{equation*}
\end{clm}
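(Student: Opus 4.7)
The plan is to exploit the pairing symmetry in the product to extract the combinatorial $(m!)^2$ factor, to use the time-separation $n^{3\alpha}$ together with the Markov property to decouple the $2m$ non-intersection probabilities (each producing $C/\log n$), and to bound the remaining Green-function sum by $B^m n^m$ via standard four-dimensional heat-kernel estimates.

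First, writing the distinct-tuple sum over the $i$'s as $m!$ times a sum over ordered tuples, and the distinct-tuple sum over the $j$'s as a sum over pairing permutations $\sigma \in S_m$ of ordered tuples, reduces the problem to showing the uniform-in-$\sigma$ bound
\begin{equation*}
\mathbb{E}\Big[\sum_{\substack{i_1<\cdots<i_m\\|i_a-i_b|\ge n^{3\alpha}}}\sum_{\substack{j_1<\cdots<j_m\\|j_a-j_b|\ge n^{3\alpha}}}\prod_{k=1}^m P^1_{i_k}\,G_D\big(\mathcal{S}^1_{i_k}-\mathcal{S}^2_{j_{\sigma(k)}}\big)\,P^2_{j_k}\Big]\lesssim B^m\,\frac{n^m}{(\log n)^{2m}},
\end{equation*}
where $P^1_{i_k}:=\mathbb{P}(R'_{\mathcal{S}^1_{i_k}}\cap \mathcal{S}^1[i_k-n^\alpha,i_k+n^\alpha]=\emptyset)$ and $P^2_{j_k}$ is defined analogously. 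Summing the $m!$ pairings $\sigma$ and multiplying by the outer $m!$ from the $i$-sort then yields $(m!)^2$.

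For the inner bound, I would use the fact that $|i_a-i_b|\ge n^{3\alpha}\gg n^\alpha$, so the windows $W_k:=[i_k-n^\alpha,i_k+n^\alpha]$ are pairwise disjoint. Applying the strong Markov property of $\mathcal{S}^1$ at the window endpoints $i_k\pm n^\alpha$ shows that, conditional on the values of $\mathcal{S}^1$ at these endpoints (and on all of $\mathcal{S}^2$), the interior fragments of different windows are mutually independent. Since $P^1_{i_k}$ depends, via translation invariance, only on the centered bridge $\mathcal{S}^1[i_k-n^\alpha,i_k+n^\alpha]-\mathcal{S}^1_{i_k}$, the $m$ factors $P^1_{i_k}$ are conditionally independent and each is controlled by the classical four-dimensional non-intersection estimate $\mathbb{P}^x(R'_0\cap\mathcal{S}[0,n^\alpha]=\emptyset)\le C/\log n$, uniform in the starting point $x$ (as in \cite{LA91}). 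The same argument applied to $\mathcal{S}^2$ contributes another $(\log n)^{-m}$. What remains is the Green-function moment bound
\begin{equation*}
\sum_{i_1<\cdots<i_m}\sum_{j_1<\cdots<j_m}\mathbb{E}\Big[\prod_{k=1}^m G_D\big(\mathcal{S}^1_{i_k}-\mathcal{S}^2_{j_{\sigma(k)}}\big)\Big]\lesssim B^m n^m,
\end{equation*}
which follows from iterating the Markov property on both $\mathcal{S}^1$ and $\mathcal{S}^2$ at the ordered times, together with the estimate $\mathbb{E}[G_D(x-\mathcal{S}_t)]\lesssim(\|x\|^2\vee t)^{-1}$ that is a consequence of \eqref{gre2}. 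The iteration telescopes into a product of rational expressions in the consecutive time gaps, whose double sum over ordered indices is of order $n^m$ with a $\sigma$-independent constant.

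The main obstacle is the careful decoupling between $P^1_{i_k}$ and the Green-function factor $G_D(\mathcal{S}^1_{i_k}-\cdot)$ that shares the midpoint $\mathcal{S}^1_{i_k}$ of the $k$-th window: conditioning on the window endpoints alone does not separate the midpoint from the bridge, so the decisive point is that translation invariance lets one rewrite $P^1_{i_k}$ as a functional of the centered bridge only, which is measurable with respect to the interior increments and, given the endpoints, independent of the absolute midpoint position that enters the Green function. A secondary technical issue is ensuring that the $C/\log n$ non-intersection bound holds uniformly in the starting configurations produced by the Markov conditioning; this is standard via the Green-function arguments of Lawler.
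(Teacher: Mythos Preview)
Your combinatorial reduction to $(m!)^2$ times a uniform-in-$\sigma$ bound is correct and matches the paper. The genuine gap is in the decoupling step. Your ``decisive point'' --- that, given the window endpoints $\mathcal{S}^1_{i_k\pm n^\alpha}$, the centered bridge $\mathcal{S}^1[i_k-n^\alpha,i_k+n^\alpha]-\mathcal{S}^1_{i_k}$ is independent of the midpoint $\mathcal{S}^1_{i_k}$ --- is false: the centered bridge evaluated at the left time is $\mathcal{S}^1_{i_k-n^\alpha}-\mathcal{S}^1_{i_k}$, which together with the (conditioned) left endpoint determines $\mathcal{S}^1_{i_k}$ exactly. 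Hence $P^1_{i_k}$ and $G_D(\mathcal{S}^1_{i_k}-\cdot)$ are \emph{not} conditionally independent given only the window endpoints, and your argument does not separate them. The natural fallback, bounding the conditional expectation of $P^1_{i_k}$ given the two endpoints uniformly by $C/\log n$, is also not available: for atypical endpoints (e.g.\ $\|x^l_k-x^r_k\|\gg n^{\alpha/2}$) the bridge is stretched and the non-intersection probability need not be $O(1/\log n)$. And even if it were, what remains after taking that conditional expectation is a Green-function moment at \emph{bridge} midpoints, not at points of the free walk, so your stated ``remaining'' bound does not apply directly.

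The paper's fix is to condition additionally on the midpoints $\mathcal{S}^1_{i_k}=x^c_k$ (and $\mathcal{S}^2_{j_k}=y^c_k$), so that both the non-intersection factor $NI(\mathcal{S}^1,i_k,x^c_k,x^r_k,x^l_k)$ and $G_D(x^c_k-y^c_k)$ become deterministic. The inter-window transition kernels then read $p_{i_{k+1}-i_k-2n^\alpha}(x^l_{k+1}-x^r_k)$, and this is where the gap assumption $i_{k+1}-i_k\ge n^{3\alpha}$ is actually used: since $\|x^{l,r}_k-x^c_k\|\le n^\alpha$, the local CLT gives $p_{i_{k+1}-i_k-2n^\alpha}(x^l_{k+1}-x^r_k)\le (1+o(1))\,p_{i_{k+1}-i_k}(x^c_{k+1}-x^c_k)$ on the typical event, plus an exponentially suppressed tail. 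After this replacement the variables $x^l_k,x^r_k$ no longer appear in any transition kernel, so one can sum them \emph{freely}, collapsing each conditional $NI$ to the unconditional non-intersection probability $\lesssim 1/\log n$. What is left is exactly the $m$-th moment of $\sum_{i,j}G_D(\mathcal{S}^1_i-\mathcal{S}^2_j)$ for the free walks, bounded by $C^m m!\,n^m$. This LCLT replacement is the missing idea in your sketch.
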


We will show it after the proof of Lemma \ref{lem:pfwkbnd}. 

\begin{proof} [Proof of Lemma \ref{lem:pfwkbnd}]
In what follows, the constant $C$ may not remain the same from line to line. 
Since $TL_{n,\alpha} \le \sum_{i,j=1}^n G_D(\mathcal{S}^1_i -\mathcal{S}^2_j)$, it is clear that there is some constant $C$ such that 
\begin{equation*}
\mathbb{E}[(TL_{n,\alpha})^m] \le \mathbb{E}\left[ \left( \sum_{i,j=1}^n G_D(\mathcal{S}^1_i -\mathcal{S}^2_j) \right)^m\right] \le C^m m! n^m.
\end{equation*}
This latter estimate immediately follows from the large deviation statistics of 
$\sum_{i,j=1}^n G_D(\mathcal{S}^1_i -\mathcal{S}^2_j)$ from \cite[Lemma 4.1]{DO}. 
Now, observe that when $m \ge (\log n)^2$. One has that 
\begin{equation*}
m! \ge  m^m e^{-m} \ge (\log n)^{2m} e^{-m}.
\end{equation*}
Thus, for $m \ge (\log n)^2$, we have that,
\begin{equation*}
C^m m! n^m \le  (eC)^m (m!)^2  \left(\frac{n}{(\log n)^2} \right)^m.
\end{equation*}
It suffices to prove an upper bound for moments when $m \le (\log n)^2$. 

\textit{Bounding the moments when $m \le (\log n)^2$}


We will show that there exists a constant $C$ such that,
\begin{equation*}
\mathbb{E}[(TL_{n,\alpha})^m] \le (m!)^2 C^m \frac{n^m}{(\log n)^{2m}}
\end{equation*}
by induction on $m$.

Since the points $i_k$ are all spaced far apart, we are able to  use in some form that the probability terms $\mathbb{P}( R'_{\mathcal{S}^1_{i_k}} \cap  \mathcal{S}^1[i_k - n^{\alpha}, i_k + n^{\alpha}]\cap \mathcal{S}^1 = \emptyset)$ should be rather independent of each other. We will return to the proof of the claim later. Assuming the claim we have the following result; the moments of $TL_{n,\alpha}$ can be bounded from above by 
\begin{equation} \label{eq:TLalpha}
\begin{aligned}
\mathbb{E}[(TL_{n,\alpha})^m] & \le \mathbb{E}[\overline{TL}^m_{n,\alpha}] \\
& + 2m^2 \mathbb{E}\bigg[ \sum_{\substack{i_1,\ldots,i_m \\ |i_1 - i_2| \le n^{3\alpha}}} \sum_{j_1,\ldots,j_m} \prod_{k=1}^m\mathbb{P}( R'_{\mathcal{S}^1_{
i_k}} \cap  \mathcal{S}^1[i_k - n^{\alpha}, i_k + n^{\alpha}] \cap \mathcal{S}^1= \emptyset)\\& \times  G_D(\mathcal{S}^1_{i_k} - \mathcal{S}^2_{j_k}) \mathbb{P}(R'_{\mathcal{S}^2_{j_k}} \cap \mathcal{S}^2[j_k- n^{\alpha}, j_k + n^{\alpha}]\cap \mathcal{S}^2=\emptyset)\bigg].
\end{aligned}
\end{equation}
Namely, if there is a term in the $m$th moment of $TL_{n,\alpha}$ that is not already contained in the term $\overline{TL}^{m}_{n,\alpha}$, there must be some pair of points $(i_a,i_b)$ or $(j_a,j_b)$ that are of a distance closer than $n^{3\alpha}$. By symmetry, we may assume that the two points are $i_1$ and $i_2$. There are at most $2m^2$ such choices of pairs $(i_a,i_b)$ or $(j_a,j_b)$. We will now bound the moment of the second term above.

If could only sum over the terms $i_2,\ldots,i_m$ and $j_2,\ldots,j_m$, then this would be the $m-1$th moment of $(TL_{n,\alpha})$. We could then apply induction to this quantity. The main idea is that if we fix $i_2$ there are at most $2n^{3\alpha}$ choices of $i_1$. Thus, intuitively, this term should be no more than $n^{3\alpha}$ times $\mathbb{E}[(TL_{n,\alpha})^{m-1}]$. The problem is to deal with the sum over $j_1$.

Observe the following,
\begin{equation} \label{eq:divmore}
\begin{aligned}
 &\mathbb{E}\bigg[ \sum_{\substack{i_1,\ldots,i_m \\ |i_1 - i_2| \le n^{3\alpha}}} \sum_{j_1,\ldots,j_m} \prod_{k=1}^m\mathbb{P}( R'_{\mathcal{S}^1_{i_k}} \cap  \mathcal{S}^1[i_k - n^{\alpha}, i_k + n^{\alpha}]\cap \mathcal{S}^1 = \emptyset)\\& \times  G_D(\mathcal{S}^1_{i_k} - \mathcal{S}^2_{j_k}) \mathbb{P}(R'_{\mathcal{S}^2_
{j_k}} \cap \mathcal{S}^2[j_k- n^{\alpha}, j_k + n^{\alpha}]\cap \mathcal{S}^2=\emptyset)\bigg]\\
& \le \sum_{k=2}^m    \mathbb{E}\bigg[ \sum_{\substack{i_1,\ldots,i_m \\ |i_1 - i_2| \le n^{3\alpha}}} \sum_{\substack{j_1,\ldots,j_m \\ |j_1 - j_k| \le n^{3\alpha}}}\prod_{k=1}^m\mathbb{P}( R'_{\mathcal{S}^1_{i_k}} \cap  \mathcal{S}^1[i_k - n^{\alpha}, i_k + n^{\alpha}] \cap \mathcal{S}^1= \emptyset)\\& \times  G_D(\mathcal{S}^1_{i_k} - \mathcal{S}^2_{j_k}) \mathbb{P}(R'_{\mathcal{S}^2_{j_k}} \cap \mathcal{S}^2[j_k- n^{\alpha}, j_k + n^{\alpha}]\cap \mathcal{S}^2=\emptyset)\bigg] \\
& + \mathbb{E}\bigg[ \sum_{\substack{i_1,\ldots,i_m \\ |i_1 - i_2| \le n^{3\alpha}}} \sum_{\substack{j_1,\ldots,j_m \\ |j_1 - j_k| \ge n^{3\alpha} \forall k}}\prod_{k=1}^m\mathbb{P}( R'_{\mathcal{S}^1_{i_k}} \cap  \mathcal{S}^1[i_k - n^{\alpha}, i_k + n^{\alpha}] \cap \mathcal{S}^1 = \emptyset)\\& \times  G_D(\mathcal{S}^1_{i_k} - \mathcal{S}^2_{j_k}) \mathbb{P}(R'_{\mathcal{S}^2_{j_k}} \cap \mathcal{S}^2[j_k- n^{\alpha}, j_k + n^{\alpha}]\cap \mathcal{S}^2=\emptyset)\bigg].
\end{aligned}
\end{equation}
If we bound the product $$\mathbb{P}( R'_{\mathcal{S}^1_{i_1}} \cap  \mathcal{S}^1[i_1 - n^{\alpha}, i_1 + n^{\alpha}]\cap \mathcal{S}^1 = \emptyset) G_D(\mathcal{S}^1_{i_1} - \mathcal{S}^2_{j_1})  \mathbb{P}(R'_{\mathcal{S}^2_{j_1}} \cap \mathcal{S}^2[j_1- n^{\alpha}, j_1 + n^{\alpha}]\cap \mathcal{S}^2=\emptyset)$$ by $1$, we see that the first term on the right hand side above in equation \eqref{eq:divmore} can indeed be bounded by $ \lesssim m n^{6\alpha} \mathbb{E}[(TL_{n,\alpha})^{m-1}]$.

To deal with the second term, we do the following. First, fix the terms $i_2,\ldots,i_m$ and $j_2,\ldots,j_m$. Without loss of generality, we can assume that  we order $j_2 \le j_3 \le \ldots \le j_{m-1} \le j_m$, and that $  j_2 \le j_1 \le j_3$. (We can apply similar logic regardless of the relative position of  ${ j_1}$ in the ordering $j_2 \le \ldots \le j_{m}$.)
Notice that upon conditioning on the values of $\mathcal{S}^2_{ j_2 + n^{\alpha}}$ and $\mathcal{S}^2_{ j_3 - n^{\alpha}}$, the walk $\mathcal{S}^2[{j_2} + n^{\alpha}, { j_3} - n^{\alpha}]$ becomes independent of the rest of the walk. We exploit this fact by  using that
\begin{equation} \label{eq:Greenbnd}
\mathbb{E}_{\mathcal{S}^2}\left[ \sum_{ { j_2} + n^{\alpha} \le j_1 \le j_3 - n^{\alpha}} G_D(\mathcal{S}^1_{i_1}  - \mathcal{S}^2_{j_1}) \bigg| \mathcal{S}^2_{j_2 + n^{\alpha}} = x, \mathcal{S}^2_{j_3 - n^{\alpha}} = y \right] \lesssim \log n
\end{equation}
for any pairs of values $x$ and $y$ by Lemma \ref{lem:BridgeBound}.  The expectation above is only taken over the random walk $\mathcal{S}^2$.  (Note, here we are bounding the probability term $ \mathbb{P}(R'_{\mathcal{S}^2_{j_1}} \cap \mathcal{S}^2[j_1- n^{\alpha}, j_1 + n^{\alpha}]\cap \mathcal{S}^2=\emptyset)$  by $1$ to simplify further computations.)

As a consequence, we see that we have,
\begin{equation*}
\begin{aligned}
& \mathbb{E}\bigg[ \sum_{\substack{i_1,\ldots,i_m \\ |i_1 - i_2| \le n^{3\alpha}}} \sum_{\substack{j_1,\ldots,j_m \\ |j_1 - j_k| \ge n^{3\alpha} \forall k}}\prod_{k=1}^m\mathbb{P}( R'_{\mathcal{S}^1_{i_k}} \cap  \mathcal{S}^1[i_k - n^{\alpha}, i_k + n^{\alpha}]\cap \mathcal{S}^1 = \emptyset)\\& \times  G_D(\mathcal{S}^1_{i_k} - \mathcal{S}^2_{j_k}) \mathbb{P}(R'_{\mathcal{S}^2_{j_k}} \cap \mathcal{S}^2[j_k- n^{\alpha}, j_k + n^{\alpha}]\cap \mathcal{S}^2=\emptyset)\bigg]\\
& \lesssim m n^{3\alpha} (\log n)  \mathbb{E}[TL_{n,\alpha}^{m-1}] 
\lesssim m C^{m-1} n^{3 \alpha} (\log n) (m-1)!^2 \frac{n^{m-1}}{(\log n)^{2m-2}}.
\end{aligned}
\end{equation*}
The factor of $n^{3\alpha}$ comes from the possible choices of $x_1$ (given its distance from $x_2$) and the factor of $m$ comes from the fact that $j_1$ can be located in between any of the $m$ regions $[j_i, j_{i+1}]$ in the ordering $j_2 \le j_3 \ldots \le j_m$. At the final step, we applied the induction hypothesis. 

Returning to equation \eqref{eq:divmore}, we see that,
\begin{equation*}
\begin{aligned}
&\text{L.H.S. of } \eqref{eq:divmore} \\
&\lesssim  m n^{6\alpha} C^{m-1} (m-1)!^2 \frac{n^{m-1}}{(\log n)^{2m-2}} + m (\log n) (m-1)!^2 C^{m-1}\frac{n^{m-1}}{(\log n)^{2m-2}}.
\end{aligned}
\end{equation*}

Substituting this back into equation \eqref{eq:TLalpha}, we have,
\begin{equation*}
\mathbb{E}[(TL_{n,\alpha})^m] \le (m!)^2  B^m \frac{n^m}{( \log n)^m } + K m^3 n^{6 \alpha} C^{m-1} (m-1)!  \frac{n^{m-1}}{(\log n)^{2m-2}}.
\end{equation*}
Notice that the right hand side is less than $C^m  (m!)^2 \frac{n^m}{(\log n)^{2m}}$ provided,
\begin{equation*}
 1 \ge  \left(\frac{B}{C} \right)^m + K C^{-1} n^{6 \alpha -1} (\log n)^3  \ge \left(\frac{B}{C} \right)^m + K C^{-1} m n^{6 \alpha -1} (\log n)^2.
\end{equation*}
Provided $C$ is chosen large relative to $B$ and the universal constant $K$, there  is a value of $C$ such that the above inequality will be satisfied for all $n$ and $m \le (\log n)^2$. This completes the induction provided that Claim \ref{claim:TL} holds.

\end{proof}
 Now we start complete the proof of Claim \ref{claim:TL}.
\begin{proof}[Proof of Claim \ref{claim:TL}]
Without loss of generality, we may order the times as $i_1\le i_2 \le i_3 \ldots \le i_m$.  
Our first step is to condition on the values of the random walk at specific points as  $\mathcal{S}^{1}_{i_k} = x^c_k$, $\mathcal{S}^1_{i_k +n^{\alpha}}=  x^{r}_k$, $\mathcal{S}^1_{i_k - n^{\alpha}} = x^l_k$ and $\mathcal{S}^{2}_{j_k}= y^c_k$, $\mathcal{S}^2_{j_k + n^{\alpha}} = y^r_k$ , $\mathcal{S}^{2}_{j_k - n^{\alpha}} = y^l_k$.  With the endpoints of the neighborhoods $\mathcal{S}^1[i_k - n^{\alpha}, i_k + n^{\alpha}]$ specified, the neighborhoods involved in the probability terms above become independent of each other.  This is the key observation used to simplify the computations that proceed. In what follows, we let $p_t(x)$ denote the probability that a SRW transitions to the point $x$ at time $t$.

To simplify what proceeds, we introduce the following notation,
\begin{equation}
NI(\mathcal{S}, i, x^c, x^r, x^l):= \mathbb{E}[\mathbb{P}(R'_{\mathcal{S}_i} \cap \mathcal{S}[i-n^{\alpha},i+n^{\alpha}]\cap \mathcal{S}=\emptyset)|\mathcal{S}_i=x^c,\mathcal{S}_{i+n^{\alpha}} = x^r, \mathcal{S}_{i-n^{\alpha}} =x^l].
\end{equation}

This finds the expected value of the probability that an independent random walk $R'_{\mathcal{S}_i}$ starting at $\mathcal{S}_i$ does not intersect the portion of the random walk $\mathcal{S}[i- n^{\alpha},i+n^{\alpha}]$ conditioned on the random walk  being at points $x^c$ at time $i$, $x^r$ at time $i+n^{\alpha}$ and $x^l$ at time $i- n^{\alpha}$. If it is not necessary to condition $x^r$ and $x^l$, we will slightly abuse notation and denote this by dropping the appropriate argument on the left hand side. 
Let $\Pi_m$ be the collection of all permutations on $m$ points. Note that $c$ as a superscript is used as a shorthand for `center' while $r$ and $l$ are `right' and `left' respectively. 
We can write the expectation of $\mathbb{E}[\overline{TL}_{n,\alpha}^m]$ as,

\begin{equation} \label{eq:verylong}
\begin{aligned}
& (m!) \sum_{\sigma \in \Pi_m} \sum_{\substack{1 \le i_1\le i_2 - n^{3\alpha} \le  \ldots \\ \le i_m - (m-1)  n^{3^{\alpha}} \le n  -(m-1)n^{3\alpha}}} \sum_{\substack{1 \le  j_{\sigma(1)} \le j_{\sigma(2)} - n^{\alpha}\ldots \\ \le j_{\sigma(m)} - (m-1) n^{3\alpha} \le n - (m-1)n^{3\alpha}}}\sum_{\substack{x^{r}_k, x^{l}_k, x^{c}_k ,\\ y^r_k,y^l_k, y^c_k \in \mathbb{Z}^4, \forall k }} \\& p_{i_1 }(x^c_1) p_{n^{\alpha}}(x^r_1 - x^c_1) p_{i_2 - i_1-2n^{\alpha}}(x_2^l - x_1^r) NI(\mathcal{S}^1,i_1, x_1^c,x_1^r)\\& \times p_{j_\sigma(1) }(y^c_{\sigma(1)})  p_{n^{\alpha}}(y^r_{\sigma(1)} -y^c_{\sigma(1)})  p_{j_{\sigma(2)} - j_{\sigma(1)} - 2n^{\alpha}}(y^r_{\sigma(2)} - y^l_{\sigma(1)})NI(\mathcal{S}^2,j_{\sigma(1)},y^c_{\sigma(1)},y^r_{\sigma(1)}) \\
&\prod_{k=2}^{m-1} NI(\mathcal{S}^1,i_k,x^c_k,x^r_k,x^l_k) p_{n^{\alpha}}(x^c_k - x^r_k) p_{n^{\alpha}}(x^l_k - x^c_k)  p_{i_{k+1} - i_k - 2 n^{\alpha}}( x^l_{k+1} - x^r_{k})
\\& \times  NI(\mathcal{S}^2,j_{\sigma(k)},y^c_{\sigma(k)},y^r_{\sigma(k)},y^l_{\sigma(k)})p_{n^{\alpha}}(y^c_{\sigma(k)} - y^r_{\sigma(k)}) p_{n^{\alpha}}(y^l_{\sigma(k)} - y^c_{\sigma(k)}) \\ &  p_{y_{\sigma(k+1)} - y_{\sigma(k)} - 2n^{\alpha}}(y^l_{\sigma(k+1)} - y^r_{\sigma(k)})\\
&NI(\mathcal{S}^1,i_m,x_m^c,x_m^l) p_{n^{\alpha}}(x_m^c - x_m^l)
 NI(\mathcal{S}^2, j_{\sigma(m)},y_{\sigma(m)}^c, y_{\sigma(m)}^r) p_{n^{\alpha}}(y_{\sigma(m)}^c - y_{\sigma(m)}^l)
\\& \times \prod_{k=1}^m G_D(x^c_k - y^c_k).
\end{aligned}
\end{equation}
The main observation to notice now is that if we were able to freely sum over the values $x^r_k, x^l_k$, then we would have that, by \eqref{interrandom},
\begin{equation*}
\begin{aligned}
&\sum_{x^r_k , x^l_k} NI(\mathcal{S}^1,i_k,x^c_k,x^r_k,x^l_k)  p_{n^{\alpha}}(x^c_k - x^r_k)p_{n^{\alpha}}(x^l_k - x^c_k) \\
= &\mathbb{E}[\mathbb{P}(R'_{\mathcal{S}_i} \cap \mathcal{S}[i- n^{\alpha},i+n^{\alpha}] = \emptyset)] \lesssim \frac{1}{\log (\min\{n-i+2,i+1\}^{\alpha})},
\end{aligned}
\end{equation*}
because this just computes the averaged probability that an infinite random walk does not intersect a the union of two independent random walks of length $n^{\alpha}$ starting from the origin. 

The only term that prevents us from freely summing over $x^r_k$ and $x^l_k$ for all $k$ is the term $p_{i_{k+1} - i_k - 2 n^{\alpha}} (x^l_{k+1} - x^r_k)$. However, if we could bound this term from above by a constant times $p_{i_{k+1} - i_k}(x^c_{k+1} - x^c_k)$, then we would be able to freely sum over the variables $x^r_k$ and $x^l_k$ as desired. This is what we will argue now.

It is clear that $\|x^l_{k} - x^c_k\| \le n^{\alpha}$ and $\|x^r_k - x^c_k\| \le n^{\alpha}$. Provided that $\|x^{l}_{k+1} - x^{r}_k\| \le (i_{k+1} - i_k)^{1/2 + \epsilon}$ for some small $\epsilon$, we can apply the local central limit as in \cite[Theorem 2.3.12, equation (2.46)]{LL10} along with the fact that $i_{k+1} - i_k \ge n^{3 \alpha}$ to show that,
\begin{equation*}
p_{i_{k+1} - i_k - 2n^{\alpha}}(x^l_{k+1} - x^r_k) \le (1 +o(1)) p_{i_{k+1} -i_k}(x^c_{k+1} - x^c_k).
\end{equation*}

Otherwise, the probability that $\|x^l_{k+1} - x^{r}_k\| \ge (i_{k+1} - i_k)^{1/2 +\epsilon}$ is exponentially unlikely with probability at most  $\exp[-n^{6\alpha \epsilon}]$. 
Thus, we always have the bound,
\begin{align*}
&p_{i_{k+1} - i_k - 2n^{\alpha}}(x^l_{k+1} - x^r_k) \\
\le & (1 +o(1)) p_{i_{k+1} -i_k}(x^c_{k+1} - x^c_k)\\&  + \mathbbm{1}[\|x^l_{k+1} - x^{r}_k\| \ge (i_{k+1} - i_k)^{1/2 +\epsilon}] p_{i_{k+1} - i_k - 2n^{\alpha}}(x^l_{k+1} - x^r_k).
\end{align*} 
Similar statements also hold for $j$ and $y$.

Furthermore,  this term can be substituted into  equation \eqref{eq:verylong} by replacing each appearance of $p_{i_{k+1} - i_k - 2n^{\alpha}}(x^l_{k+1} - x^r_k)$ with the right hand side above.  We can expand each of these products to get a sum over $2^{4m}$ terms  (in each of these terms, $p_{i_{k+1} - i_k - 2n^{\alpha}}(x^l_{k+1} - x^r_k) $ is replaced  with either $p_{i_{k+1} -i_k}(x^c_{k+1} - x^c_k)$  or $ \mathbbm{1}[\|x^l_{k+1} - x^{r}_k\| \ge (i_{k+1} - i_k)^{1/2 +\epsilon}] p_{i_{k+1} - i_k - 2n^{\alpha}}(x^l_{k+1} - x^r_k)$). There is only one of these terms in which each $p_{i_{k+1} - i_k - 2n^{\alpha}}(x^l_{k+1} - x^r_k) $ is replaced with $p_{i_{k+1} -i_k}(x^c_{k+1} - x^c_k)$.

We remark that if even one of the $p_{i_{k+1} - i_k - 2n^{\alpha}}(x^l_{k+1} - x^r_k)$ were replaced with $  \mathbbm{1}[\|x^l_{k+1} - x^{r}_k\| \ge (i_{k+1} - i_k)^{1/2 +\epsilon}] p_{i_{k+1} - i_k - 2n^{\alpha}}(x^l_{k+1} - x^r_k)$, then such a term would be exponentially suppressed.  Indeed, we could trivially bound all the terms of the form $G_D(x- y)$ and $\mathbb{P}(R'_{\mathcal{S}_t} \cap \mathcal{S}[t - n^{\alpha}, t+ n^{\alpha}]\cap \mathcal{S}=\emptyset)$ by a constant. Performing a trivial summation shows that this term can be no more than $n^{2m} \exp[- n^{6\alpha \epsilon}] \ll (m!)^2 \frac{n^m}{(\log n)^{2m}}$ provided $m \le (\log n)^2$.  Furthermore, there are no more than $2^{4m}$ such terms. Thus, these terms are clearly negligible.

Now we consider the term in which all $p_{i_{k+1} - i_k - 2n^{\alpha}}(x^l_{k+1} - x^r_k)$ are replaced with $(1 +o(1)) p_{i_{k+1} -i_k}(x^c_{k+1} - x^c_k) $. In such a term, we can finally sum over $x^r_k,x^l_k, y^r_k,y^l_k$ for all $k$. 
Such a term will be bounded by,
\begin{equation*}
\begin{aligned}
&\left(\frac{C}{\log n}  \right)^{2m}   m! \sum_{\sigma \in \Pi_m} \sum_{i_1  \le \ldots \le i_m} \sum_{j_1 \le \ldots \le j_m} \sum_{x^c_1,\ldots,x^c_k} \sum_{y^c_1,\ldots,y^c_k} p_{i_1}(x^c_1) p_{j_{\sigma(1)}}(y^c_{\sigma(1)})\\
& \times \prod_{k=1}^{m-1} p_{i_{k+1} - i_k}(x_{k+1} - x_k) p_{j_{\sigma(k+1)} - j_{\sigma(k)}}(y_{\sigma(k+1)} - y_{\sigma(k)}) \prod_{k=1}^m G_D(x_k^c- y_k^c).
\end{aligned}
\end{equation*}
However, the last term computes the $m$-th moment of $\sum_{i=1}^n \sum_{j=1}^n G_D(\mathcal{S}^1_i - \mathcal{S}^2_j)$. 
This is bounded by $C^m m! n^m$ for some $C>0$. Thus, we can bound the line above by $$m! \left(\frac{ C n}{(\log n)^2} (1+ o(1)) \right)^m .$$ This completes the proof of the claim.

\end{proof}

\subsection{Lower Bound for the Large Deviation of $TL'_n$}

Our goal in this section is to show the following statement.



In this section, our goal is to understand the lower bound of,
\begin{equation*}
\frac{1}{b_n} \log \sum_{m=0}^{\infty} \frac{1}{m!} \theta^{m} \left(\frac{b_n (\log n)^2}{n} \right)^{m/2} \left(  \mathbb{E}[(TL'_n)^m]\right)^{1/2}.
\end{equation*}
\begin{thm}
If $b_n = \text{O}(\log \log  n)$  and satisfies $\lim_{n \to \infty} b_n = \infty$, one has that for any $\theta>0$,
\begin{equation*}
\liminf_{n \to \infty}\frac{1}{b_n} \log \sum_{m=0}^{\infty} \frac{1}{m!} \theta^{m} \left(\frac{b_n (\log n)^2}{n} \right)^{m/2} \left(  \mathbb{E}[(TL'_n)^m]\right)^{1/2} \ge \tilde{\kappa}(4,2)^{4} \frac{\pi^4 \theta^2}{8}.
\end{equation*}

\end{thm}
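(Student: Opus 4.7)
The plan is to follow the strategy of \cite[Theorem 7.2.1]{Chenbook}: establish the generating function lower bound via (i) a block decomposition of $TL'_n$ that reduces moments to those of a single Brownian-scale quantity, and (ii) the variational representation of the Brownian moment that produces the generalized Gagliardo--Nirenberg constant $\tilde{\kappa}(4,2)$.

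First I would handle the scaling limit for a single block. Writing $TL'_n = \sum_{i,j=1}^{b_n} TL^{(i,j)}_n$ with $TL^{(i,j)}_n$ built from the walk segments $\mathcal{S}^{1,i}, \mathcal{S}^{2,j}$ of duration $n/b_n$, the invariance principle behind \cite[Proposition 6.1]{As5} applied at scale $n/b_n$ gives
\begin{equation*}
\frac{(\log(n/b_n))^2}{n/b_n}\, TL^{(i,j)}_n \;\stackrel{\D}{\Longrightarrow}\; \frac{\pi^4}{4} \int_0^1\!\!\int_0^1 G(B^1_s - B^2_t)\,ds\,dt.
\end{equation*}
Claim \ref{clm:Tlprbnd} (applied at scale $n/b_n$) furnishes uniform $L^p$ control and hence convergence of all moments.

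Next I would insert a sharp Brownian moment lower bound. A Feynman--Kac analysis of the two-body Brownian motion with convolution potential $G$ yields
\begin{equation*}
\liminf_{m \to \infty} \frac{1}{m}\log\frac{\mathbb{E}\!\left[\Bigl(\int_0^1\!\int_0^1 G(B^1_s-B^2_t)\,ds\,dt\Bigr)^m\right]}{m!} \;=\; 4\log \tilde{\kappa}(4,2),
\end{equation*}
where the rate $\tilde{\kappa}(4,2)^4$ is identified as the supremum in the variational characterization defining $\tilde{\kappa}(4,2)$, with optimization taking place among normalized $g \in H^1(\RR^4)$ via the generalized Gagliardo--Nirenberg inequality of \cite[Theorem 2.3]{FY15}.

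Then I would assemble everything via the block structure. Exploiting the essential independence of the $b_n^2$ blocks (boundary overlaps being negligible exactly as in the proof of Proposition \ref{prop:TltoTlpr}), the multinomial expansion of $\mathbb{E}[(TL'_n)^m]$ admits lower bounds from configurations that simultaneously (a) concentrate several indices $(i_k,j_k)$ on one block, producing the $m!$ factor of Step 2, and (b) distribute the remaining indices over many distinct blocks, producing the required power of $b_n$. Summing the resulting moment bounds with the $\tfrac{\theta^m}{m!}\bigl(\tfrac{b_n(\log n)^2}{n}\bigr)^{m/2}$ weights, and using $\log\sum_{m \ge 0} z^m/\sqrt{m!} \sim z^2/2$ for $z = \theta\sqrt{b_n}\cdot\tfrac{\pi^2}{2}\tilde{\kappa}(4,2)^2 \to \infty$, produces the target $\tfrac{\pi^4}{8}\tilde{\kappa}(4,2)^4\,\theta^2$ after dividing by $b_n$.

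The main obstacle will be Step 2: extracting the sharp constant $\tilde{\kappa}(4,2)^4$ from the Brownian moment. In the classical two-dimensional self-intersection setting \cite[Chapter 7]{Chenbook} the relevant potential is pointwise ($f^2$) and Feynman--Kac reduces directly to a Schr\"odinger eigenvalue problem; here the ``potential'' is the non-local bilinear form $\int g^2(x)G(x-y)g^2(y)\,dx\,dy$, so the spectral problem must be recast as a maximization over this bilinear form subject to the $\|g\|_{L^2}$ and $\|\nabla g\|_{L^2}$ constraints. Existence and characterization of the maximiser is provided by \cite{FY15,MS}. Once this identification is in place, the combinatorial block assembly and the final summation are direct adaptations of the upper bound argument in Lemma \ref{lem:Apriorbnd}.
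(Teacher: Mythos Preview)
Your heuristic is sound but there is a genuine gap in Step~4, and the obstacle you flag in Step~2 is not the real one. The Brownian large-deviation rate $\tilde\kappa(4,2)^{-4}$ for $\int_0^1\int_0^1 G(B^1_s-B^2_t)\,ds\,dt$ is already known (Remark~\ref{ggnfi}, \cite{BCR2,AO}); the difficulty is the assembly. Your Step~1 gives, for each \emph{fixed} $m$, the convergence $\mathbb{E}[((\log n)^2 n^{-1} TL'_n)^m]\to(\pi^4/4)^m\mathbb{E}[(\int\!\int G)^m]$, and hence $a_m^{(n)}:=\tfrac{\theta^m}{m!}\bigl(\tfrac{b_n(\log n)^2}{n}\bigr)^{m/2}\mathbb{E}[(TL'_n)^m]^{1/2}\sim c_m b_n^{m/2}$ with $c_m$ independent of $n$. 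But the dominant contribution to $\sum_m a_m^{(n)}$ sits at $m\asymp b_n\to\infty$; truncating at any fixed $M$ gives $\tfrac{1}{b_n}\log\sum_{m\le M}a_m^{(n)}\to 0$. So you would need the moment convergence to be \emph{uniform} in $m$ up to $m\sim b_n$, which neither the invariance principle nor Claim~\ref{clm:Tlprbnd} provides. Your multinomial sketch does not address this, and in any case the $b_n^2$ blocks are not independent ($TL^{(i,j)}$ and $TL^{(i,j')}$ share $\mathcal{S}^{1,i}$). Note also that the upper-bound argument in Lemma~\ref{lem:Apriorbnd} works precisely because dominated convergence handles the tail; there is no Fatou-type analogue that recovers the full contribution from $m\sim b_n$ here.

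The paper's proof avoids moment asymptotics altogether by removing the square root. It factorizes $G_D=\tilde G_D*\tilde G_D$, writes $\mathbb{E}[(TL'_n)^m]=\sum_{a_1,\dots,a_m}\prod_i\mathcal{G}_n(a_i)^2$, and then inserts a test function $f$ via Cauchy--Schwarz to obtain $\mathbb{E}[(TL'_n)^m]^{1/2}\ge\mathbb{E}\bigl[(\sum_{i=1}^{b_n}\mathcal{F}_n^i)^m\bigr]$, where each $\mathcal{F}_n^i$ depends only on the $i$-th segment of $\mathcal{S}^1$. This turns the generating function into a genuine exponential moment $\mathbb{E}\exp\bigl[\theta\sqrt{b_n}\log n/\sqrt{n}\,\sum_i\mathcal{F}_n^i\bigr]$, which is then lower-bounded by a Feynman--Kac operator argument: one introduces a symmetric transfer operator $\mathfrak{B}_n$, uses $\langle\xi_n,\mathfrak{B}_n^{b_n-1}\xi_n\rangle\ge\langle\xi_n,\mathfrak{B}_n\xi_n\rangle^{b_n-1}$, and passes to the Brownian limit on the \emph{single} factor $\langle\xi_n,\mathfrak{B}_n\xi_n\rangle$ (where fixed-$m$ convergence suffices). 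Optimizing over the test function $f$ then produces the variational formula and the constant $\tilde\kappa(4,2)^4$. The factorization/Cauchy--Schwarz step is the idea you are missing.
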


\begin{proof}
Recall that we let $\mathcal{S}^{k,i}=\mathcal{S}^k \left[(i-1) \frac{n}{b_n}, i \frac{n}{b_n}\right]$. 
Without loss of generality, we assume that $b_n$ is odd. 
First, notice that 
\begin{equation*}
TL'_n = \sum_{a \in \mathbb{Z}^4} \sum_{i=1}^{b_n} \sum_{x^1 \in \mathcal{S}^{1,i}} \tilde{G}_D(x^1 -a)  \mathbb{P}(R'_{x^1} \cap \mathcal{S}^{1,i} = \emptyset) \sum_{j=1}^{b_n} \sum_{x^2 \in \mathcal{S}^{2,j}} \tilde{G}_D(x^2 - a) \mathbb{P}(R'_{x^2} \cap \mathcal{S}^{2,j} = \emptyset).
\end{equation*}
If we let
\begin{equation*}
\mathcal{G}_n (a):= \mathbb{E}\left[ \sum_{i=1}^{b_n} \sum_{x^1 \in \mathcal{S}^{1,i}}  \tilde{G}_D(x^1 - a) \mathbb{P}(R'_{x^1} \cap \mathcal{S}^{1,i} = \emptyset)\right],
\end{equation*}
we see that,
\begin{equation*}
\mathbb{E}[(TL'_n)^m] = \sum_{a_1,\ldots,a_m}  \prod_{i=1}^m (\mathcal{G}_n(a_i))^2.
\end{equation*}

If we now let $f$ be any smooth function with finite support satisfying $\int_{\mathbb{R}^4}f(a)^2 \text{d}a = 1$  and $C_f^n:= \sum_{a \in \mathbb{Z}^4} f\left(\sqrt{\frac{b_n}{n}}a\right)^2$, by the Cauchy-Schwartz inequality, we see that we obtain that,
\begin{equation*}
\begin{aligned}
\mathbb{E}[(TL'_n)^m]^{1/2} &= (C_f^n )^{-m/2}\left(\sum_{a_1,\ldots,a_m}  \prod_{i=1}^m (\mathcal{G}_n(a_i))^2  \right)^{1/2} \left(\sum_{a_1,\ldots,a_m} \prod_{i=1}^m f\left( \sqrt{\frac{b_n}{n}}a_i \right)^2 \right)^{1/2}\\
& \ge (C_f^n)^{-m/2} \sum_{a_1,\ldots,a_m} \prod_{i=1}^m\mathcal{G}_n(a_i) f\left( \sqrt{\frac{b_n}{n}}a_i\right).
\end{aligned}
\end{equation*}
Defining 
\begin{equation*}
\mathcal{F}_n^i :=  (C_f^n)^{-1/2}  \sum_{x^1 \in \mathcal{S}^{1,i}}\sum_{a \in \mathbb{Z}^4 }\tilde{G}_D(x^1-a) f\left( \sqrt{\frac{b_n}{n}}a\right) \mathbb{P}(R'_{x^1} \cap \mathcal{S}^{1,i}= \emptyset),
\end{equation*}
we see that
\begin{equation*}
\mathbb{E}[(TL'_n)^m]^{1/2} \ge \mathbb{E}\left[ \left(\sum_{i=1}^{b_n} \mathcal{F}_n^i \right)^m\right].
\end{equation*}
Thus, we see that,
\begin{equation} \label{eq: TLtoF}
\sum_{m=0}^{\infty} \frac{1}{m!} \theta^{m} \left(\frac{b_n (\log n)^2}{n} \right)^{m/2} \left(  \mathbb{E}[(TL'_n)^m]\right)^{1/2} \ge \mathbb{E}\left[ \exp\left[\theta \frac{\sqrt{b_n} (\log n)}{\sqrt{n}} \sum_{i=1}^{b_n}\mathcal{F}^i_n \right] \right].
\end{equation}
Furthermore, $\mathcal{F}_n^i$ is a function of only the portion $\mathcal{S}^{1,i}$ of the random walk. Hence, notice that for any $\epsilon>0$, we have,
\begin{equation*}
\begin{aligned}
\frac{1}{b_n} \log \mathbb{E} \left[ \exp\left[  \theta \frac{\sqrt{b_n} \log n}{\sqrt{n}} \sum_{i=1}^{b_n} \mathcal{F}^i_n\right] \right] 
\ge & \frac{1}{b_n} (1+\epsilon) 
\log \mathbb{E}\left[ \exp\left[ \frac{1}{1+\epsilon}\theta \frac{\sqrt{b_n} \log n}{\sqrt{n}} \sum_{i=2}^{b_n} \mathcal{F}^i_n \right]\right]\\
& - \frac{1}{b_n} \epsilon \log \mathbb{E}\left[ \exp\left( - \frac{1+\epsilon}{\epsilon}\theta \mathcal{F}^1_n\right) \right].
\end{aligned}
\end{equation*}

Notice that for any fixed $\epsilon$, an upper bound on the large deviation statistics of $\mathcal{F}^1_n$ (which can be inherited from an upper bound large statistics on $TL'_{\frac{n}{b_n}}$
 as in equation \eqref{eq: TLtoF}) shows that as $n \to \infty$ the term on the right goes to $0$. Finally, we can take $\epsilon$ to $0$ to note that the term,
\begin{equation*}
\begin{aligned}
& \liminf_{n \to \infty}\frac{1}{b_n} \log \mathbb{E} \left[ \exp\left[  \theta \frac{\sqrt{b_n} \log n}{\sqrt{n}} \sum_{i=1}^{b_n} \mathcal{F}^i_n\right] \right] \\& \ge \lim_{\epsilon \to 0}\liminf_{n \to \infty}\frac{1}{b_n} (1+\epsilon) \log 
\mathbb{E}\left[ \exp\left[ \frac{1}{1+\epsilon}\theta \frac{\sqrt{b_n} \log n}{\sqrt{n}} \sum_{i=2}^{b_n} \mathcal{F}^i_n \right] \right].
\end{aligned}
\end{equation*}

To find the  lower bound on the term on the left hand side, it suffices to find a bound for the right hand side. 
We are now in a very similar situation to that of \cite[Theorem 7.1.2]{Chenbook}. The functions $\mathcal{F}^i_n$ are not exactly in the same format. However, one can see that $\mathcal{F}^i_n$ takes the same role as that of the term $\sum_{x \in \mathcal{S}^{1,i}} f( \sqrt{\frac{b_n}{n}}x)$. 
Indeed, we can define the operator,
$$
\begin{aligned}
\mathfrak{B}_n \xi(x) := & \mathbb{E}\bigg(  \exp \bigg\{ \frac{\sqrt{b_n} (\log n)}{\sqrt{n}} \sum_{y-x \in \mathcal{S}[1,n b_n^{-1}]}  (C^n_f)^{-1/2}\sum_{a \in \mathbb{Z}^4} \tilde{G}_D(y-a) f \left( \sqrt{\frac{b_n}{n}}a \right)\\ & \times  \mathbb{P}\left(R'_{y-x} \cap \mathcal{S}[1, b_n n^{-1}]  = \emptyset\right)  \bigg\} \xi(x+ \mathcal{S}_{n b_n^{-1}} )\bigg).
\end{aligned}
$$

 We define $\xi_n$ as the following discretization of $g$. Namely, $\xi_n(x) = \frac{1}{C_g^{1/2}}g(\frac{x}{\sqrt{\det(\Gamma)}\sqrt{n}})$ where $C_g:= \sum_{x \in \mathbb{Z}^4} g^2(\frac{x}{\sqrt{\det(\Gamma)}\sqrt{n}})$, where $\Gamma=4^{-1} I$.

This operator is a symmetric operator and following the proof of \cite[Lemma 7.1.3]{Chenbook}. We see that we can derive the following bound.  Let $g$ be a bounded function on $\mathbb{R}^4$ that is infinitely differentiable and supported on a finite box with $\int_{\mathbb{R}^4} g^2(x) \text{d}  x =1$. By the Cauchy–Schwarz inequality, there exists a constant $\delta$ depending only on $g$ (but not on $n$) that (recall that $b_n-1$ is even)
\begin{equation*}
\mathbb{E}\left[ \exp\left[ \theta \frac{\sqrt{b_n} \log n}{\sqrt{n}} \sum_{i=2}^{b_n} \mathcal{F}^i_n\right] \right] \ge \delta \langle \xi_n, \mathfrak{B}_n^{b_n -1} \xi_n \rangle \ge \delta \langle \xi_n, \mathfrak{B}_n \xi_n \rangle^{b_n -1},
\end{equation*}
where $\langle \xi_n, \mathfrak{B}_n \xi_n \rangle$ is given by
\begin{equation}\label{invariant1}
\begin{aligned}
\langle \xi_n, \mathfrak{B}_n \xi_n \rangle &= (1+ o(1)) \int_{\mathbb{R}^4} \text{d}x g(x) \\
& \times \mathbb{E}\bigg( \exp\bigg[ \theta\frac{\sqrt{b_n} \log n}{\sqrt{n}} \sum_{y \in \mathcal{S}[1, nb_n^{-1}]}(C_f^n)^{-1/2} \sum_{a \in \mathbb{Z}^4} \tilde{G}_D(y +x -a) f\left( \sqrt{\frac{b_n}{n}} a\right)\\ & \times \mathbb{P}(R'_y \cap \mathcal{S}[1, nb_{n}^{-1}] =\emptyset) \bigg] g\left(x + \sqrt{\frac{b_n}{n}} \mathcal{S}_{nb_n^{-1}} \right) \bigg)\\
& \rightarrow \int_{\mathbb{R}^4}\text{d}x g(x)  \mathbb{E}\left( \exp\left\{ \int_0^1 \frac{\pi^2}{4}(\tilde{G}*f)(x+ B(t/4))  \text{d}t \right\} g(x + B(1/4)) \right)
\end{aligned}
\end{equation}
as $n \to \infty$, where $B$ is the $4-d$ Brownian motion. 
Note that by Lemmas \ref{lem:contGeps} and \ref{lem:Hestimate}, $\tilde{G} * f $ is a bounded continuous function and $\tilde{G}_D*f$ uniformly converges to $2\tilde{G}*f$. 
Then, the invariance principle shows the convergence above. 
By \cite[(4.1.25)]{Chenbook}, if we take $\log $ to the most right hand side in \eqref{invariant1}, it is equal to
\begin{equation*}
\begin{aligned}
\sup_{h \in \mathcal{F}} 
\bigg\{ \frac{\pi^2}{4} \int_{\mathbb{R}^4} \tilde{G}*f (\Gamma^{1/2}x) h(x)^2 \text{d}x
- \frac{1}{2} \int_{\mathbb{R}^4} |\nabla h (x)|^2 \text{d}x \bigg\}, 
\end{aligned}
\end{equation*}
where $\mathcal{F}:=\{h: \int h(x)^2 dx=1, \int |\nabla h(x)|^2 dx<\infty \}$. 
Taking the supremum over f with $\int f(x)^2 dx=1$, it is larger than or equal to, 
\begin{equation*}
\sup_{h \in \mathcal{F}} 
\bigg\{ \frac{\pi^2}{4}  \bigg(\int\int_{(\mathbb{R}^4)^2} G(x-y) h(x)^2 h(y)^2 \text{d}x \text{d}y \bigg)^{1/2}
- \frac{1}{8} \int_{\mathbb{R}^4} |\nabla h (x)|^2 \text{d}x \bigg\}. 
\end{equation*}
Therefore, by the same proof as \cite[Proposition 4.1]{AO}, we obtain the desired result. 

 Let us explain some steps in the derivation in \eqref{invariant1}. First, we remark that the term inside the exponential has finite expectation.  Secondly, we also have the second moment comparison estimate 
\begin{equation}
\begin{aligned}
&\mathbb{E} \bigg[ \frac{\sqrt{b_n} \log n}{\sqrt{n}} \sum_{y \in \mathcal{S}[1, nb_n^{-1}]}(C_f^n)^{-1/2} \sum_{a \in \mathbb{Z}^4} \tilde{G}_D(y +x -a) f\left( \sqrt{\frac{b_n}{n}} a\right) \mathbb{P}(R'_y \cap \mathcal{S}[1, nb_{n}^{-1}] =\emptyset) \\
&-  \frac{\pi^2}{8} \frac{\sqrt{b_n}}{\sqrt{n}}(C_f^n)^{-1/2} \sum_{i=1}^{nb_n^{-1}} \sum_{a \in \mathbb{Z}^4} \tilde{G}_D(\mathcal{S}_i +x -a) f\left( \sqrt{\frac{b_n}{n}} a\right)  \bigg]^2 \to 0
\end{aligned}
\end{equation} 
as $n \to \infty$. 
 As before, this follows from computations similar to those found in the proof of Claim   \ref{claim:TL} to allow us to replace the term of 
 $\mathbb{P}(R'_y \cap S[1,nb_{n}^{-1}] = \emptyset)$ with $(1+o(1))\frac{\pi^2}{8 \log n}$ with the aid of \cite[Theorem 5.1]{As5}.  Combining these observations, we see that as $n\to \infty$ 
\begin{equation*}
\begin{aligned}
&\bigg|\int_{\mathbb{R}^4} \text{d}x g(x) \mathbb{E}\bigg( \exp\bigg[ \theta\frac{\sqrt{b_n} \log n}{\sqrt{n}} \sum_{y \in \mathcal{S}[1, nb_n^{-1}]}(C_f^n)^{-1/2} \sum_{a \in \mathbb{Z}^4} \tilde{G}_D(y +x -a) f\left( \sqrt{\frac{b_n}{n}} a\right)\\ & \times \mathbb{P}(R'_y \cap \mathcal{S}[1, nb_{n}^{-1}] =\emptyset) \bigg] g\left(x + \sqrt{\frac{b_n}{n}} \mathcal{S}_{nb_n^{-1}} \right) \bigg) \\
-&\int_{\mathbb{R}^4} \text{d}x g(x) \mathbb{E}\bigg( \exp\bigg[ \theta \frac{\pi^2}{8}\frac{\sqrt{b_n} }{\sqrt{n}} \sum_{i=1}^{nb_n^{-1}}(C_f^n)^{-1/2} \sum_{a \in \mathbb{Z}^4} \tilde{G}_D(\mathcal{S}_i +x -a) f\left( \sqrt{\frac{b_n}{n}} a\right) \bigg]\\ & \times  g\left(x + \sqrt{\frac{b_n}{n}} \mathcal{S}_{ nb_n^{-1}} \right) \bigg) \bigg|
\to 0.
\end{aligned}
\end{equation*}
Then we obtain the result. 

\end{proof}

\begin{appendix}

\section{Green's Function Estimates} \label{sec:weakconv}

In this section, we will establish various technical estimates necessary to show weak convergence of discrete quantities to continuum quantities. 

\subsection{The property of $\tilde{G} * f $}

In this subsection, we show that $\tilde{G} * f $ is a bounded continuous function and establish the uniform convergence of $\tilde{G}_D*f \to 2\tilde{G}*f$. 
We assume that $f$ is smooth bounded function with finite support.  

\begin{lem} \label{lem:contGeps}
There is some constant such that the following estimates hold uniformly in $a$,
\begin{equation*}
| (\tilde{G}*f)(a)| \lesssim 1, 
\quad  |(\tilde{G}*f)(a+ \kappa) - (\tilde{G}*f)(a)| \lesssim \kappa. 
\end{equation*}
\end{lem}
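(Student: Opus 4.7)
The plan is to reduce both bounds to a single integrability estimate on $\tilde G$. First I would recall that in $\mathbb{R}^4$ the convolutional square root of $G(x)=(2\pi^2)^{-1}\|x\|^{-2}$ is a Riesz potential of the form $\tilde G(x)=c_4\|x\|^{-3}$ (from the Riesz semigroup identity $I_1*I_1=I_2$), which is locally integrable in dimension four but whose gradient $\nabla\tilde G(x)\sim\|x\|^{-4}$ is not. This asymmetry will force the Lipschitz argument to move the derivative onto $f$ rather than onto $\tilde G$.

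For the boundedness claim, since $f$ is smooth with compact support, say $\mathrm{supp}(f)\subset B_R(0)$ and $\|f\|_\infty\le M$, it suffices to show
\[
\sup_{a\in\mathbb{R}^4}\int_{B_R(0)} \|a-y\|^{-3}\,dy < \infty.
\]
I would split the integral into the near zone $\{\|a-y\|\le 1\}$ and the far zone. On the near zone, spherical coordinates give $\int_{\|a-y\|\le 1}\|a-y\|^{-3}\,dy=\omega_3\int_0^1 r^{-3}\,r^3\,dr=\omega_3$, uniform in $a$. On the far zone, the integrand is at most $1$, so the contribution is bounded by $|B_R(0)|$. Combining gives a uniform bound on $|(\tilde G*f)(a)|$.

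For the Lipschitz-type estimate, rather than trying to differentiate $\tilde G$ (which fails because $\nabla\tilde G$ is not locally integrable), I would change variables and write $(\tilde G*f)(a)=\int \tilde G(z)f(a-z)\,dz$, so that for any vector $\kappa$ of norm $|\kappa|$ the fundamental theorem of calculus gives
\[
(\tilde G*f)(a+\kappa)-(\tilde G*f)(a)=\int_0^1\!\!\int_{\mathbb{R}^4}\tilde G(z)\,\nabla f(a+s\kappa-z)\cdot\kappa\,dz\,ds.
\]
Fubini is justified because $\tilde G$ is locally integrable and $\nabla f$ is bounded with compact support, so the integrand is absolutely integrable on the product. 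The inner integral equals $\kappa\cdot(\tilde G*\nabla f)(a+s\kappa)$, and by the boundedness step applied to $\nabla f$ in place of $f$, this is uniformly bounded. Taking absolute values and integrating in $s$ yields the desired $O(|\kappa|)$ estimate.

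The only genuinely delicate point is the Fubini/difference-quotient step, since one might be tempted to differentiate under the integral sign against $\tilde G$; once one writes the increment as an integral of $\nabla f$ against $\tilde G$, both assertions reduce to the single fact that $\|\cdot\|^{-3}$ is integrable against any bounded, compactly supported function in $\mathbb{R}^4$, uniformly in translates.
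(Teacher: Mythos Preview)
Your proof is correct and follows essentially the same route as the paper: both arguments put the increment onto $f$ via the fundamental theorem of calculus and then reduce everything to showing $\int \tilde G(z)\,h(a-z)\,dz$ is bounded uniformly in $a$ for bounded, compactly supported $h$. The only cosmetic difference is that on the far zone the paper uses Cauchy--Schwarz with $\|f\|_{L^2}$ and $\bigl(\int_{\|e\|\ge 1}\tilde G^2\bigr)^{1/2}$, whereas you use the compact support of $f$ directly to bound the volume; your version is slightly simpler given the standing assumption on $f$.
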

\begin{proof}
First see that,
$$
\begin{aligned}
(\tilde{G}*f)(a) & = \int_{\|e\| \le 1} f(a-e) \tilde{G}(e) \text{d}e + \int_{\|e\| \ge 1} f(a-e) \tilde{G}(e) \text{d}e\\
& \le \sup_{z \in \mathbb{R}^4}|f(z)| \int_{\|e\| \le 1} \tilde{G}(e) \text{d}e + \left[\int_{\mathbb{R}^4} f^2(a-e) \text{d}e\right]^{1/2} \left[\int_{\|e\| \ge 1} \tilde{G}^2(e)\text{d}e \right]^{1/2}. 
\end{aligned}
$$
By applying a similar inequality, we also have that,
$$
\begin{aligned}
& \int_{\mathbb{R}^4} \|\nabla f(a-e)\| \tilde{G}(e)\text{d}e \le \int_{\|e\| \le 1} ||\nabla f(a-e)|| \tilde{G}(e) \text{d}e + \int_{\|e\| \ge 1} \|\nabla f(a-e)\| \tilde{G}(e) \text{d}e\\
&\le \sup_{z \in \mathbb{R}^4} \| \nabla f (z) \| \int_{\|e\| \le 1} \tilde{G}(e)\text{d}e + \left[\int_{\mathbb{R}^4} ||\nabla f(a-e)||^2 \text{d}e\right]^{1/2} \left[ \int_{\|e\|\ge 1} \tilde{G}^2(e) \text{d}e \right]^{1/2}
 \lesssim 1.
\end{aligned}
$$
Thus, we see that,
    \begin{equation*}
    \begin{aligned}
& |(\tilde{G}*f)(a+\kappa) - (\tilde{G}*f)(a)| 
\le \int_{ \mathbb{R}^4} |f(a+\kappa-e) -f(a-e)| \tilde{G}(e) \text{d}e \\ &\le \int_{0}^{\kappa} \text{d}t \int_{ \mathbb{R}^4} \left|\left\langle \nabla f(a+t -e), \frac{\kappa}{||\kappa||} \right\rangle \right| \tilde{G}(e) \text{d}e 
\\&\le \int_{0}^{\kappa} \text{d}t \int_{ \mathbb{R}^4}||\nabla f(a+t-e)|| \tilde{G}(e) \text{d} e\lesssim \kappa
    \end{aligned}
    \end{equation*}
and we obtain the desired result. 

\end{proof}
To introduce the next lemma, we define,
$$
(\tilde{G}_D*f)(\sqrt{n} a) = (C_f)^{-1/2}\frac{1}{n^2} \sum_{z \in \frac{1}{\sqrt{n}}\mathbb{Z}^4} \tilde{G_D}(\sqrt{n}(a -z)) f(z), \quad a \in \frac{1}{\sqrt{n}} \mathbb{Z}^4
$$
and
$$
C_f = \frac{1}{n^2}\sum_{z \in \frac{1}{\sqrt{n}}\mathbb{Z}^4} f^2(z).
$$

\begin{lem} \label{lem:Hestimate} Uniformly in $a$,  we have that as $n\to \infty$,
\begin{equation}
\begin{aligned}
    \left|  2 \int_{\mathbb{R}^4} \tilde{G}(\lfloor a \rfloor_n -e) f(e) \text{d}e  - (C_f)^{-1/2} \frac{1}{n^{2}} \sum_{e \in \frac{1}{\sqrt{n}}\mathbb{Z}^4}  n^{3/2} \tilde{G}_{D}(\sqrt{n} (\lfloor a \rfloor_n -  e) )f(e) \right| = o(1).
\end{aligned}
\end{equation}
\end{lem}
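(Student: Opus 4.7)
The plan is to rewrite the discrete convolution as a Riemann sum and then invoke a pointwise asymptotic for the square-root kernel $\tilde{G}_D$. Substituting $y = \sqrt{n}(\lfloor a\rfloor_n - e)$, which is legitimate because $\sqrt{n}\lfloor a\rfloor_n \in \mathbb{Z}^4$, transforms the right-hand side of the claim into
\[
(C_f)^{-1/2}\frac{1}{\sqrt{n}}\sum_{y \in \mathbb{Z}^4}\tilde{G}_D(y)\,f\!\left(\lfloor a\rfloor_n - \frac{y}{\sqrt{n}}\right).
\]
Since $C_f$ is itself a Riemann sum for $\int_{\mathbb{R}^4} f^2 = 1$ (the normalisation imposed earlier in Section~\ref{sec:TL}), the prefactor tends to $1$ uniformly in $a$, and the claim reduces to comparing the sum on the right with $2\int_{\mathbb{R}^4}\tilde{G}(w)\,f(\lfloor a\rfloor_n - w)\,dw$, uniformly in $a$.

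The central analytic step is the pointwise asymptotic $\tilde{G}_D(y) = 2\tilde{G}(y)(1 + o(1))$ as $\|y\|\to\infty$. To prove this I would use the series representation $\tilde{G}_D = \sum_{n\ge 0} c_n p_n$ with $c_n = \binom{2n}{n}4^{-n} \sim (\pi n)^{-1/2}$, obtained by solving the convolution square-root equation through the generating-function identity $(\sum_n c_n z^n)^2 = (1-z)^{-1}$; combining this with the local CLT bound for $p_n$ from \cite[Theorem~4.3.1]{LL10} and integrating in $n$ yields $\tilde{G}_D(y) \sim (\sqrt{2}\pi^2\|y\|^3)^{-1} = 2\tilde{G}(y)$, where $\tilde{G}(y) = (2\sqrt{2}\pi^2\|y\|^3)^{-1}$ is determined by $\hat{\tilde{G}}^2 = \hat{G}$ in Fourier. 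The factor $2$ can be traced to the SRW covariance being $I/4$ rather than $I$, which forces $G_D(x) = 4G(x) + O(\|x\|^{-4})$ at infinity.

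With this asymptotic in hand, I would fix $\epsilon>0$ and split the sum at $\|y\| = \sqrt{n}\epsilon$. The near-origin contribution is bounded by $\|f\|_\infty n^{-1/2}\sum_{\|y\|\le\sqrt{n}\epsilon}\tilde{G}_D(y) \lesssim \|f\|_\infty\epsilon$, using the annular estimate $\sum_{\|y\|\le R}\|y\|^{-3} \lesssim R$ on $\mathbb{Z}^4$ (together with the finiteness of $\tilde{G}_D$ at the lattice origin), and the matching continuum piece $\int_{\|w\|\le\epsilon}\tilde{G}(w)|f(\lfloor a\rfloor_n - w)|\,dw \lesssim \|f\|_\infty \epsilon$ by integrability of $\|w\|^{-3}$ near $0$ in $\mathbb{R}^4$; both bounds are uniform in $a$. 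On the bulk region $\|y\| > \sqrt{n}\epsilon$, the change of variable $w = y/\sqrt{n}$ together with $\tilde{G}_D(y) = 2n^{-3/2}\tilde{G}(w)(1+o(1))$ converts the sum into a Riemann sum with mesh $1/\sqrt{n}$ for $2\int_{\|w\|>\epsilon}\tilde{G}(w)f(\lfloor a\rfloor_n - w)\,dw$; a standard mesh-based Riemann sum estimate, using the smoothness and compact support of $f$ together with the bounded derivatives of $\tilde{G}$ away from $0$, gives convergence uniform in $a$. Letting $n\to\infty$ and then $\epsilon\to 0$ completes the argument.

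The hardest step will be deriving the sharp $\|y\|^{-3}$ asymptotic for $\tilde{G}_D$ with the correct constant: because the coefficients $c_n \sim (\pi n)^{-1/2}$ decay only as $n^{-1/2}$, both the short-$n$ contributions (where the Gaussian LCLT is inaccurate) and the long-$n$ tail (where $p_n(y)$ decays slowly for $\|y\|\ll \sqrt{n}$) influence the leading constant, so the two regimes must be balanced carefully. Once that asymptotic is in place, the remainder of the argument is a routine Riemann-sum-versus-integral comparison whose uniformity in $a$ follows automatically from the compact support of $f$ and the continuity estimates already supplied by Lemma~\ref{lem:contGeps}.
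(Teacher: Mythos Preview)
Your proposal is correct and follows essentially the same route as the paper: both rest on the pointwise asymptotic $\tilde{G}_D(y)\sim 2\tilde{G}(y)$ (which the paper records as inequality~\eqref{eq:diffGDG} and derives exactly via the series $\tilde{G}_D=\sum_k C_k p_k$, Stirling's formula, and the local CLT, just as you outline), combined with a near-origin/bulk split and a Riemann-sum comparison. The only difference is packaging---the paper uses $n$-dependent inner and outer cutoffs $n^{-\delta_1}$, $n^{\delta_2}$ together with an auxiliary bump $\chi$, whereas your fixed-$\epsilon$ double limit is slightly cleaner since the compact support of $f$ already handles the outer truncation.
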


We start with a few intermediate lemmas. 
The first lemma allows us to reduce the domain of integration of $\tilde{G}*f (a)$ from all of $\mathbb{R}^4$, to an integration over a region of finite support. 
\begin{lem} \label{lem:chi}
Fix some $\delta_2>\delta_1>0$. 
Let $\chi$ be a smooth positive function supported on $[-n^{\delta_2}, n^{\delta_2}]^4$ bounded by $1$ and such that $\chi$ is 1 on $[-n^{\delta_1}, n^{\delta_1}]^4$. 
Then, uniformly in $a \in \mathbb{R}^4$ such that the following estimate holds,
\begin{equation} \label{eq:chimainresu}
    \left|\int_{\mathbb{R}^4} \tilde{G}(a-e)f(e) \text{d}e - \int_{\mathbb{R}^4} \chi(a-e)\tilde{G} (a-e) f(e) \text{d}e \right| \lesssim  n^{- \delta_1} .
\end{equation}
\end{lem}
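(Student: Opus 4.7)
The plan is to observe that the integrand in the difference
\begin{equation*}
I(a) := \int_{\mathbb{R}^4} (1 - \chi(a-e)) \tilde{G}(a-e) f(e) \, \text{d}e
\end{equation*}
vanishes wherever $\chi(a-e) = 1$, in particular on the set $\{\|a-e\|_\infty \le n^{\delta_1}\}$. Hence the effective domain of integration is the tail region $\|a-e\|_\infty \ge n^{\delta_1}$, where $\tilde{G}$ is small, and the estimate should follow from a direct pointwise bound on $\tilde{G}$ combined with the compact support of $f$.

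The key input will be the pointwise decay $\tilde{G}(x) \lesssim \|x\|^{-3}$ for $\|x\|$ bounded away from zero. This follows from the identity $G = \tilde{G}*\tilde{G}$ together with the fact that in $\mathbb{R}^4$ the continuum Green's function satisfies $\widehat{G}(\xi) \propto |\xi|^{-2}$, so the convolutional square root has Fourier multiplier proportional to $|\xi|^{-1}$. Thus $\tilde{G}$ is the Riesz potential of order one on $\mathbb{R}^4$, explicitly $c\|x\|^{-3}$.

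Next I would split into two regimes for $a$. Let $M$ be chosen so that $\text{supp}(f) \subset [-M, M]^4$. In the near regime $\|a\|_\infty \le n^{\delta_1} - M$, every $e \in \text{supp}(f)$ satisfies $\|a-e\|_\infty \le n^{\delta_1}$, so $\chi(a-e) = 1$ and $I(a) = 0$ identically. In the far regime $\|a\|_\infty > n^{\delta_1} - M$, the reverse triangle inequality yields $\|a-e\|_\infty \ge n^{\delta_1}/2$ for every $e \in \text{supp}(f)$, once $n$ is large. The pointwise bound on $\tilde{G}$ then gives $\tilde{G}(a-e) \lesssim n^{-3\delta_1}$ uniformly in $e$, and integrating against $|f|$ produces
\begin{equation*}
|I(a)| \lesssim n^{-3\delta_1} \int_{\mathbb{R}^4} |f(e)| \, \text{d}e \lesssim n^{-3\delta_1}.
\end{equation*}

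Combining the two cases gives $|I(a)| \lesssim n^{-3\delta_1} \le n^{-\delta_1}$ uniformly in $a$, which is the claim. There is no serious obstacle here: the argument is essentially a direct tail estimate for the Riesz kernel, and the required rate $n^{-\delta_1}$ leaves considerable margin since we actually obtain the stronger bound $n^{-3\delta_1}$. The only subtlety worth double-checking is that the implicit constant in $\tilde{G}(x) \lesssim \|x\|^{-3}$ is uniform over the regime used, but this is standard for Riesz kernels.
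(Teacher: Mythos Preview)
Your argument is correct and in fact yields the stronger bound $n^{-3\delta_1}$. The paper takes a different route: it applies the Cauchy--Schwarz inequality to separate $f$ from the kernel,
\[
\Big|\int (1-\chi(a-e))\tilde G(a-e)f(e)\,\text{d}e\Big|
\le \Big[\int_{\|e\|\ge n^{\delta_1}}\tilde G^2(e)\,\text{d}e\Big]^{1/2}\|f\|_{L^2},
\]
and then uses that $\tilde G^2$ is integrable at infinity with $\int_{\|e\|\ge R}\tilde G^2\sim R^{-2}$, giving exactly $n^{-\delta_1}$.

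Your approach exploits the compact support of $f$ more directly: once $\|a\|_\infty$ exceeds $n^{\delta_1}-M$, every $e\in\mathrm{supp}\,f$ is automatically far from $a$, so a pointwise bound on the Riesz kernel suffices and no integrability of $\tilde G^2$ is needed. This is more elementary and sharper here. The Cauchy--Schwarz route has the advantage of requiring only $f\in L^2$ rather than compact support, and of transferring verbatim to the discrete analogue for $\tilde G_D$ stated in the remark after the lemma; your case split on $\|a\|_\infty$ would also work there, but the paper's argument avoids any case analysis entirely.
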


\begin{rem}
By similar methods to Lemma \ref{lem:chi}, we would also have a corresponding equation for $\tilde{G}_D$.
Namely, we would have,
    \begin{equation*}
    \begin{aligned}
        &\bigg|\frac{1}{n^{2}}\sum_{e \in \frac{1}{\sqrt{n}} \mathbb{Z}^4} n^{3/2}\tilde{G}_D(\sqrt{n}a-\sqrt{n}e) f(e)\\
        &- \frac{1}{n^{2}}\sum_{e \in \frac{1}{\sqrt{n}} \mathbb{Z}^4} \chi(a-e) n^{3/2}\tilde{G}_D(\sqrt{n}a-\sqrt{n}e) f(e)\bigg| \lesssim n^{-\delta_1}.
    \end{aligned}
    \end{equation*}
\end{rem}

\begin{proof}
By the Cauchy-Schwartz inequality, we see that we have,
\begin{equation*} 
\begin{aligned}
    &\left|\int_{\mathbb{R}^4} \tilde{G}(a-e)f(e) \text{d}e - \int_{\mathbb{R}^4} \chi(a-e)\tilde{G} (a-e)f(e) \text{d}e \right|\\
    &\le \int_{\mathbb{R}^4} \tilde{G}(a-e)(1-\chi(a-e)) f(e) \text{d}e \\
    & \le  \left[\int_{\mathbb{R}^4} (\tilde{G}(a-e)(1-\chi(a-e)))^2 \text{d}e \right]^{1/2} \left[\int_{\mathbb{R}^4} f^2(e)\text{d}e \right]^{1/2} \\
    & \le  \left[ \int_{\mathbb{R}^4} f^2(e) \text{d}e \right]^{1/2} \left[ \int_{\|e\| \ge  n^{\delta_1}} \tilde{G}^2(e) \text{d}e \right]^{1/2}
    \lesssim  \frac{1}{n^{\delta_1}}.
\end{aligned}
\end{equation*}
It yields the desired result. 
\end{proof}

After the reduction to a region of finite support, our next lemma allows us to replace $\tilde{G}*f$ with an appropriate discrete form closer to one found in the expression of the discrete computation.

\begin{lem} 
We have the following estimates uniform in $a$. Fix some $\delta_1 >\delta_2>0$ sufficiently small, then there is some constant such that,
\begin{equation}\label{eq:tildGdisc}
\Bigg| (\tilde{G}*f)(a) - \frac{1}{n^{2}} \sum_{\substack{z \in \frac{1}{\sqrt{n}} \mathbb{Z}^4\\ \|z\| \ge n^{-\delta_1} \\ \|\lfloor a\rfloor_n-z \| \le n^{\delta_2}}} f(\lfloor a \rfloor_n -z) \tilde{G}(z) \Bigg| \lesssim  n^{-\delta_1}. 
\end{equation}
Here, $\lfloor a \rfloor_n$ denotes the element in the lattice $\frac{1}{\sqrt{n}}\mathbb{Z}^4$ that is formed by considering

$\frac{1}{\sqrt{n}}(\lfloor \sqrt{n}a_1 \rfloor, \ldots , \lfloor \sqrt{n}a_4 \rfloor)$, where we apply the least integer function to each coordinate.

Similarly, one can show that, uniformly in $a \in \frac{1}{\sqrt{n}}\mathbb{Z}^4$, 
\begin{equation} \label{eq:tildGDdisc}
    \Bigg|  \frac{1}{n^2}\sum_{z \in \frac{1}{\sqrt{n}} \mathbb{Z}^4 }n^{3/2}\tilde{G}_{D}(\sqrt{n}(a-z)) f(z) - \frac{1}{n^{2}} \sum_{\substack{z \in \frac{1}{\sqrt{n}} \mathbb{Z}^4 \\ \|z\| \ge n^{-\delta_1}\\ \|a - z\| \le n^{\delta_2}}} f(a-z) n^{3/2} \tilde{G}_D(\sqrt{n}a) \Bigg| 
    \lesssim  n^{-\delta_1}.
\end{equation}

\end{lem}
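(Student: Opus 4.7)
The plan is to prove both \eqref{eq:tildGdisc} and \eqref{eq:tildGDdisc} in parallel, by decomposing the comparison into (i) an outer truncation to the ball $\{\|a - e\| \le n^{\delta_2}\}$, (ii) a near-origin excision of $\{\|z\| \le n^{-\delta_1}\}$ isolating the singular piece of $\tilde{G}$ (resp.\ $\tilde{G}_D$), and (iii) only for \eqref{eq:tildGdisc}, a Riemann-sum comparison on the resulting annulus. Step (i) is already supplied by Lemma \ref{lem:chi} and the analogous Remark for lattice sums, each contributing $O(n^{-\delta_1})$.

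For step (ii) of \eqref{eq:tildGdisc}, I first apply Lemma \ref{lem:contGeps} to shift from $a$ to $\lfloor a \rfloor_n$ at cost $\lesssim n^{-1/2}$, since $\|a - \lfloor a \rfloor_n\| \lesssim n^{-1/2}$. After the change of variables $w = \lfloor a \rfloor_n - e$, the excised tail of the integral is bounded by
\[
\|f\|_\infty \int_{\|w\| \le n^{-\delta_1}} \tilde{G}(w)\, \text{d}w \,\lesssim\, \int_0^{n^{-\delta_1}} r^{-3} \cdot r^{3}\, \text{d}r \,\lesssim\, n^{-\delta_1},
\]
using $\tilde{G}(w) \lesssim \|w\|^{-3}$ in $\mathbb{R}^4$; the discrete counterpart has the same bound by direct Riemann comparison. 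On the annulus $\{n^{-\delta_1} \le \|w\| \le n^{\delta_2}\}$ the integrand is smooth with $\|\nabla_{w} [\tilde{G}(w) f(\lfloor a \rfloor_n - w)]\| \lesssim \|w\|^{-4}$ (by Leibniz, using $\|\nabla \tilde{G}(w)\| \lesssim \|w\|^{-4}$ together with the boundedness of $f$ and $\nabla f$), giving a Riemann-sum error per cube of side $n^{-1/2}$ of order $n^{-5/2}\|w_0\|^{-4}$. Summing over the $\approx n^{2} r^{3}\,\text{d}r$ lattice points per shell yields
\[
\int_{n^{-\delta_1}}^{n^{\delta_2}} n^{2} r^{3} \cdot n^{-5/2}\, r^{-4}\, \text{d}r
\,=\, n^{-1/2} \int_{n^{-\delta_1}}^{n^{\delta_2}} \frac{\text{d}r}{r}
\,\lesssim\, \frac{\log n}{\sqrt{n}},
\]
which is $o(n^{-\delta_1})$ provided $\delta_1 < 1/2$.

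For \eqref{eq:tildGDdisc} no Riemann-sum step is needed, and step (ii) uses the standard lattice asymptotic $n^{3/2}\tilde{G}_D(\sqrt{n} w) \lesssim \|w\|^{-3}$ off the origin, which yields the same $O(n^{-\delta_1})$ bound on the excised tail; the isolated lattice point $w = 0$ contributes at most $n^{-2} \cdot n^{3/2} \tilde{G}_D(0) = O(n^{-1/2})$ and is absorbed. Assembling the three estimates in each case gives both inequalities.

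The main obstacle is the balance condition on $\delta_1$: it must be large enough that the near-origin excision costs only $O(n^{-\delta_1})$ (automatic from the $\|w\|^{-3}$ behavior), yet small enough that the Riemann-sum error on the annulus also stays within $O(n^{-\delta_1})$, which forces $\delta_1 < 1/2$. Once this is fixed, all remaining manipulations reduce to standard estimates on $\tilde{G}$, $\tilde{G}_D$, and the compactly supported $f$.
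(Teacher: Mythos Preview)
Your proposal is correct and follows essentially the same three-step skeleton as the paper's proof: outer truncation, excision of the singular region $\{\|z\|\le n^{-\delta_1}\}$, and a Riemann-sum comparison on the remaining set. Two minor remarks: first, step (i) is even easier than you indicate, since $f$ has compact support and the constraint $\|\lfloor a\rfloor_n - z\|\le n^{\delta_2}$ is on the argument of $f$, so the excluded tail is identically zero for large $n$ (the paper uses this directly rather than invoking Lemma \ref{lem:chi}); second, in step (iii) the region is not literally an annulus in $w$ but rather $\{\|w\|\ge n^{-\delta_1}\}$ intersected with the (shifted) support of $f$, though your gradient bound $\lesssim \|w\|^{-4}$ and the resulting $O(n^{-1/2}\log n)$ error remain valid on that set. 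The paper obtains the slightly coarser Riemann error $n^{\delta_1-1/2}$ by writing $|\tilde G(z)-\tilde G(\lfloor z\rfloor_n)|\lesssim n^{\delta_1-1/2}\tilde G(z)$ and integrating against $\tilde G$; both bounds suffice once $\delta_1$ is small.
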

\begin{proof}
We will only consider proving equation \eqref{eq:tildGdisc}; the proof for \eqref{eq:tildGDdisc} would be simpler.  
First, observe that
\begin{equation} \label{eq:GaussInside}
 \int_{\|z\| \le n^{-\delta_1}} f(a-z) \tilde{G}(z) \text{d}z \lesssim  \int_{\|z\| \le n^{-\delta_1}} \frac{1}{\|z\|^3} \text{d} z \lesssim  n^{-\delta_1}.
\end{equation}

Secondly, we have that, for all sufficiently large $n$, 
\begin{equation} \label{eq:GaussOutside}
\begin{aligned}
 \int_{\substack{\|z\| \ge n^{-\delta_1}\\\|a-z\| \ge n^{\delta_2}}} f(a-z) \tilde{G}(z) \text{d}z=0.
\end{aligned}
\end{equation}

Combining estimates \eqref{eq:GaussInside} and \eqref{eq:GaussOutside}, we can deduce that, 
\begin{equation*}
\left|(\tilde{G}*f)(a) - \int_{\substack{\|z\| \ge n^{-\delta_1}\\ \|z-a\| \le n^{\delta_2}}} f(a-z) \tilde{G}(z) \text{d}z\right| \lesssim  n^{-\delta_1}. 
\end{equation*}

Now, we compute the difference between the quantity on the right hand side above, and the appropriate discretization. If we let $\lfloor z\rfloor_n$ denote the point in the lattice $\frac{1}{\sqrt{n}} \mathbb{Z}^4$ that is closest to $z$, then we can observe the following,
\begin{equation*}
 |\tilde{G}(z) - \tilde{G}(\lfloor z \rfloor_n)| \lesssim  n^{\delta_1 -1/2} \tilde{G}(z), 
 \quad \forall \|z\| \ge n^{-\delta_1}.
\end{equation*}
This comes from the fact that the gradient of $\|z\|^{-3}$ is $3\|z\|^{-4}[z_1,\ldots,z_4]$ which equals $3 G(z) \|z\|^{-2}[z_1,\ldots,z_4]$  and that $\|z\|^{-1} \le n^{\delta_1}$. 

In addition, if we assume that the domain of the support of $f$ is $I$, 
\begin{equation*}
|f(a-z) - f(a-\lfloor z \rfloor_n)| \lesssim n^{-1/2} \mathbbm{1}[ a-z \in I]
\end{equation*}
since $f$ is a smooth function with a bounded derivative.
Hence, applying the triangle inequality, we ultimately see that,
\begin{equation*}
\begin{aligned}
 &\bigg|\int_{\substack{\|z\| \ge n^{-\delta_1}\\ \|a-z\| \le n^{\delta_2}}} f(a-z) \tilde{G}(z) \text{d}z - \sum_{\substack{z\in \frac{1}{\sqrt{n}} \mathbb{Z}^4\\ \|z\| \ge n^{-\delta_1}\\ \|a-z\| \le n^{\delta_2}}} f(\lfloor a \rfloor_n- z ) \tilde{G}( z ) \bigg| \\&  \lesssim \int_{\substack{\|z\| \ge n^{-\delta_1}\\ \|a-z\| \le n^{\delta_2}}} |f(a-z) \tilde{G}(z) - f(\lfloor a \rfloor_n- \lfloor z \rfloor_n) \tilde{G}(\lfloor z \rfloor_n) | \text{d}z\\& \lesssim \max[n^{\delta_1 - 1/2}, n^{- 1/2}] \int_{a-z \in I} \tilde{G}(z) \text{d}z \lesssim n^{\delta_1-1/2}.
\end{aligned}
\end{equation*}
This completes the proof of the lemma.
\end{proof}

As a corollary of the lemma, we have the following estimates.
\begin{cor} \label{cor:tildGeps}
    First, fix some $\epsilon$ not changing with $n$. Additionally, fix parameters $\delta_1 > \delta_2$ sufficiently small.  For $\|a\| \le 2 n^{\delta_2}$, we have the following estimate,
    \begin{equation*}
        \left|2 \tilde{G}*f(a) - n^{3/2} (C_f)^{-1/2}\tilde{G}_{D}*f(\lfloor \sqrt{n}a \rfloor) \right| \lesssim \frac{n^{20\delta_1 + 2\delta_2}}{n} + n^{-\delta_1}. 
    \end{equation*}
\end{cor}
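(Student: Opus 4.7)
The plan is to chain together the truncation and discretization reductions already established and then close the argument with a quantitative local central limit theorem. First I would apply Lemma~\ref{lem:chi} together with its discrete analogue (stated in the remark immediately after it) to restrict both $\tilde{G}*f(a)$ and the discrete convolution $n^{3/2}\tilde{G}_D*f(\lfloor\sqrt{n}a\rfloor)$ to the annular region $\{e:\|\lfloor a\rfloor_n-e\|\le n^{\delta_2}\}$, each at the cost of an $O(n^{-\delta_1})$ error. Lemma~\ref{lem:contGeps} also lets me replace $\tilde{G}*f(a)$ by $\tilde{G}*f(\lfloor a\rfloor_n)$ with error $O(n^{-1/2})$, which is absorbed into $n^{-\delta_1}$ provided $\delta_1\le 1/2$.

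Next I would invoke equations~\eqref{eq:tildGdisc} and~\eqref{eq:tildGDdisc} to pass from the truncated integrals (resp.\ the truncated discrete convolutions) to a common lattice sum over $z\in\frac{1}{\sqrt{n}}\mathbb{Z}^4$ with $n^{-\delta_1}\le\|z\|$ and $\|\lfloor a\rfloor_n-z\|\le n^{\delta_2}$, again incurring an $O(n^{-\delta_1})$ error. After these reductions the problem collapses to bounding
\begin{equation*}
\Bigg|\frac{1}{n^2}\sum_{\substack{z\in\frac{1}{\sqrt{n}}\mathbb{Z}^4\\ n^{-\delta_1}\le\|z\|\le n^{\delta_2}}} f(\lfloor a\rfloor_n-z)\,\bigl[\,2\tilde{G}(z)-n^{3/2}\tilde{G}_D(\sqrt{n}z)\,\bigr]\Bigg|,
\end{equation*}
a lattice sum with a common kernel difference and the same cutoff.

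The heart of the proof is a pointwise comparison of the form
\begin{equation*}
\bigl|\,2\tilde{G}(z)-n^{3/2}\tilde{G}_D(\sqrt{n}z)\,\bigr|\;\lesssim\; \tilde{G}(z)\cdot n^{-1+c\delta_1}\qquad(\|z\|\ge n^{-\delta_1}),
\end{equation*}
obtained from a local central limit theorem in Fourier space: one has $\widehat{\tilde{G}}(\xi)\propto|\xi|^{-1}$ and, by a Taylor expansion of the characteristic function of one random walk step, $\widehat{\tilde{G}_D}(\xi)\propto (1-\hat p(\xi))^{-1/2}$ which, after the $\sqrt n$ rescaling, agrees with $\widehat{\tilde{G}}$ up to a lower-order correction. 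The factor $2$ comes from the covariance $\tfrac14 I$ of a single SRW step. Using $\tilde{G}(z)\lesssim\|z\|^{-3}$ near zero together with $\|f\|_\infty<\infty$ and the volume bound $O(n^{2+4\delta_2})$ on the number of lattice points, one gets a total contribution of order $n^{c'\delta_1+c''\delta_2}/n$, which is absorbed into $n^{20\delta_1+2\delta_2}/n$.

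The main obstacle will be producing the pointwise local central limit comparison with uniform control in the worst region $\|z\|\approx n^{-\delta_1}$: because $\tilde{G}$ is only defined implicitly as the convolutional square root of $G$, one must derive pointwise asymptotics and the desired $n^{-1+c\delta_1}$ rate through a careful Fourier analysis (splitting the characteristic function into low- and high-frequency regimes, and controlling the square root of an analytic symbol). The generous exponent $20\delta_1$ on the right-hand side of the stated bound reflects that this Fourier step can be handled with fairly wasteful estimates, so no fine optimization is needed once the local CLT input is in place.
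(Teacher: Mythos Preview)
Your reduction is exactly what the paper does: use \eqref{eq:tildGdisc} and \eqref{eq:tildGDdisc} to collapse both sides to a common lattice sum over $z\in\frac{1}{\sqrt n}\mathbb{Z}^4$ with $\|z\|\ge n^{-\delta_1}$ and $\|\lfloor a\rfloor_n-z\|\le n^{\delta_2}$, then bound the kernel difference $2\tilde{G}(z)-n^{3/2}\tilde{G}_D(\sqrt n z)$ pointwise. The paper's proof of the corollary is just this reduction followed by a citation of \eqref{eq:diffGDG}, then summing $\frac{1}{n^2}\sum \frac{n^{20\delta_1}}{n\|z\|^2}$ over the annulus to get $n^{20\delta_1+2\delta_2}/n$.

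The only divergence is in how you propose to obtain the pointwise comparison. The paper does \emph{not} go through Fourier space; instead, it proves \eqref{eq:diffGDG} in a separate lemma via the explicit real-space representations
\[
\tilde{G}(z)=\int_0^\infty \frac{1}{\sqrt{\pi t}}P_t(z)\,\mathrm{d}t,\qquad \tilde{G}_D(z)=\sum_{k\ge 0}\frac{(2k)!}{2^{2k}(k!)^2}\,p_k(z),
\]
then discretizes the integral, applies Stirling to the coefficients, and uses the ordinary (real-space) local CLT term by term. This avoids the issue you flag yourself: the continuous symbol $|\xi|^{-1}$ is not in $L^1(\mathbb{R}^4)$ at infinity, so a direct Fourier inversion argument for $\tilde{G}$ requires extra care, whereas the series representation sidesteps it entirely. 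Your Fourier route could presumably be made to work, but it is strictly more delicate here; since \eqref{eq:diffGDG} is already available in the paper, you can simply cite it and skip the ``main obstacle'' paragraph altogether.
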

\begin{proof}
By using \eqref{eq:tildGdisc} and \eqref{eq:tildGDdisc}, it suffices to estimate,
\begin{equation*}
    \Bigg| \frac{1}{n^{2}} \sum_{\substack{z \in \frac{1}{\sqrt{n}} \mathbb{Z}^4 \\ \|z\| \ge n^{-\delta_1}\\ \|a - z\| \le n^{\delta_2}}} f(a-z) \left[ n^{3/2} (C_f)^{-1/2}\tilde{G}_D(\sqrt{n}z) - 2 \tilde{G}(z) \right]\Bigg|.
\end{equation*}
From equation \eqref{eq:diffGDG}, which we can apply since if $\|a\| \le n^{\delta_2}$, then $\|z\| \le 2 n^{\delta_2}$ in the sum above, we can bound the quantity above as,
\begin{equation*}
\begin{aligned}
    &\lesssim \Bigg| \frac{1}{n^{2}} \sum_{\substack{z \in \frac{1}{\sqrt{n}} \mathbb{Z}^4\\ \|z\| \ge n^{-\delta_1} \\ \|a-z\| \le n^{\delta_2}}} f(a-z) \frac{n^{20 \delta_1}}{n\|z\|^2} \Bigg| \lesssim \Bigg| \frac{1}{n^{2}} \sum_{\substack{z \in \frac{1}{\sqrt{n}} \mathbb{Z}^4\\ \|z\| \ge n^{-\delta_1} \\ \|a-z\| \le 2n^{\delta_2}}} \frac{n^{20\delta_1}}{n\|z\|^2} \Bigg|\\
    & \lesssim n^{20\delta_1} \int_{ 3n^{\delta_2} \ge \|z\| \ge n^{-\delta_1}} \frac{1}{n\|z\|^2} \text{d}z \lesssim \frac{n^{20\delta_1 +2\delta_2}}{n}. 
\end{aligned}
\end{equation*}
In our application of equation \eqref{eq:diffGDG}, we made the choice of parameter $\epsilon = 8 \delta_1$.  
\end{proof}

We finally have all results necessary to prove Lemma \ref{lem:Hestimate}.

\begin{proof}[Proof of Lemma \ref{lem:Hestimate}]
Fix parameters $\delta_1,\delta_2,\delta_3>0$ sufficiently small satisfying $\frac{1}{400}>\delta_1 > 20\delta_2 > 20 \delta_3 >0$.  
Recalling the function $\chi$ from Lemma \ref{lem:chi}, we set $\chi$ to a be a smooth function supported on the interval $[-n^{\delta_2},n^{\delta_2}]^4$ and $1$ on $[-n^{\delta_3},n^{\delta_3}]^4$, 
\begin{equation*}
\begin{aligned}
    &\Bigg| 2 \int_{\mathbb{R}^4} \chi(a-e)\tilde{G} (a-e) f(e)\text{d}e  \\ &- \frac{1}{n^{2}}\sum_{e \in \frac{1}{\sqrt{n}} \mathbb{Z}^4} \chi(\lfloor a \rfloor_n-e) n^{3/2}\tilde{G}_{D}(\sqrt{n} \lfloor a\rfloor_n - \sqrt{n}e)  f(e) \Bigg| = o(1).
\end{aligned}
\end{equation*}
Thus, we see that it suffices to show that,
\begin{equation*}
\begin{aligned}
    & \int_{\mathbb{R}^4} \chi(a-e)\left| 2 \tilde{G}(a-e) -n^{3/2} \tilde{G}_{D}(\sqrt{n}(\lfloor a \rfloor_n -e)) \right| f(e) \text{d}e  = o(1).
\end{aligned}
\end{equation*}
By Corollary \ref{cor:tildGeps}, we can bound the difference of $\tilde{G}$ and $\tilde{G}_{D}$ in the region on which $\chi$ is not equal to $0$. 
Thus, we have,
\begin{equation*}
    \begin{aligned}
        &\int_{\mathbb{R}^4} \chi(a-e)\left| 2 \tilde{G}(a-e) -n^{3/2} \tilde{G}_{D}(\sqrt{n}(\lfloor a \rfloor_n -e)) \right| f(e) \text{d}e\\& 
        \lesssim \int_{\mathbb{R}^4} \chi(a-e) n^{-\delta_1} f(e) \text{d}e 
        \lesssim n^{- \delta_1 }. 
    \end{aligned}
\end{equation*}
We used the fact that $\chi$ is supported on $[-n^{\delta_2},n^{\delta_2}]$. 
This completes the proof of the lemma.

\end{proof}

\subsection{Additional Green's function computations}
In this subsection, we will give various useful estimates concerning Green's function. 
\begin{lem} \label{lem:posdisc}
The Green's function of the discrete random walk $G_D(x)$ has a positive convolutional square root with the following form,
\begin{equation*}
    \tilde{G}_D(z) = \sum_{n=0}^{\infty} \frac{(2n)!}{2^{2n}(n!)^2} p_n(z).
\end{equation*}
Recall that $p_n(z)$ is the transition probability that a simple random walk starting from $0$ reaches the point $z$ at time $n$. 
There is an  $L^1$ function $\hat{\tilde{G}}_D(l)$ whose Fourier transform is the function $\tilde{G}_D(x)$. 
\end{lem}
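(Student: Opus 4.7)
The plan is to verify three separate claims: convergence of the series with positivity, the convolution identity $\tilde{G}_D * \tilde{G}_D = G_D$, and existence of the $L^1$ Fourier preimage. I will set $a_n := \binom{2n}{n}/2^{2n}$ throughout and use the generating-function identity
\[
\sum_{n=0}^\infty a_n x^n = (1-x)^{-1/2}, \qquad |x|<1,
\]
as the main algebraic input.

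First I will handle convergence and positivity. By Stirling, $a_n \sim (\pi n)^{-1/2}$, and the local CLT in $d=4$ gives $p_n(z) \lesssim n^{-2}$ uniformly in $z$. Hence $\sum_n a_n p_n(z)$ converges absolutely for every $z \in \mathbb{Z}^4$, and since each summand is nonnegative, $\tilde{G}_D(z) \ge 0$.

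Next, for the convolution identity, I will square the generating function: $(1-x)^{-1/2} \cdot (1-x)^{-1/2} = (1-x)^{-1}$ gives the Cauchy-product identity $\sum_{k+\ell=m} a_k a_\ell = 1$ for every $m \ge 0$. Using $p_k * p_\ell = p_{k+\ell}$ and Fubini (legitimate by nonnegativity), I compute
\[
(\tilde{G}_D * \tilde{G}_D)(z) = \sum_{y \in \mathbb{Z}^4} \sum_{k,\ell \ge 0} a_k a_\ell \, p_k(y) p_\ell(z-y) = \sum_{m \ge 0} \Bigl(\sum_{k+\ell=m} a_k a_\ell\Bigr) p_m(z) = \sum_{m \ge 0} p_m(z) = G_D(z).
\]

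For the Fourier claim, let $\hat p(l) = \tfrac{1}{4}\sum_{j=1}^{4} \cos(2\pi l_j)$ denote the characteristic function of one SRW step on the torus $\mathbb{T}^4=(-1/2,1/2]^4$, so that $\widehat{p_n}(l) = \hat p(l)^n$. I will define
\[
\hat{\tilde{G}}_D(l) := \bigl(1 - \hat p(l)\bigr)^{-1/2}
\]
and check it is integrable. The only zero of $1-\hat p$ on $\mathbb{T}^4$ is $l=0$, where Taylor expansion yields $1-\hat p(l) = \tfrac{\pi^2}{2}\|l\|^2 + O(\|l\|^4)$; thus $\hat{\tilde{G}}_D(l) \sim \sqrt{2}/(\pi\|l\|)$ near the origin, and the integral $\int_{\|l\|\le \epsilon} \|l\|^{-1}\,dl$ is finite in dimension four. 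Since $\hat p(l) = -1$ can occur only at $l = (1/2,\ldots,1/2)$, where $\hat{\tilde{G}}_D = 1/\sqrt{2}$, the function is bounded away from the origin. Hence $\hat{\tilde{G}}_D \in L^1(\mathbb{T}^4)$. Termwise Fourier inversion, justified by the fact that $|\hat p(l)|<1$ for a.e.\ $l$ together with dominated convergence against the absolutely integrable majorant, gives
\[
\int_{\mathbb{T}^4} \hat{\tilde{G}}_D(l) e^{-2\pi i l \cdot z}\,dl = \sum_{n \ge 0} a_n \int_{\mathbb{T}^4} \hat p(l)^n e^{-2\pi i l \cdot z}\,dl = \sum_{n \ge 0} a_n p_n(z) = \tilde{G}_D(z),
\]
identifying $\hat{\tilde{G}}_D$ as the desired $L^1$ Fourier preimage.

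There is no serious obstacle; the only mildly delicate point is the $L^1$ integrability of $\hat{\tilde{G}}_D$ at the origin, but the singularity is exactly of order $\|l\|^{-1}$ which is integrable precisely because $d=4$. This is in fact the dimensional feature that distinguishes this construction.
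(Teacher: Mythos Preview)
Your proof is correct and follows essentially the same route as the paper: the generating-function identity $(1-x)^{-1/2}\cdot(1-x)^{-1/2}=(1-x)^{-1}$ for the convolution square root, the $\|l\|^{-1}$ singularity analysis for $L^1$-integrability in $d=4$, and termwise Fourier inversion. One small note: the dominated-convergence majorant you actually need is $(1-|\hat p(l)|)^{-1/2}$, not $\hat{\tilde G}_D$ itself, and this has a second (equally integrable) singularity at $l=(1/2,\ldots,1/2)$ that you should also check; the paper instead justifies the interchange via the Cauchy--Schwarz bound $\int_{\mathbb{T}^4} |\hat p(l)|^k\,dl \le \sqrt{p_{2k}(0)} \lesssim k^{-1}$.
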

\begin{proof}

\textit{Part 1: Derivation of the form of $\tilde{G}_D$}

Consider the Taylor expansion of $(1-x)^{-1/2}$ as,
\begin{equation*}
    \frac{1}{\sqrt{1-x}} = \sum_{k=0}^{\infty} C_k x^k.
\end{equation*}
We will show that $\tilde{G}_D$ has to take the functional form,
\begin{equation*}
\tilde{G}_D(z) = \sum_{k=0}^{\infty} C_k p_k(z).
\end{equation*} 
We can check this by directly computing $\tilde{G}_D*\tilde{G}_D$. 
Thus, we have that, for any $ z \in \mathbb{Z}^4$, 
\begin{equation*}
\begin{aligned}
\tilde{G}_D *\tilde{G}_D(z)&
=  \sum_{x \in \mathbb{Z}^4}\sum_{k_1,k_2=0}^{\infty} C_{k_1} C_{k_2} p_{k_1}(x) p_{k_2}(z-x)
\\
&= \sum_{k_1,k_2=0}^{\infty} C_{k_1} C_{k_2} p_{k_1+k_2}(z)\\
&=  \sum_{k=0}^{\infty} p_k(z) \sum_{k_1=0}^{k} C_{k_1} C_{k - k_1}
= \sum_{k=0}^{\infty} p_k(z).
\end{aligned}
\end{equation*}
To get the last line, we used the fact that $$
\frac{1}{1-x} = \left(\frac{1}{\sqrt{1-x}} \right)^2 = \left(\sum_{k=0}^{\infty} C_k x^k\right)^2 = \sum_{k=0}^{\infty} x^k \sum_{k_1=0}^k C_{k_1} C_{k-k_1}.
$$
This gives the identity that $\sum_{k_1=0}^k C_{k_1} C_{k-k_1} = 1$ by comparing coefficients of the Taylor Series. 
By using similar manipulations, one can show that $\tilde{G}(x) = \int_{0}^{\infty} \frac{1}{\sqrt{\pi t} } P_t(x) \text{d}t$, 
where $P_t(x)$ is the probability density that a Brownian motion starting from zero would reach position $x$ at time $t$.

\textit{Part 2: Derivation of the Fourier Transform}

Now, we discuss the Fourier transform of $\tilde{G}_D(x)$.
Consider the following function,
$$
F(l_1,\ldots,l_4) =  \frac{1}{\sqrt{1 - \frac{1}{4}\sum_{i=1}^4 \cos(2\pi l_i)}}.
$$
We will show that,
\begin{equation*}
 \tilde{G}_D(a_1,\ldots,a_4) = \int_{(-1/2,1/2]^4} F(l_1,\ldots,l_4) \prod_{i=1}^4 \exp[-2 \pi \text{i} l_i a_i]  \text{d}l_i. 
\end{equation*}

First of all, observe that $F(l_1,\ldots,l_4)$ only has a singularity around the origin and, furthermore, around the origin, $F$ behaves like $\frac{1}{\sqrt{l_1^2+\ldots + l_4^2}}$. 
Thus, $F$ is integrable around $0$.  If we let $B_{\epsilon}(x)$ be the ball of radius $\epsilon$ around $x$ , we have,
\begin{equation} \label{eq:expandF}
\begin{aligned}
&\int_{(-1/2,1/2]^4} F(l_1,\ldots,l_4) \prod_{i=1}^4 \exp[-2 \pi \text{i} l_i a_i]  \text{d}l_i \\ &= \lim_{\epsilon \to 0 }\int_{(-1/2,1/2]^4 \setminus B_{\epsilon}(0)} \frac{1}{\sqrt{1-\frac{1}{4} \sum_{i=1}^4 \cos(2 \pi l_i)}} \prod_{i=1}^4 \exp[-2 \pi \text{i} l_i a_i]  \text{d}l_i.
\end{aligned}
\end{equation}

Now, away from the singularity at $0$, we can expand $\frac{1}{\sqrt{1- \frac{1}{4} \sum_{i=1}^4 \cos(2 \pi l_i)}}$ as,
\begin{equation*}
    \sum_{k=0}^\infty C_k \left( \frac{1}{4} \sum_{i=1}^4 \cos(2\pi l_i)\right)^k
\end{equation*}
and  observe that,
\begin{equation*}
\begin{aligned}
    &\int_{(-1/2,1/2]^4} \left( \frac{1}{4} \sum_{i=1}^4 \cos(2\pi l_i)\right)^k \prod_{i=1}^4 \text{d} l_i \\
    \le & \left[\int_{(-1/2,1/2]^4} \left( \frac{1}{4}  \sum_{i=1}^4 \cos(2\pi l_i)\right)^{2k} \prod_{i=1}^4 \text{d} l_i \right]^{1/2}
    = \sqrt{p_{2k}(0)},
    \end{aligned}
\end{equation*}
where the last equality comes from direct integration. 
By using the asymptotic that $C_k \lesssim \sqrt{k}^{-1}$ and $p_{2k}(0) \lesssim  k^{-2}$. 
We see that,
\begin{equation*}
    \sum_{k=0}^{\infty} C_k \int_{(-1/2,1/2]^4} \left| \frac{1}{4}  \sum_{i=1}^4 \cos(2 \pi l_i)\right|^k \prod_{i=1}^4 \text{d} l_i \lesssim  \sum_{k=1}^{\infty} k^{-1/2 - 1} < \infty.
\end{equation*}
This control on the absolute value of the integral allows us to freely exchange the summation of the power series, the limit as $\epsilon \to 0$, and the integration in \eqref{eq:expandF}. Thus, we have that,
\begin{equation*}
\begin{aligned}
    & \lim_{\epsilon \to 0} \int_{(-1/2,1/2]^4\setminus B_{\epsilon}(0)} \sum_{k=0}^{\infty} C_k \left( \frac{1}{4} \sum_{i=1}^4 \cos(2\pi l_i) \right)^k \prod_{i=1}^4 \exp[-2\pi \text{i} l_i a_i]  \text{d}l_i\\
    &= \sum_{k=0}^{\infty} C_k \lim_{\epsilon \to 0} \int_{(-1/2,1/2]^4 \setminus B_{\epsilon}(0)} \left( \frac{1}{4} \sum_{i=1}^4 \cos(2\pi l_i) \right)^k \prod_{i=1}^4 \exp[-2\pi \text{i} l_i a_i]  \text{d}l_i\\
    &= \sum_{k=0}^{\infty} C_k \int_{(-1/2,1/2]^4} \left( \frac{1}{4} \sum_{i=1}^4 \cos(2\pi l_i) \right)^k \prod_{i=1}^4 \exp[-2\pi \text{i} l_i a_i] \text{d}l_i = \sum_{k=0}^{\infty} C_k p_k(a_1,\ldots,a_4). 
\end{aligned}
\end{equation*}
To get the last line, observe that $\frac{1}{4} \sum_{i=1}^4 \cos(2\pi l_i)$ can be written as,
$$
\frac{1}{4} \sum_{i=1}^4 \cos(2\pi l_i) = \frac{1}{8}\left(\sum_{i=1}^4[\exp[2\pi l_i] + \exp[-2 \pi l_i]] \right).
$$
The Fourier integral in the last line thus determines the coefficient of the term $\prod_{i=1}^4 \exp[2 \pi \text{i} l_i a_i]$ in the expansion of the polynomial. This is exactly the number of ways that a simple random walk will reach the point $(a_1,\ldots,a_4)$ at time $k$.

\end{proof}

Though, this will be more useful in the sequel, we also present the following result comparing $\tilde{G}_D$ to $\tilde{G}$. 
\begin{lem}
We have the following asymptotics relating $\tilde{G}_D(z)$ with $\tilde{G}(z)$.
Fix some $\epsilon>0$ and let $ \|z\| \ge n^{-\epsilon/4}$. Then, we have the following comparison, 
\begin{equation} \label{eq:diffGDG}
|(\sqrt{n})^3\tilde{G}_D(\sqrt{n} z) - 2\tilde{G}(z)| \lesssim \frac{n^{5\epsilon/2}}{n\|z\|^2} + n^{2} \exp[-n^{\epsilon/2}].
\end{equation}
\end{lem}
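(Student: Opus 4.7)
The plan is to compare the series representation $\tilde{G}_D(w) = \sum_{k=0}^{\infty} C_k p_k(w)$ from Lemma~\ref{lem:posdisc} with the integral representation $\tilde{G}(x) = \int_0^\infty (\pi t)^{-1/2} P_t(x)\,\mathrm{d}t$ term by term, using the local CLT to replace the discrete transition probabilities with the Gaussian heat kernel, and then a Riemann-sum comparison of the resulting sum with the integral. Here $C_k = \binom{2k}{k}/4^k$, and Stirling gives $C_k = (\pi k)^{-1/2}(1 + O(k^{-1}))$.

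\textbf{Step 1 (Truncation of small $k$).} Split both the sum and the integral at $k_0 := n^{1-\epsilon}$ (equivalently $t_0 := n^{-\epsilon}$). Since $\|z\| \ge n^{-\epsilon/4}$, one has $n\|z\|^2/k \ge n^{\epsilon/2}$ on $k \le k_0$, so the Gaussian bound in \eqref{gre2} gives $p_k(\sqrt n z) \lesssim k^{-2} e^{-2n^{\epsilon/2}}$ plus a subleading piece of size $k^{-2}(n\|z\|^2)^{-1}$. Summing the Gaussian piece against $C_k \lesssim k^{-1/2}$ and multiplying by $(\sqrt n)^3$ yields the $n^2 e^{-n^{\epsilon/2}}$ term in the statement. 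The same truncation applied to $\int_0^{t_0} (\pi t)^{-1/2} P_t(z)\,\mathrm{d}t$ gives a piece of the same size by direct Gaussian integration. The subleading piece from \eqref{gre2} is comparable in order to the bulk contribution and will be absorbed into the error in Step 3.

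\textbf{Step 2 (LCLT on the bulk).} For $k > k_0$, apply the refined local CLT from \cite[Theorem 2.3.11]{LL10} to obtain
\begin{equation*}
 p_k(\sqrt n\, z) = n^{-2} P_{k/n}(z)\bigl(1 + O(k^{-1+\epsilon'})\bigr) + O(k^{-3}),
\end{equation*}
uniformly in $z$, where $P_t$ is the heat kernel of the Brownian motion matching the SRW covariance $\Gamma = \tfrac{1}{4} I$. Substituting $t = k/n$, writing $C_k = (\pi k)^{-1/2}(1 + O(k^{-1}))$, and interpreting $n^{-1}\sum_{k > k_0}$ as a Riemann sum of mesh $1/n$ for the corresponding integral, this gives
\begin{equation*}
 (\sqrt n)^3 \sum_{k > k_0} C_k p_k(\sqrt n\, z) = 2\int_{t_0}^\infty (\pi t)^{-1/2} P_t(z)\,\mathrm{d}t + \mathcal{R}(n,z),
\end{equation*}
where the factor of $2$ arises from the Brownian normalisation relative to the paper's $\tilde G$. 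The remainder $\mathcal R(n,z)$ decomposes into: (i) the LCLT multiplicative error $O(k^{-1+\epsilon'})$ summed against the main term; (ii) the LCLT additive error $O(k^{-3})$ summed against $C_k$; and (iii) the Riemann-sum discretisation error, bounded via $|\partial_t[(\pi t)^{-1/2} P_t(z)]| \lesssim t^{-1}(\pi t)^{-1/2} P_t(z)$.

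\textbf{Step 3 (Error accounting).} By the scaling identity $\int_{t_0}^\infty t^{-\alpha}(\pi t)^{-1/2} P_t(z)\,\mathrm{d}t \lesssim \|z\|^{-2\alpha - 3}$ (obtained by the change of variables $t \mapsto \|z\|^2 s$), each of the three error sources in $\mathcal R(n,z)$ contributes at most $n^{-1+O(\epsilon)}\|z\|^{-2}$, matching the claimed $n^{5\epsilon/2}/(n\|z\|^2)$ bound after tracking the $\epsilon'$-losses and the ratio $n\|z\|^2 \ge n^{1-\epsilon/2}$ against the cutoff $k_0$.

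The main obstacle will be the bookkeeping in Step 3: the Riemann-sum error involves derivatives of $(\pi t)^{-1/2}P_t(z)$ that blow up as $t \to 0$, so the split $k_0$ must be chosen precisely — too small and the Gaussian derivative at the cutoff dominates, too large and the truncation loss from Step 1 dominates. A secondary technicality is the parity/lattice correction in the dimension-four LCLT, which produces an oscillatory remainder that one verifies is absorbed into the $n^{O(\epsilon)}$ slack in the multiplicative error.
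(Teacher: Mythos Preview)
Your proposal is correct and follows essentially the same route as the paper: truncate the sum/integral at $k_0=n^{1-\epsilon}$ (equivalently $t_0=n^{-\epsilon}$), use Stirling on $C_k$ and the local CLT on $p_k(\sqrt n\,z)$ in the bulk, and then compare the resulting sum to the integral via a Riemann-sum error controlled by $\int_0^\infty |\partial_t[(\pi t)^{-1/2}P_t(z)]|\,\mathrm{d}t$. The paper organises the two halves symmetrically (first discretising $\tilde G$, then simplifying $\tilde G_D$, and matching in the middle), whereas you push the LCLT error and the Riemann-sum error together in Step~3, but the ingredients and the final accounting are the same.
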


\begin{rem}
The bound found in the inequality \eqref{eq:diffGDG} is most effective when $\|z\| \le n^{\epsilon}$, which will be the regime in which we will actually apply the bound in question. 
\end{rem}

\begin{proof}
\textit{Part 1: Discretization of the integral form of $\tilde{G}$}

We will begin our computation by first finding an appropriate discretization of the integral form of $\tilde{G}$. 
Recall that we can write $\tilde{G}$ as,
\begin{equation*}
    \tilde{G}(z) = \int_0^{\infty} \frac{1}{\sqrt{\pi t}} P_{t}(z) \text{d}t.  
\end{equation*}
Let $Q_z(t)$ be a shorthand for the function $Q_z(t) = \frac{1}{\sqrt{\pi t}} P_t(z)$. 
We can estimate the difference as follows: 
\begin{align}\notag
\left|\int_0^\infty Q_z(t) \text{d}t - \sum_{k \in \frac{1}{n} \mathbb{Z}^+} Q_z(k)\right|
&\le \sum_{k \in \frac{1}{n} \mathbb{Z}^+} \int_{\frac{(k-1)}{n}}^{\frac{k}{n}} \text{d} j\int_{j}^{\frac{k}{n}} |Q_z'(l)| \text{d}l \\
\label{greenint1}
& = \sum_{k \in \frac{1}{n} \mathbb{Z}^+} \int_{\frac{(k-1)}{n}}^{\frac{k}{n}} \left( l - \frac{k-1}{n}  \right) |Q_z'(l)| \text{d}l
\le \frac{1}{n} \int_0^{\infty} |Q_z'(l)| \text{d}l.
\end{align}
One can explicitly compute $|Q_z'(l)|$ as $Q_z'(l) \propto \exp[-\|z\|^2/(2l) ]\left[-5 l^{-7/2} +  \|z\|^2 l^{-9/2}  \right]$. 
By scaling, we observe that $\int_0^{\infty}|Q_z'(l)| \text{d} l = \frac{1}{\|z\|^{5}} \int_0^{\infty}|Q_{e_1}'(l)| \text{d} l $, where $e_1$ is the unit vector in the first dimension and the latter integral is finite. 
Thus, the error between $\int_0^{\infty} Q_z(t) \text{d}t$  and its discretization with lattice $\frac{1}{n}\mathbbm{Z}^+$ is of order $O\left(\frac{1}{n\|z\|^{5}}\right) $, where the implicit constant does not depend on either $\|z\|$ or $n$.

Furthermore, we claim that we can ignore the portion of the integral of $Q_z(t)$ from $t$ between $0$ and $n^{-\epsilon}$ in our regime of interest. By observing the form of the derivative of $Q_z(t)$, we notice that $Q_z(l)$ is an increasing function as long as $\|z\|^2 \ge 5 l$. For, $l \le n^{-\epsilon}$ and $\|z\| \ge n^{-\epsilon/4}$, we see that $Q_z(l)$ is increasing between $l=0$ and $l= n^{-\epsilon}$. 
Thus, 
\begin{align}\label{greenint2}
    \int_0^{n^{-\epsilon}} Q_z(l) \text{d}l \le n^{-\epsilon} Q_z(n^{-\epsilon}) \lesssim n^{3\epsilon/2} \exp[-n^{\epsilon/2}].
\end{align} 
Combining \eqref{greenint1} and \eqref{greenint2}, we see that,
\begin{equation} \label{eq:Gdiscaway}
    \left|\tilde{G}(z) -  \frac{1}{n} \sum_{k \in \frac{1}{n}\mathbb{Z}^+, k \ge n^{-\epsilon}} \frac{1}{\sqrt{\pi t}} P_t(z)\right| \lesssim \frac{1}{n\|z\|^{5}} + n^{3\epsilon/2}\exp[-n^{\epsilon/2}]. 
\end{equation}

\textit{Part 2: Estimates on $\tilde{G}_D$}

First, we will bound the contribution of $n\sum_{k=0}^{n^{1-\epsilon}} C_k p_k(\sqrt{n} z)$. 
Since $\|z\| \ge n^{-\epsilon/4}$, we have that $\sqrt{n}\|z\| \ge n^{1/2- \epsilon/4}$. 
By exponential tail estimates on discrete random walks, we know that $p_k(\sqrt{n}z) \lesssim \exp[- n \|z\|^2/k] \lesssim \exp[-n^{\epsilon/2} ]$. 
Thus, the contribution of $n^{2} \sum_{k=0}^{n^{1-\epsilon}}C_k p_k(n^{1/2}z) \lesssim n^{2} \exp[-n^{\epsilon/2}]$.  
Ultimately, we see that,
\begin{equation} \label{eq:GDawayorig}
    \left| n^{3/2} \tilde{G}_D(n^{1/2}z) - \frac{1}{n} \sum_{k \in \frac{1}{n} \mathbb{Z}^+, k \ge n^{-\epsilon}} (\sqrt{n} C_{nk}) (n^{2} p_{nk} (\sqrt{n}z) )\right| \lesssim n^{2} \exp[-n^{\epsilon/2}].
\end{equation}

By Stirling's approximation, we have,
$$
    \begin{aligned}
         &C_{nk}=\frac{(2nk)!}{2^{2nk}((nk)!)^2} \le \frac{\sqrt{2\pi(2nk)} \left(\frac{2nk}{e}\right)^{2nk} \exp[\frac{1}{12(2nk)}]}{ 2^{2nk}\left(\sqrt{2\pi(nk)} \left( \frac{nk}{e}\right)^{nk} \right)^2 \exp[\frac{2}{12nk+1}]} = \frac{1}{\sqrt{\pi nk}} \left[ 1+ \frac{O(1)}{nk} \right],\\
         &C_{nk}=\frac{(2nk)!}{2^{2nk}((nk)!)^2} \ge \frac{\sqrt{2\pi(2nk)} \left(\frac{2nk}{e}\right)^{2nk} \exp[\frac{1}{12(2nk)+1}]}{ 2^{2nk}\left(\sqrt{2\pi(nk)} \left( \frac{nk}{e}\right)^{nk} \right)^2 \exp[\frac{2}{12nk}]} = \frac{1}{\sqrt{\pi nk}} \left[ 1+ \frac{O(1)}{nk} \right].
    \end{aligned}
$$
Thus, we see that,
\begin{equation*}
    \left|\sqrt{n} C_{nk} - \frac{1}{\sqrt{\pi k}}\right| \lesssim \frac{1}{n k^{3/2}}.
\end{equation*}
By the local central limit theorem \cite[Thm 2.1.1]{LL10}, we also have that,

\begin{equation*}
|n^{2} p_{nk}(\sqrt{n}z) -  n^{2}P_{nk/4}(\sqrt{n}z)| \lesssim \frac{1}{n k^3\|z\|^2}.
\end{equation*}
Furthermore, by scaling, $n^2 P_{nk/4}(\sqrt{n}z) $ is equal to $ 16 P_{k}( 2 z)$. 
If we combine these estimates, we see that,
\begin{equation*}
\begin{aligned}
    & \left|\frac{1}{n} \sum_{k \in \frac{1}{n} \mathbb{Z}^+, k \ge n^{-\epsilon}} (\sqrt{n} C_{nk}) (n^{2} p_{nk}(\sqrt{n}z)) - \frac{1}{n} \sum_{k \in \frac{1}{n} \mathbb{Z}^+, k \ge n^{-\epsilon}} \frac{16}{\sqrt{\pi k}} P_{k}(2 z)\right| \\
    \lesssim  &\frac{1}{n} \sum_{k \in \frac{1}{n} \mathbb{Z}^+, k \ge n^{-\epsilon}} (\sqrt{n}C_{nk})|n^{2} p_{nk}(\sqrt{n}z) - 16 P_k(2 z)| \\
    + &\frac{1}{n} \sum_{k \in \frac{1}{n} \mathbb{Z}^+, k \ge n^{-\epsilon}} \left|\sqrt{n}C_{nk} - \frac{1}{\sqrt{\pi k}}\right| 16 P_{k}(2 z)\\
     \lesssim  & \frac{1}{n} \sum_{k \in \frac{1}{n} \mathbb{Z}^+, k \ge n^{-\epsilon}} \frac{1}{\sqrt{ k}} \frac{1}{n k^{3} \|z\|^2} 
    \lesssim \frac{n^{5\epsilon/2}}{n\|z\|^2}. 
\end{aligned}
\end{equation*}
In the last line, we used the estimates $\sqrt{n}C_{nk} \lesssim \frac{1}{\sqrt{k}}$ and $P_k(z) \lesssim \frac{1}{k\|z\|^2}$. This, in itself, comes from the estimate that $\exp[-\|z\|^2/k] \le k/\|z\|^2$. 
Combining this with equation \eqref{eq:GDawayorig} shows that,
$$
\left|n^{3/2} \tilde{G}_D(\sqrt{n}z)) -  \frac{1}{n} \sum_{k \in \frac{1}{n} \mathbb{Z}^+, k \ge n^{-\epsilon}} \frac{1}{\sqrt{\pi k}} 16 P_{k}(2 z)\right| \lesssim  \frac{n^{5\epsilon/2}}{n\|z\|^2} + n^{2}\exp[-n^{\epsilon/2}]. 
$$
Finally, combining this estimate with equation \eqref{eq:Gdiscaway} will give us the desired inequality in equation \eqref{eq:diffGDG}.

\end{proof}

The following lemma gives a rough estimate on sum of the Green's function over a random walk whose beginning and end are pinned to certain points.

\begin{lem}\label{lem:BridgeBound}
For any $y$ and $z\in\Z^4$, 
    \begin{equation*}
\begin{aligned}
\mathbb{E}\left[ \sum_{i=0}^n G_D(\mathcal{S}_i - z)\bigg| \mathcal{S}_n = y \right]
\lesssim \log n.
\end{aligned}
 \end{equation*}
\end{lem}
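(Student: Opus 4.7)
The natural strategy is a direct computation using the bridge representation. By the Markov property,
\[
\mathbb{P}(\mathcal{S}_i = w \mid \mathcal{S}_n = y) = \frac{p_i(w)\,p_{n-i}(y-w)}{p_n(y)},
\]
so the conditional expectation equals
\[
\frac{1}{p_n(y)}\sum_{w \in \mathbb{Z}^4} G_D(w-z)\sum_{i=0}^n p_i(w)\,p_{n-i}(y-w).
\]
The plan is to bound the inner $i$-sum cleanly, then perform the $w$-sum via a Green's-function convolution estimate, and finally control the prefactor $1/p_n(y)$ by the local CLT.

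For the inner sum I would split at $i = n/2$. Using the uniform heat-kernel bound $p_j(x) \lesssim (j \vee 1)^{-2}$ from \eqref{gre2}, one has $p_{n-i}(y-w) \lesssim n^{-2}$ for $i \le n/2$ and symmetrically $p_i(w) \lesssim n^{-2}$ for $i > n/2$. Summing the remaining factor using $\sum_{j \ge 0} p_j(\cdot) = G_D(\cdot)$ yields
\[
\sum_{i=0}^n p_i(w)\,p_{n-i}(y-w) \;\lesssim\; n^{-2}\bigl(G_D(w) + G_D(y-w)\bigr),
\]
so that
\[
\mathbb{E}\Bigl[\sum_{i=0}^n G_D(\mathcal{S}_i - z)\,\Big|\,\mathcal{S}_n = y\Bigr] \;\lesssim\; \frac{1}{n^2 p_n(y)}\,\sum_{w}G_D(w-z)\bigl(G_D(w) + G_D(y-w)\bigr).
\]

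The next step is the $d=4$ convolution bound
\[
\sum_{\substack{w \in \mathbb{Z}^4 \\ |w| \le n}} G_D(w-z)\,G_D(w) \;\lesssim\; \log n,
\]
valid uniformly in $z$. This is the critical-dimension estimate: since $G_D(\cdot) \asymp |\cdot|^{-2}$, the integrand behaves like $|w|^{-4}$ at infinity, and the integral $\int_{|u| \le n} |u|^{-4}\,du \asymp \log n$ is logarithmically divergent at the cutoff scale. The restriction $|w| \le n$ is automatic because $p_i(w)=0$ for $|w|>i \le n$. The same bound holds for $\sum_w G_D(w-z)G_D(y-w)$ by translation. Combined with the lower bound $p_n(y) \gtrsim n^{-2}$ from the local CLT (in the Gaussian window $|y| \lesssim \sqrt{n}$, where the factor $1/(n^2 p_n(y))$ is $O(1)$), this completes the proof in the typical regime.

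The main obstacle is uniformity in $y$ across \emph{all} scales, since for $|y|$ in the moderate- or large-deviation regime $p_n(y)$ decays (poly)exponentially and the crude prefactor $1/(n^2 p_n(y))$ blows up. This is where one must be more careful: for such $y$, the conditioned walk concentrates in a tube around the straight segment $\{t y/n : 0 \le i \le n\}$, and a pathwise estimate along this nearly-ballistic trajectory shows that $\sum_{i=0}^n G_D(\mathcal{S}_i - z)$ is actually $O(1)$, comfortably smaller than $\log n$. Making this precise—via a Doob $h$-transform representation of the bridge as an inhomogeneous Markov chain with drift, and comparing the bridge Green kernel to the Green kernel of the drifted walk—is the technical heart of the proof. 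A simpler alternative, which may suffice for the application in \eqref{eq:Greenbnd}, is to restrict the statement to $|y| \le C\sqrt{n\log n}$ (outside which $p_n(y)$ is negligible for the averaging that follows) and use the Gaussian lower bound $p_n(y) \gtrsim n^{-2} \exp(-C|y|^2/n)$, absorbing the tame exponential loss into the $\log n$ bound.
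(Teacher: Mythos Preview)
Your bridge computation for the typical regime $|y|\lesssim \sqrt n$ is correct and is essentially the engine of the paper's argument as well. The gap is in your handling of atypical $y$. Your ``simpler alternative'' fails outright: for $|y|\sim \sqrt{n\log n}$ the Gaussian lower bound gives $1/(n^2 p_n(y))\sim \exp(c\log n)=n^c$, a polynomial loss that is not absorbed into $\log n$. The Doob $h$-transform route is plausible but is left as a sketch, and carrying it out uniformly in $y$ would be substantial additional work.

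The paper sidesteps the uniformity problem with a localization trick you did not find. Introduce $\tau_1$ and $\tau_2$, the first and last times the bridge enters the ball $B_{\sqrt n}(z)$. For $i\notin[\tau_1,\tau_2]$ one has $\|\mathcal S_i-z\|\ge \sqrt n$ pathwise, hence $G_D(\mathcal S_i-z)\lesssim 1/n$ and that portion of the sum is $O(1)$ uniformly in $y$. On $[\tau_1,\tau_2]$, condition further on the entry and exit points $a_1,a_2\in B_{\sqrt n}(z)$ and the times $k_1,k_2$: the walk on $[k_1,k_2]$ is now a bridge between two points at distance at most $2\sqrt n$ from each other, and the original endpoint $y$ has disappeared from the problem. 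One then runs exactly your split-at-the-midpoint heat-kernel computation on this inner bridge, where the local CLT lower bound on the transition kernel is available. The point is that the only place $G_D(\cdot-z)$ can be large is near $z$, so localizing there first converts the endpoint-uniformity issue into a one-line deterministic bound on the exterior, replacing your proposed tube/drift machinery with a two-line geometric observation.
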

\begin{proof}
We let $B_{\sqrt{n}}(z)$ be the ball of radius $\sqrt{n}$ around the bound $z$.
 Let $\tau_1$ be the (random variable) that is the first time that the random bridge touches a point in $B_{\sqrt{n}}(z)$. Let $\tau_2$ be the last time that the random bridge touches a point in $B_{\sqrt{n}}(z)$.

 We see that,
 \begin{equation*}
\begin{aligned}
&\mathbb{E}\left[ \sum_{i=0}^n G_D(\mathcal{S}_i - z)\bigg| \mathcal{S}_n = y \right]\\
&=\mathbb{E}\bigg[ \sum_{ 0 \le k_1 \le k_2 \le n} \sum_{a_1,a_2 \in B_{\sqrt{n}}(z)}  \mathbbm{1}[\tau_1 = k_1, \tau_2 = k_2, \mathcal{S}_{\tau_1} =a_1, S_{\tau_2} = a_2]\\ &  \hspace{ 1 cm} \times\mathbb{E}\left[ \sum_{i=0}^n G_D(\mathcal{S}_i - z) \bigg| \tau_1 = k_1, \tau_2 = k_2,  \mathcal{S}_{\tau_1} = a_1, \mathcal{S}_{\tau_2}= a_2, \mathcal{S}_n = y\right]\bigg| \mathcal{S}_n = y \bigg] \\
&= \mathbb{E}\bigg[ \sum_{ 0 \le k_1 \le k_2 \le n} \sum_{a_1,a_2 \in B_{\sqrt{n}}(z)}  \mathbbm{1}[\tau_1 = k_1, \tau_2 = k_2, \mathcal{S}_{\tau_1} =a_1, \mathcal{S}_{\tau_2} = a_2]\\& \hspace{1 cm}\times \mathbb{E}\left[ \sum_{i=k_1}^{k_2} G_D(\mathcal{S}_i - z)\bigg| \tau_1 = k_1, \tau_2 = k_2,  \mathcal{S}_{\tau_1} = a_1, \mathcal{S}_{\tau_2}= a_2,\mathcal{S}_n = y\right] \bigg|\mathcal{S}_n = y \bigg] \\
&+\mathbb{E}\bigg[ \sum_{ 0 \le k_1 \le k_2 \le n} \sum_{a_1,a_2 \in B_{\sqrt{n}}(z)}  \mathbbm{1}[\tau_1 = k_1, \tau_2 = k_2, \mathcal{S}_{\tau_1} =a_1, \mathcal{S}_{\tau_2} = a_2,\mathcal{S}_n =y]\\ & \hspace{1 cm}\times\mathbb{E}\left[ \sum_{i=0}^{k_1} G_D(\mathcal{S}_i - z)\bigg| \tau_1 = k_1, \tau_2 = k_2,  \mathcal{S}_{\tau_1} = a_1, \mathcal{S}_{\tau_2}= a_2,\mathcal{S}_n = y \right] \bigg|\mathcal{S}_n = y \bigg]\\
&+ \mathbb{E}\bigg[ \sum_{ 0 \le k_1 \le k_2 \le n} \sum_{a_1,a_2 \in B_{\sqrt{n}}(z)}  \mathbbm{1}[\tau_1 = k_1, \tau_2 = k_2, \mathcal{S}_{\tau_1} =a_1, \mathcal{S}_{\tau_2} = a_2] \\ & \hspace{1cm} \times\mathbb{E}\left[ \sum_{i=k_2}^{n} G_D(\mathcal{S}_i - z)| \tau_1 = k_1, \tau_2 = k_2,  \mathcal{S}_{\tau_1} = a_1, \mathcal{S}_{\tau_2}= a_2,\mathcal{S}_n = y\right] \bigg|\mathcal{S}_n = y \bigg].
\end{aligned}
 \end{equation*}

 For the last two summands, we can make the following observation.
Since we have that $G_D(\mathcal{S}_i-z) \lesssim \frac{1}{n}$ for $i \le \tau_1$ and $i \ge \tau_2$. Thus, $$\sum_{i=0}^{\tau_1} G_D(\mathcal{S}_i-z) \lesssim n \frac{1}{n}=1, $$ and $$\sum_{i=\tau_2}^n G_D(\mathcal{S}_i-z) \lesssim 1.$$
Hence,
\begin{equation*}
    \begin{aligned}
&\mathbb{E}\bigg[ \sum_{ 0 \le k_1 \le k_2 \le n} \sum_{a_1,a_2 \in B_{\sqrt{n}}(z)}  \mathbbm{1}[\tau_1 = k_1, \tau_2 = k_2, \mathcal{S}_{\tau_1} =a_1, \mathcal{S}_{\tau_2} = a_2]\\ & \hspace{1 cm}\times\mathbb{E}\left[ \sum_{i=0}^{k_1} G_D(\mathcal{S}_i - z)| \tau_1 = k_1, \tau_2 = k_2,  \mathcal{S}_{\tau_1} = a_1, \mathcal{S}_{\tau_2}= a_2,\mathcal{S}_n = y\right] \bigg|\mathcal{S}_n = y \bigg]\\
& \le \mathbb{E}\bigg[ \sum_{ 0 \le k_1 \le k_2 \le n} \sum_{a_1,a_2 \in B_{\sqrt{n}}(z)}  \mathbbm{1}[\tau_1 = k_1, \tau_2 = k_2, \mathcal{S}_{\tau_1} =a_1, \mathcal{S}_{\tau_2} = a_2] \bigg| \mathcal{S}_n = y\bigg] \lesssim 1.
    \end{aligned}
\end{equation*}

Now, all that is left to check is that,
\begin{equation*}
\begin{aligned}
& \mathbb{E}\left[ \sum_{i=k_1}^{k_2} G_D(\mathcal{S}_i - z)\bigg| \tau_1 = k_1, \tau_2 = k_2,  \mathcal{S}_{\tau_1} = a_1, \mathcal{S}_{\tau_2}= a_2,\mathcal{S}_n = y\right]\\
& \le \mathbb{E}\left[ \sum_{i=k_1}^{k_2} G_D(\mathcal{S}_i - z)\bigg|   \mathcal{S}_{k_1} = a_1, \mathcal{S}_{k_2}= a_2\right] \lesssim \log n.
\end{aligned}
\end{equation*}
It suffices to find a bound on the following for general $T$ and a random walk $\mathcal{S}$:
\begin{equation*}
\mathbb{E}\left[  \sum_{i=1}^T G_D(\mathcal{S}_i) 
\bigg| \mathcal{S}_0 =x,  \mathcal{S}_T=y \right]\lesssim \log T.
\end{equation*}
Recall \cite[Thm. 1.2.1]{LA91} yields that for some $C$ finite and any $\|x\| \le i^{1/2}$,
\begin{align*}
\mathbb{P}(\mathcal{S}_i=x)\gtrsim i^{-2}. 
\end{align*}
Then, with \eqref{gre2}, if $\|x-y\|\le T^{1/2}$, 
\begin{equation*}
\mathbb{P}( \mathcal{S}_0 =x,  \mathcal{S}_T=y )
\gtrsim T^{-2}\exp(-\frac{\|x-y\|^2}{T}) \gtrsim T^{-2}
\end{equation*}
and 
\begin{align*}
&\mathbb{E}\left[  \sum_{i=1}^T G_D(\mathcal{S}_i) 
\mathbbm{1} \{ \mathcal{S}_0 =x, \mathcal{S}_T=y \}
\right]\\
= & \sum_{i=0}^{T/2} \sum_{z \in \Z^4}
G_D(z) \mathbb{P}^x(\mathcal{S}_i=z)\mathbb{P}^z(\mathcal{S}_{T-i}=y)
+\sum_{i=T/2}^T \sum_{z \in \Z^4}
G_D(z) \mathbb{P}^z(\mathcal{S}_i=x)\mathbb{P}^y(\mathcal{S}_{T-i}=z)\\
\lesssim & \sum_{i=0}^{T/2} \sum_{z \in \Z^4}
G_D(z) \mathbb{P}^x(\mathcal{S}_i=z)  (T-i)_+^{-2}
+\sum_{i=T/2}^{T} \sum_{z \in \Z^4}
G_D(z) \mathbb{P}^y(\mathcal{S}_{T-i}=z)  i_+^{-2}\\
= & \sum_{i=0}^{T/2} \mathbb{E}[G_D(\mathcal{S}_i)]  (T-i)_+^{-2}
+\sum_{i=T/2}^T \mathbb{E}[G_D(\mathcal{S}_{T-i})]  i_+^{-2}\\
\lesssim & \sum_{i=0}^{T/2}  i_+^{-1} (T-i)_+^{-2} 
+\sum_{i=T/2}^T  (T-i)_+^{-1} i_+^{-2} 
\lesssim \frac{\log T}{T^2}.
\end{align*}
Therefore, we have the result. 
\end{proof}

\begin{lem} \label{clm:matinv}
Recall the matrix $\mathcal{G}^{S^2_{\beta,\alpha,j}}$ from equation \eqref{eq:defcalG}. This matrix $\mathcal{G}^{S^2_{\beta,\alpha,j}}$ is positive definite and has minimum eigenvalue greater than $\frac{1}{2}$. 
\end{lem}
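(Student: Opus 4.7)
The plan is to transfer the quadratic form of $\mathcal{G}^{\mathcal{S}^2_{\beta,j}}$ to the Fourier side, where the lower bound becomes a pointwise statement about the symbol $\hat{G}_D$. Take any vector $v = (v_a)_{a \in \mathcal{S}^2_{\beta,j}}$ and extend it by zero to a finitely supported function on $\mathbb{Z}^4$. Let $\hat{v}(\lambda) := \sum_{a \in \mathbb{Z}^4} v_a e^{2\pi i \lambda \cdot a}$ denote its discrete Fourier transform on the torus $(-1/2,1/2]^4$. Since $G_D = \sum_{n \ge 0} p_n$ where $p_n$ is the $n$-th convolution power of the single-step distribution, one computes (as in the proof of Lemma \ref{lem:posdisc}) that
\begin{equation*}
\hat{G}_D(\lambda) \;=\; \frac{1}{1 - \hat{p}(\lambda)}, \qquad \hat{p}(\lambda) = \frac{1}{4}\sum_{i=1}^4 \cos(2\pi \lambda_i).
\end{equation*}

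Next I will apply Parseval's identity to rewrite the quadratic form as
\begin{equation*}
v^T \mathcal{G}^{\mathcal{S}^2_{\beta,j}} v \;=\; \sum_{a,b \in \mathbb{Z}^4} v_a v_b \, G_D(a-b) \;=\; \int_{(-1/2,1/2]^4} |\hat{v}(\lambda)|^2 \, \hat{G}_D(\lambda) \, d\lambda.
\end{equation*}
The key pointwise bound is that $\hat{p}(\lambda) \in [-1,1]$ for all $\lambda$, so $1 - \hat{p}(\lambda) \in [0,2]$, giving $\hat{G}_D(\lambda) \ge \tfrac{1}{2}$ everywhere on the torus. Plugging this in and applying Parseval a second time yields
\begin{equation*}
v^T \mathcal{G}^{\mathcal{S}^2_{\beta,j}} v \;\ge\; \frac{1}{2} \int_{(-1/2,1/2]^4} |\hat{v}(\lambda)|^2 d\lambda \;=\; \frac{1}{2} \|v\|_{\ell^2}^2,
\end{equation*}
which immediately gives positive definiteness and the asserted lower bound of $1/2$ on the minimum eigenvalue.

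The one technical point that I would verify carefully is that the integral on the right is well-defined. Near $\lambda = 0$, $\hat{p}(\lambda) = 1 - c\|\lambda\|^2 + O(\|\lambda\|^4)$, so $\hat{G}_D(\lambda) \sim c'\|\lambda\|^{-2}$. Since $v$ has finite support, $\hat{v}$ is smooth and in particular bounded on the torus, so $|\hat{v}(\lambda)|^2 \hat{G}_D(\lambda)$ has an integrable singularity at the origin in dimension $d=4$ (this is the transience of the simple random walk in $d\ge 3$ and is the same integrability used throughout the paper). Away from the origin $\hat{G}_D$ is continuous, so there is no further obstruction. This is the only place where dimension enters; the algebraic identity $\hat{G}_D = (1-\hat{p})^{-1}$ and the bound $\hat{G}_D \ge 1/2$ are dimension-free.
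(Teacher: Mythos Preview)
Your proof is correct and follows essentially the same route as the paper's own argument: both pass to the Fourier side, use the identity $\hat{G}_D(\lambda)=(1-\hat{p}(\lambda))^{-1}$, apply the pointwise bound $\hat{G}_D\ge \tfrac{1}{2}$, and finish with Parseval. Your added remark about integrability of $|\hat v|^2\hat G_D$ near the origin is a nice extra justification that the paper leaves implicit.
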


\begin{proof}
We will show this proposition for any general matrix of the form,
$$
[\mathcal{G}]_{i,j} = G_D(a_i- a_j),
$$
where $\{a_i\}$ is a collection of $n$ distinct points.
Note that we have the Fourier transformation,
$$
G_D(x)= \int_{[0,1]^4} \frac{1}{1- \frac{1}{4}\sum_{i=1}^4 \cos(2\pi k_i)} \exp[2 \pi \text{i} \langle k, x \rangle] \text{d} k,
$$
where $\langle k, x\rangle$ is the inner product between the vector $k$ and $x$.
Let $(v_1,\ldots,v_n)$ be any vector with $l^2$ norm $1$.
Thus, we have,
\begin{equation*}
\begin{aligned}
\sum_{i,j} v_i [\mathcal{G}]_{i,j} \overline{v_j} & =  \int_{[0,1]^4} \frac{\left| \sum_{i=1}^n v_i \exp[2 \pi \text{i} \langle k, a_i \rangle] \right|^2}{1 -\frac{1}{4}\sum_{i=1}^4 \cos(2\pi k_i) } \text{d}k
\\ &\ge \frac{1}{2} \int_{[0,1]^4} \left| \sum_{i=1}^n v_i \exp[2 \pi \text{i} \langle k, a_i \rangle] \right|^2 \text{d}k = \frac{1}{2} \|v\|^2.
\end{aligned}
\end{equation*}
This shows that any matrix of the form $\mathcal{G}$.
\end{proof}



\subsection{Generalized Gagliardo-Nirenberg constant}

 In our previous manuscript \cite{AO}, we showed that the large deviation constant associated to the quantity $\int_0^1 \int_0^1 G(B^1_t- B^2_s) \text{d}s \text{d}t$ can be associated to the optimal constant of the generalized Gagliardo-Nirenberg inequality. Namely,
\begin{rem}\label{ggnfi}
We have
\begin{equation*}
\lim_{T \to \infty}
T^{-1}\log \mathbb{P}\left(\int_0^1 \int_0^1 G(B^1_t - B^2_s) \text{d}t\text{d}s  \ge T \right)
=-\tilde{\kappa}^{-4}(4,2).
\end{equation*}
\end{rem}

We remark that this large deviation constant was also obtained by Bass-Chen-Rosen in \cite[(1.10)]{BCR2}. Their result is not presented in the same manner, since they do not identify the generalized Gagliardo-Nirenberg inequality. Some manipulations, based on  Section 4 of \cite{AO} and Section 7 of \cite{BCR2}, can demonstrate the link between these constants. We remark that in order to adapt the results of \cite{BCR2} to the case of the Brownian motion, one has to adjust the Fourier transform appearing in \cite[equation (1.1)]{BCR2} by a factor of $1/2$. 
\end{appendix}
%
%

\begin{acks}[Acknowledgments]
The authors would like to thank Amir Dembo for his useful suggestions. 
The authors are also grateful to Makoto Nakamura for his helpful comments.
\end{acks}
\begin{funding}
 The first author was supported by NSF grant DMS 2102842.

 The second author was supported by JSPS KAKENHI Grant-in-Aid  for Early-Career Scientists (No.~JP20K14329) 
\end{funding}


\begin{thebibliography}{99}





\bibitem{AO} Adhikari A. and Okada I. Deviations of the intersection of Brownian Motions in dimension four with general kernel. arXiv:2304.12101

\bibitem{As1} Asselah A. and Schapira B. Deviations for the capacity of the range of a random walk. Electron. J. Probab. 25 (2020), Paper No.154, 28 pp.











\bibitem{As3}  Asselah A., Schapira B. and Sousi P.  Capacity of the range of random walk on $\Z^d$. Trans. Amer. Math. Soc. 370 (2018), no. 11, 7627-7645.


\bibitem{As5} Asselah A., Schapira B. and Sousi P. Capacity of the range of random walk on $\Z^4$. Ann. Probab. 47 (2019), 1447-1497.





\bibitem{BCR06} Bass R. F., Chen, X. and  Rosen J. Moderate deviations and laws of the iterated logarithm for the renormalized self-intersection local times of planar random walks. Electron. J. Probab. 11 (2006), no. 37, 993-1030. 

\bibitem{BCR} Bass R. F., Chen, X. and  Rosen J. Moderate deviations for the range of planar random walks. Mem. Amer. Math. Soc. 198 (2009), no. 929, viii+82 pp

\bibitem{BCR2} Bass R. F., Chen, X. and  Rosen J. Large deviations for Riesz potentials of additive processes. Ann. Inst. Henri Poincar\'{e} Probab. Stat.45(2009), no.3, 626-666.

\bibitem{BK02} Bass R. F. and  Kumagai T. Laws of the iterated logarithm for the range of random walks in two and three dimensions. Ann. Probab. 30 (2002), no. 3, 1369-1396. 


\bibitem{Brydge} Brydges D.C. and Spencer T. 
Self-avoiding random walk and the renormalisation group. In
Applications of field theory to statistical mechanics (Sitges, 1984), volume 216 of Lecture Notes in Phys., pages 189-198. Springer, Berlin, 1985

\bibitem{Ch17} Chang Y. Two observations on the capacity of the range of simple random walk on $\Z^3$ and $\Z^4$. Electron. Com. Probab. 22 (2017), No. 25, 9 pp.




\bibitem{Chen05} Chen, X. Moderate deviations and law of the iterated logarithm for intersections of the ranges of random walks. Ann. Probab. 33 (2005), no. 3, 1014-1059.

\bibitem{Chenbook} Chen, X. Random Walk Intersections:Large Deviations and Related Topics, American Mathematical Society,  Providence, Rhode Island (2010)







\bibitem{DO} Dembo A. and Okada I. Capacity of the range of random walk: The law of the iterated logarithm. to appear in Ann. Probab.

\bibitem{Dup} Duplantier B. and  Kwon K.-H. Conformal invariance and intersections of random walks. Phys.

\bibitem{FY15} Feng B. and Yuan X. On the Cauchy-Problem for the Schrodinger-Hartree equation. Evolution Equations and Control Theory 4(2015), no. 4, 431-445. 






\bibitem{JO68} Jain N. C. and Orey S. On the range of random walk. Israel J. Math. 6 (1968), 373-380.



\bibitem{LA91} Lawler G. F. Intersections of random walks. Second edition, Birkhauser, 1996.




\bibitem{LL10} Lawler G. F. and Limic V.  Random walk: a modern introduction. Cambridge University Press. 





 
\bibitem{LG86} Le Gall J.-F. (1986). Propriétés d’intersection des marches aléatoires. I. Convergence vers le temps local d’intersection. Comm. Math. Phys. 104 471–507.


\bibitem{MS} Moroz, V. and Van Schaftingen, J. Groundstates of nonlinear Choquard equations: existence, qualitative properties and decay asymptotics. 
J. Funct. Anal. 265 (2013), no. 2, 153-184.







\bibitem{Scy} Symanzik K. Euclidean quantum field theory. In Local Quantum Theory. R. Jost, editor.




\end{thebibliography}
\end{document}